\newcommand{\R}{\mathbb R}
\newcommand{\N}{\mathbb N}
\newcommand{\cH}{\mathcal{H}}
\newcommand{\cL}{\mathcal{L}}
\newcommand{\cM}{\mathcal{M}}
\def\xC{{\rm C}}
\def\xLip{ {\rm Lip} }
\def\xlip{ {\rm lip} }
\def\xL{{\rm L}}
\def\xW{{\rm W}}
\def\xdiv{{\rm div}}
\def\xW{{\rm W}}
\newcommand{\G}{{\mathbb G}_{d,n}}
\newcommand{\V}{\|V\|}
\newcommand{\B}{\mathbf{B}}
\newtheorem{theo}{Theorem}[section]
\newtheorem{prop}[theo]{Proposition}
\newtheorem{lemma}[theo]{Lemma}
\newtheorem*{theo*}{Theorem}
\newtheorem{dfn}[theo]{Definition}
\theoremstyle{remark}
\newtheorem{remk}[theo]{Remark}
\DeclareMathOperator*{\supp}{spt}
\DeclareMathOperator*{\dist}{dist}
\DeclareMathOperator*{\tr}{tr}
\definecolor{grey}{rgb}{.7,.7,.7}
\renewcommand{\phi}{\varphi}
\renewcommand{\epsilon}{\varepsilon}
\newcommand{\e}{\epsilon}
\renewcommand{\G}{G_{d,n}}
\title{Weak and approximate curvatures of a measure:\\ a varifold perspective}
\author{Blanche Buet}
\address{Laboratoire de Math\'ematiques d'Orsay, Univ. Paris-Sud, CNRS, Universit\'e Paris-Saclay, F-91405 Orsay, France}
\email{blanche.buet@u-psud.fr}
\author{Gian Paolo Leonardi}
\address{Dipartimento di Matematica, Universit\`a di Trento, Italy}
\email{gianpaolo.leonardi@unitn.it}
\author{Simon Masnou}
\address{Univ Lyon, Universit\'e Claude Bernard Lyon 1, CNRS UMR 5208, Institut Camille Jordan, F-69622 Villeurbanne, France}
\email{masnou@math.univ-lyon1.fr}
\keywords{Varifold, regularized variations, approximate second fundamental form, point cloud}
\thanks{G.P.~Leonardi has been supported by GNAMPA-INdAM. B.~Buet and S.~Masnou acknowledge support from the French National Research Agency (ANR) under grants ANR-12-BS01-0014-01 (project GEOMETRYA) and ANR-14-CE27-001 (project MIRIAM). S.~Masnou acknowledges support from the European Union Horizon 2020 research and innovation programmes under the Marie Skodowska-Curie grant agreement No 777826 NoMADS. Part of this work was also supported by the LABEX MILYON (ANR-10-LABX-0070) of Universit\'e de Lyon, within the program "Investissements d'Avenir" (ANR-11-IDEX- 0007) operated by the French National Research Agency (ANR), by the Department of Physics, Informatics and Mathematics of the University of Modena and Reggio Emilia, and by the JCJC 2018 PEPS of INSMI, CNRS (project: "A unified framework for surface approximation through varifolds").}
\subjclass[2010]{Primary: 49Q15. Secondary: 68U05, 65D18}
\numberwithin{equation}{section}
\begin{document}

\begin{abstract}
By revisiting the notion of generalized second fundamental form originally introduced by Hutchinson for a special class of integral varifolds, we define a \textit{weak curvature tensor} that is particularly well-suited for being extended to general varifolds of any dimension and codimension through regularization. 
The resulting \textit{approximate second fundamental forms} are defined not only for piecewise-smooth surfaces, but also for datasets of very general type (like, e.g., point clouds). We obtain explicitly computable formulas for both weak and approximate curvature tensors, we exhibit structural properties and prove convergence results, and lastly we provide some numerical tests on point clouds that confirm the generality and effectiveness of our approach.
\end{abstract}

     \maketitle


\section*{Introduction}

The aim of this paper is to present a new approach for the computation of second-order extrinsic properties (curvatures) for a very general class of geometric objects, including both smooth or piecewise smooth $d$-submanifolds and discrete datasets in the Euclidean $n$-space. The proposed framework and toolbox rely on a suitable extension of the classical theory of \textit{varifolds}, as we shall describe later on. 

The generality and potentialities of our method can be mostly appreciated on unstructured datasets, like point clouds, for which the determination of curvatures is a very important but extremely delicate task. Indeed, the need of efficient and robust techniques for the analysis of geometric features of general datasets is today of primary importance, due to the variety of both data sources and applications of data analysis, in particular in high dimensions. Among the most common techniques used to measure curvatures of unstructured data, we mention the Moving Least Squares (MLS) algorithm, that is based on the reconstruction of a local, implicit surface from the dataset, from which first- and second-order differential properties can be recovered (see \cite{Amenta2004,levin1998,Yang2007}). Other quite popular methods rely on integral geometry, and in particular on the connection between curvatures and the local expansion of volumes and of covariance-type quantities (see \cite{Coeurjolly2014} and \cite{Merigot2009}). These methods can be used as well for estimating the curvatures of structured data, typically triangle meshes, a problem which has motivated a large number of contributions. An exhaustive description of this literature is far beyond the scope of this paper, the interested reader may refer to~\cite{najman2017modern} and the references therein. In comparison with the various approaches that have been proposed so far for defining and estimating curvatures, we believe that our framework combines two nice properties: it can handle a large category of unsmooth and unstructured data with good consistency and convergence properties, and it is very well suited for numerical purposes. This latter property is a major advantage over other weak notions of curvatures, as those developed in metric measure spaces like Menger's, Wald's or Finsler-Haantjes' curvatures (see~\cite{najman2017modern}) whose numerical approximation is complicated or, at least, computationally expensive.

Our framework is based on the notion of varifold (see below), a classical tool in geometric measure theory. Surprisingly, the representation and analysis of discrete surfaces has rarely taken advantage of the tools and techniques developed in geometric measure theory. A remarkable exception is the theory of normal cycles (a generalization of unit normal bundles \cite{Wintgen1982,fu1993,zahle}) that has been recently adapted to the reconstruction of curvatures from offsets of distance-like functions associated with datasets (see \cite{thibert,morvan_cohen_steiner,morvan_book}).

\subsection*{The theory of varifolds} Varifolds represent very natural generalizations of classical $d$-surfaces, as they encode, loosely speaking, a joint distribution of mass and tangents. More technically, varifolds are Radon measures defined on the Grassmann bundle $\R^n\times \G$ whose elements are pairs $(x,S)$ specifying a position in space and an unoriented $d$-plane. Varifolds have been proposed more than 50 years ago by Almgren \cite{Almgren} as a mathematical model for soap films, bubble clusters, crystals, and grain boundaries. After Allard's fundamental work \cite{Allard72}, they have been successfully used in the context of Geometric Measure Theory, Geometric Analysis, and Calculus of Variations. Among those, Almgren--Pitts min--max theory (see \cite{DeLellisColding} for a survey) has fundamental consequences, from existence of smooth embedded minimal hypersurfaces in a given compact Riemannian manifold \cite{Pitts} to the recent proof of Willmore's conjecture \cite{CodaNeves}. Another successful application of varifolds resulted in the definition and study of a general weak mean curvature flow in \cite{brakke}, which  allowed to prove existence of mean curvature evolution with singularities in \cite{TonegawaKim}. Beyond the theory of rectifiable varifolds, the flexibility of the varifold structure has proven to be relevant to model diffuse interfaces, e.g., phase field approximations, and took for instance a crucial part in the proof of the convergence of the Allen-Cahn equation to Brakke's mean curvature flow~\cite{Ilmanen, Tone, TakaTone}, or in the proof of the $\Gamma$--convergence of Cahn-Hilliard type energies to the Willmore energy (up to an additional perimeter term)~\cite{Roger}. In all these contributions a key element is the possibility to use second-order properties of varifolds associated with either nicely diffuse or fairly well concentrated {(rectifiable)} measures. The notion of approximate curvature tensors proposed in our paper opens new perspectives for calculating second-order properties associated with \textit{any} varifold.

\par
Within the classical theory of varifolds, a generalized notion of curvature is encoded in the \textit{first variation} operator, see \cite{Allard72,Allard75,simon}.
The first variation $\delta V$ of a varifold $V$ is a vector--valued distribution of order $1$, i.e., a linear and continuous functional defined on compactly supported vector fields of class $C^1$. Given a vector field $Y$, the first variation of $V$ applied to $Y$ equals the derivative of the mass of the varifold obtained by transforming (push-forwarding) $V$ through the one-parameter flow generated by $Y$ (see Section \ref{section:pre}). Note that such a derivative, in the case of a smooth $d$-surface $M$ without boundary, can be obtained by integrating (minus) the scalar product between the mean curvature vector $H^M$ and the vector field $Y$ on $M$. 
The so--called \textit{Allard varifolds}, i.e.,  varifolds whose first variation is a Radon measure, constitute a very important class of measures. In particular, the \textit{generalized mean curvature} $H^V$ of an Allard varifold $V$ can be defined as the vector--valued density of $\delta V$ with respect to the mass (or weight) measure $\V$. We mention that, as a key ingredient of the \textit{direct method} of the Calculus of Variations, a regularity theory is available for varifolds with generalized mean curvature in $L^p$ for $p>d$ (see \cite{Allard72}). With less integrability, weak regularity results still hold: all integral varifolds with locally bounded first variation are uniformly rectifiable~\cite{Luckhaus} and $C^2$-rectifiable~\cite{Menne2013}. Let us also mention the extension of Allard's regularity theory to the anisotropic setting recently achieved in~\cite{DePhilippis}. 

More than a decade after Allard's work, a theory of \textit{curvature varifolds} was proposed by Hutchinson \cite{Hutchinson,Hutchinson2}. This theory provides a notion of \textit{generalized second fundamental form}, together with existence and regularity results for solutions to variational problems involving curvature--dependent functionals. 
As a matter of fact, if Almgren-Allard's theory of varifolds has been mostly known and used by specialists of Geometric Measure Theory, Geometric Analysis, and Calculus of Variations, it seems that an even smaller community of mathematicians has been aware of Hutchinson's theory of curvature varifolds. Among the most noticeable works that refer to this theory, or make some use of it, we recall the extension to curvature varifolds with boundary~\cite{Mantegazza} and the recent reformulation and extension~\cite{menne,MenneSharrer2018}, plus several works on the minimization of curvature-dependent energies \cite{bellettini1997variational, Mondino, kuwert2014existence}, on the relaxation of elastica and Willmore functionals \cite{bellettini2010approximation,bellettini2007varifolds}, on some applications to mechanics \cite{giaquinta2009currents}, or on geometric flows arising from cosmology \cite{huisken2001inverse}. Other closely related works are \cite{anzellotti1990curvatures, delladio2000differential, delladio1997special, delladio1995oriented} on the theory of generalized Gauss graphs, and \cite{ambrosio1998approximation} on a comparison between curvature varifolds and measures arising as limits of suitable classes of multiple--valued functions. 
 
As we already pointed out in our previous work \cite{BuetLeonardiMasnou}, the theory of varifolds has seen no substantial applications in the fields of applied mathematics. A possible explanation is that some key tools, like the first variation operator, are not directly applicable to general varifolds, and in particular to varifolds arising from discrete datasets. For example, the first variation of a point cloud varifold is not a measure, but only a distribution resulting from a directional / tangential derivative of a finite sum of weighted Dirac's deltas (see Section \ref{section:varifolds}). Therefore, the standard first variation of a point cloud varifold does not directly provide any consistent notion of (mean) curvature for that kind of dataset. 

\subsection*{Weak and approximate curvature tensors}
In \cite{BuetLeonardiMasnou} we showed how to define consistent notions of approximate mean curvature for any varifold, including those of discrete type, and we proved a series of results that opened the way for a systematic application of the (extended) theory of varifolds in the context of discrete and computational geometry. Here, our main objective is to push forward our previous work in the direction of a general theory of approximate curvature tensors, to be made available for the whole class of varifold measures.

In respect of this objective, a very natural idea would be to apply the same regularization scheme as in \cite{BuetLeonardiMasnou} to Hutchinson's generalized curvature tensor. However, this is not possible without a suitable revision of Hutchinson's theory itself. The fact that Hutchinson's generalized curvature tensor is not defined as a distribution of order $1$ (see formula \eqref{eqHutchinsonBPI}) is the main, structural obstruction to a direct application of our regularization technique. By taking a closer look to the definition of curvature tensor given by Hutchinson (see also Section~\ref{sectionHutchinsonSecondFundamentalForm}), one can easily realize that the constraints imposed on the curvature tensor by general test functions $\phi(x,S)$ -- specifically, by those that are nonlinear with respect to the Grassmannian variable $S$ -- make Hutchinson's notion extremely rigid, in the sense that every blow-up of an integral varifold admitting generalized curvatures in $L^p_{loc}(\V)$, with $p>d$, necessarily consists of a finite union of $d$-planes with multiplicities.

In order to overcome the structural obstruction mentioned above, we need to modify Hutchinson's definition of generalized curvature tensor in order to obtain less rigid, but at the same time fully consistent notions of weak curvatures. To this aim, we introduce in Section \ref{sectionGeneralizedCurvatures} a suitable family of variation operators (the \textit{$G$-linear variations}, see Definition~\ref{dfnVariations}) and obtain from them a \textit{weak second fundamental form} (WSFF, see Definition \ref{dfn:weakFundForm}). The path we follow is similar to the standard one that, starting from the first variation $\delta V$, leads to the generalized mean curvature $H^V$. Our WSFF can be considered in some sense a \textit{core distributional notion} extracted from Hutchinson's original definition. We refer the interested reader to Section \ref{sectionHutchinsonSecondFundamentalForm} for a thorough comparison between Hutchinson's tensor and our WSFF. As explained in Section \ref{sectionGeneralizedCurvatures}, a net advantage of the WSFF with respect to Hutchinson's tensor is that it only depends on the space variable $x$ and can be directly computed from the $G$-linear variations through explicit formulae. As far as we know, these formulae are new and provide the basis for the definition of explicitly computable \textit{approximate second fundamental forms}. 

Let us note that the mean curvature vector is orthogonal to the tangent plane for a smooth submanifold as well as for an integral varifold, at least almost everywhere (\cite{brakke}) and consequently there is some margin in the definition of a weak second fundamental form consistent for regular varifolds. This observation allows to define a consistent variant of the WSFF, the orthogonal weak second fundamental form denoted by WSFF$^\perp$ (see Definition~\ref{dfnBetaAijkPerp}), that both enforces the structural properties \eqref{eqStructuralProperties} (which are satisfied by the second fundamental form of any smooth $d$-surface) and gives a better rate of convergence after regularization (see Theorem~\ref{theoConvergence2}).

In Section \ref{Section:RegWSFF} we apply the regularization technique of \cite{BuetLeonardiMasnou} to the WSFF (as well as to its variant WSFF$^\perp$) and obtain corresponding notions of approximate second fundamental form for general varifolds ($\e$-WSFF or $\e$-WSFF$^\perp$, where $\e>0$ is a parameter that controls the approximation scale, see Definitions \ref{propDfnRegularizedSecondFundForm} and \ref{dfnBetaAijkPerpReg}). The convergence properties of these approximate curvature tensors are then analyzed in the last part of the section, resulting in Theorem~\ref{theoConvergence1} and \ref{theoConvergence2}. We insist that those results are \emph{convergence} and not only \emph{consistency} results, in the sense that they compare the approximate second fundamental forms of a sequence of varifolds with the second fundamental form of its weak-$\ast$ limit.

Finally, in Section \ref{sectionNumerics} we show some numerical results on point clouds that illustrate the robustness of the method. In particular, some examples of $2$-dimensional point clouds in $\R^3$ are considered. For them, we compute the approximate principal curvatures also in presence of noise and singularities. 

In conclusion, the approximate second fundamental form $\e$-WSFF (as well as its orthogonal variant $\e$-WSFF$^\perp$) constitutes a general, robust, and easily-computable tool for extracting local curvature information from very general geometric objects. Several possible applications as well as future research directions can be thus envisaged. Some of them are (1) the development of a theory of discrete geometric evolutions (e.g., the discrete mean curvature flow) for point clouds with application to either shape smoothing and segmentation, or to the study of interfaces in systems of microscopic particles (see the context in~\cite{Spohn1993}); (2) the use of curvature estimators for data analysis and classification; (3) the determination of intrinsic properties of unstructured datasets (like the density of a point cloud, or the second-order derivatives of functions in grid-free numerical methods) via curvature--related properties.

\section{Preliminaries}
\label{section:pre} 
\subsection*{Notations} 
In what follows, $\N$ and $\R$ denote, respectively, the set of natural and of real numbers. Given $n\in \N$, $n\ge 1$, $x\in \R^{n}$ and $r>0$, we denote by $(e_1, \ldots, e_n)$ the canonical basis of $\R^n$, $|x|$ the Euclidean norm of $x$ and set $B_r(x) = \left\lbrace y\in \R^{n}:\ |y-x| < r \right\rbrace$. If $B$ is an open set, writing $A\subset\subset B$ means that $A$ is a relatively compact subset of $B$. We define the $\delta$-tubular neighborhood of a set $A\subset \R^n$ as 
\[
A^\delta = \bigcup_{x \in A} B_\delta (x) = \{ y \in \R^n \, | \, d( y,A) <\delta \}.
\]
$\cL^n$ denotes the $n$--dimensional Lebesgue measure and $\omega_n = \cL^n (B_1(0))$. 
Given a metric space $(X,\delta)$ and a function $f:X\to \R$, we denote by $\xlip(f)$ the Lipschitz constant of $f$. Then, $\xLip_L (X)$ denotes the space of real--valued Lipschitz functions $f$ defined on $X$ and such that $\xlip(f)\leq L$. 
$\cM_{loc} (X)^m$ is the space of $\R^m$--valued Radon measures and $\cM (X)^m$ is the space of $\R^m$--valued finite Radon measures on the metric space $(X,\delta)$. We denote by $| \mu |$ the total variation of a measure $\mu$.

From now on, we fix $d, \, n \in \N$ with $1\leq d \leq n$. By $\Omega \subset \R^n$ we shall always denote an open set. 
The $d$--dimensional Hausdorff measure in $\R^{n}$ is denoted by $\cH^d$. We let $\G$ be the Grassmannian manifold of $d$-dimensional vector subspaces of $\R^n$. We recall that a  $d$-dimensional subspace $T$ of $\R^n$ is equivalently represented by the orthogonal projection onto $T$, denoted as $\Pi_T$ (or simply $T$ when there is no possible confusion). $\G$ is equipped with the metric $d(T,P) = \Vert \Pi_T - \Pi_P \Vert$, where $\Vert \cdot \Vert$ denotes the operator norm on the space $L(\R^{n};\R^{n})$ of linear endomorphisms of $\R^{n}$. We will usually write $\Vert T-P\|$ instead of $\Vert \Pi_T - \Pi_P \Vert$. Other norms on vectors and matrices will be used in the sequel, like the $\ell_\infty$ norm defined as $|v|_\infty = \max_j |v_j|$ when $v$ is a vector of $\R^n$, or $|M|_\infty = \max_{i,j} |M_{ij}|$ if $M$ is a matrix with real coefficients.
 
Given a continuous $\R^m$--valued function $f$ defined in $\Omega$, its support $\supp f$ is the closure in $\Omega$ of $\{ y \in \Omega \: | \: f(y) \neq 0 \}$. 
$\xC_o^0 (\Omega)$ is the closure of $\xC_c^0(\Omega)$ in $\xC^{0}(\Omega)$ with respect to the norm $\|u \|_{\infty} = \sup_{x\in \Omega}|u(x)|$. 
Given $k \in \N$, $\xC_c^k (\Omega)$ is the space of real--valued functions of class $\xC^k$ with compact support in $\Omega$. 
$\xC^{0,1}(\R)$ and $\xC^{1,1}(\R)$ denote, respectively, the space of Lipschitz functions and the space of functions of class $C^{1}$ with Lipschitz derivative on $\R$. On such spaces we shall consider the norms $\|u\|_{1,\infty} = \|u\|_{\infty}+\xlip(u)$ and $\|u\|_{2,\infty} = \|u\|_{\infty}+\|u'\|_{1,\infty}$, respectively.

Given $f \in \xC^1(\Omega)$, $X \in \xC^1(\Omega, \R^n)$ and $S \in \G$, we define for $x \in \Omega$, the $S$--gradient of $f$ at $x$ as well as the $S$--divergence of $X$ at $x$ as
\[
\nabla^S f(x) = \Pi_S \nabla f(x)  \quad \text{and} \quad \left\lbrace 
\begin{array}{l}
{\xdiv}_S X(x) = \sum_{i=1}^n DX(x) \tau_i \cdot \tau_i, \\\
(\tau_1, \ldots, \tau_d) \text{ orthonormal basis of } S \in \G
\end{array} \right. .
\]
Notice that if $M \subset \Omega$ is a $d$--submanifold, those notations are shortened in the classical ones $\nabla^M f(x) = \nabla^{T_x M} f(x)$ and ${\xdiv}_M X(x) =  {\xdiv}_{T_x M} X(x)$.
%
\subsection*{Second fundamental form: the classical case}
\label{sectionClassicalSecondFundamentalForm}

For the sake of clarity, and in order to fix further notations that will be used in the next sections, we recall the essential definitions and facts about the classical second fundamental form. In view of the generalized notions that will be introduced after, we closely follow the notation of \cite{Hutchinson}. We consider a smooth $d$-dimensional submanifold $M$ of $\R^n$ with the standard induced metric. Given $x\in M$, we denote by $P(x)$ the orthogonal projection onto the tangent space $T_xM$; such a projection is represented by the matrix $P_{ij}(x)$ with respect to the standard basis of $\R^n$. The usual covariant derivative in $\R^n$ is denoted by $D$. Assuming $x\in M$ fixed, and given a vector $V\in T_x\R^n = \R^n$, we let $V^T = P(x)\,V$ and $V^\perp = V - V^T$. 

We denote by, respectively, $TM$ and $(TM)^\perp$ the tangential and the normal bundle associated with $M$, so that we have the splitting $TM \oplus (TM)^\perp = T\R^n$. 

We can now introduce the classical, second fundamental form of $M$, as the bilinear and symmetric map $II: TM\times TM \to TM^\perp$ defined as
\[
II(U,V) = (D_{U}V)^\perp\,.
\]
For technical reasons it is convenient to consider the \textit{extended second fundamental form} of $M$, which is defined as $\B(U,V)= II(U^T,V^T)$ for all $U,V\in T\R^n$. Therefore we have that the map $B:T\R^n\times T\R^n\to T\R^n$ fully encodes the second fundamental form. We set 
\begin{equation}\label{Bijk}
B_{ij}^k =  \B(e_i,e_j) \cdot e_k \,,
\end{equation}
where $\{e_i:\ i=1,\dots,n\}$ is the canonical basis of $T\R^n\simeq \R^n$. It is easy to check that the coefficient set $\{B_{ij}^k:\ i,j,k=1,\dots,n\}$ uniquely identifies $\B$. 

We now present an equivalent way of defining the extended second fundamental form by computing tangential derivatives of the orthogonal projection $P(x)$ onto the tangent space to $M$ at $x$. More precisely, let us set 
\begin{equation}\label{Aijk}
A_{ijk}(x) =  \nabla^M P_{jk}(x) \cdot e_i
\end{equation} 
whenever $x\in M$ and $i,j,k=1,\dots,n$. It is not difficult to check that
\begin{equation}\label{eqFromBtoA}
A_{ijk} = B_{ij}^k + B_{ik}^j
\end{equation}
and, reciprocally,
\begin{equation} \label{eqFromAtoB}
B_{ij}^k = \frac 12 \left(A_{ijk} + A_{jik} - A_{kij}\right)\,,
\end{equation}
see also Proposition \ref{propWellDefined} for a proof of these identities. We note for future reference the symmetry properties $A_{ijk} = A_{ikj}$ and $B_{ij}^k = B_{ji}^k$. The symmetry of $A_{ijk}$ follows from $P_{jk} = P_{kj}$. The symmetry of $B_{ij}^k$ relies upon the well-known identity $(D_U V)^\perp = (D_V U)^\perp$, which follows from the fact that the Levi-Civita connection $D$ is torsion--free.

Let us now recall the classical divergence theorem on $M$. Given a smooth vector field $X$ with compact support on an open neighborhood of $M$, we have 
\begin{equation}\label{eq:divthmonmanifold}
\int_M \xdiv_M (PX) \, d\cH^d = \int_{\partial M} X\cdot \eta\, d\cH^{d-1}\,,
\end{equation}
where $\eta$ is the outward pointing conormal to $\partial M$. 

Let us now consider vector fields of the form $X = X_{ijk} (x) = \phi(x) P_{jk}(x) e_i$, with $\phi\in \xC_c^1(\R^n)$. Recalling that $A_{ijk}(x)  = \nabla^M P_{jk}(x) \cdot e_i$, from \eqref{eq:divthmonmanifold} we obtain the identity
\begin{align} \label{eqHutchinsonLinearP_2}
\int_M P_{jk} \nabla^M \phi \cdot e_i\, d\cH^d = -\int_M \phi\Big(A_{ijk}+ P_{jk} \sum_q  A_{qiq}   \Big)\, d\cH^d  + \int_{\partial M} \phi P_{jk} (\eta \cdot e_i)\, d\cH^{d-1}\: ,
\end{align}
for every $i, j, k = 1, \ldots, n$. It is worth noticing that 
\[
\sum_q  A_{qiq}(x) = H(x) \cdot e_i\,,
\]
where $H(x)$ denotes the mean curvature vector of $M$ at $x$. We shall see in Section \ref{sectionGeneralizedCurvatures} that formula \eqref{eqHutchinsonLinearP_2} can be used to define the curvature tensor $A = \{A_{ijk}\}$ in a weak sense.

\section{Varifolds}
\label{section:varifolds}

Here we briefly recall the main definitions and some basic facts about varifolds, and refer the interested reader to \cite{simon} for more details. 

\begin{dfn}[General $d$--varifold]\label{def:gdv}\rm 
	Let $\Omega \subset \R^n$ be an open set. A $d$--varifold in $\Omega$ is a non-negative Radon measure on $\Omega \times G_{d,n}$. 
\end{dfn}

An important class of varifolds is represented by the so-called \emph{rectifiable} varifolds.
\begin{dfn}[Rectifiable $d$--varifold]\label{def:rdv}\rm
	Given an open set $\Omega \subset \R^n$, let $M$ be a countably $d$--rectifiable set and $\theta$ be a non negative function with $\theta > 0$ $\cH^d$--almost everywhere in $M$. A rectifiable $d$--varifold $V= v(M,\theta)$ in $\Omega$ is a non-negative Radon measure on $\Omega \times G_{d,n}$ of the form $V= \theta \mathcal{H}^d_{| M} \otimes \delta_{T_x M}$ i.e.
	\[
	\int_{\Omega \times G_{d,n}} \varphi (x,T) \, dV(x,T) = \int_M \varphi (x, T_x M) \, \theta(x) \, d \mathcal{H}^d (x) \quad \forall \varphi \in \xC_c^0 (\Omega \times G_{d,n} , \mathbb{R})
	\] where $T_x M$ is the approximate tangent space at $x$ which exists $\mathcal{H}^d$--almost everywhere in $M$. The function $\theta$ is called the \emph{multiplicity} of the rectifiable varifold. If additionally $\theta(x)\in \N$ for $\mathcal{H}^d$--almost every $x\in M$, we say that $V$ is an \emph{integral} varifold.
\end{dfn}

In \cite{BuetLeonardiMasnou} we considered \textit{discrete varifolds} (i.e., varifolds that are defined by a finite set of real parameters) as a relevant class of varifolds associated with discrete geometric data. Among them, an important subclass is that of \textit{point cloud varifolds}.
\begin{dfn}[Point cloud varifold]\label{def:PCV}\rm
	Let $\{ (x_i, P_i, m_i) \}_{i=1 \ldots N} \subset \R^n$ be a finite set of triplets, where $x_i\in\R^n$, $P_i\in \G$, and $m_i\in (0,+\infty)$ for all $i$. We associate with this set of triplets the \emph{point cloud $d$--varifold}
	\[
	V^{pc} = \sum_{i=1}^N m_i \, \delta_{x_i} \otimes \delta_{P_i}\,.
	\]
\end{dfn}
Note that a point cloud varifold is a $d$--varifold even though it is not $d$--rectifiable as its support is zero-dimensional.

\begin{dfn}[Mass]\rm
	The mass of a general varifold $V$ is the positive Radon measure defined by $\Vert V \Vert (B) = V (\pi^{-1} (B))$ for every $B \subset \Omega$ Borel, with $\pi:\Omega\times G_{d,n}\to \Omega$ defined by $\pi(x,S) = x$. 
	For example, the mass of a $d$--rectifiable varifold $V=v(M,\theta)$ is the measure $\V=\theta \mathcal{H}^d_{| M}$, while the mass of a point cloud $d$--varifold is the measure $\|V^{pc}\| = \sum_{i=1}^N m_i \delta_{x_i}$.
\end{dfn}
	
The following result is proved via a standard disintegration of the measure $V$ in terms of its mass $\V$ (see for instance \cite{ambrosio}).
\begin{prop}[Young-measure representation]\label{prop:Young} Given a $d$--varifold $V$ on $\Omega$, there exists a family of probability measures $\{\nu_{x}\}_{x}$ on $\G$ defined for $\V$-almost all $x\in \Omega$,  such that $V = \V\otimes \{\nu_{x}\}_{x}$, that is,
	\[
	V(\phi) = \int_{x\in\Omega}\int_{S\in\G}\phi(x,S)\, d\nu_{x}(S)\, d\V(x)
	\]
	for all $\phi\in \xC^{0}_{c}(\Omega\times\G)$.
\end{prop}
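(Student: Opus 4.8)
The plan is to obtain the family $\{\nu_x\}_x$ by disintegrating (slicing) $V$ along the projection $\pi:\Omega\times\G\to\Omega$, $\pi(x,S)=x$, and then identifying the resulting conditional measures, which a priori live on the fibres $\{x\}\times\G$, with probability measures on $\G$.

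First I would record the two facts needed to apply the disintegration theorem. On the one hand, $\pi_{\#}V=\V$: for every Borel $B\subseteq\Omega$ one has $\pi_{\#}V(B)=V(\pi^{-1}(B))=\V(B)$, which is precisely the definition of the mass. On the other hand, $V$ is $\sigma$-finite: writing $\Omega=\bigcup_{k\in\N}K_k$ as a countable union of compact sets, each $K_k\times\G$ is compact (recall $\G$ is compact), hence $V(K_k\times\G)<+\infty$ since $V$ is a Radon measure; the same exhaustion shows $\V$ is $\sigma$-finite as well.

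Next I would apply the disintegration theorem for Radon measures (see for instance \cite{ambrosio}). Since $\Omega$ and $\Omega\times\G$ are locally compact, separable metric spaces and $\pi$ is continuous, there is a $\V$-measurable family $\{\tilde\nu_x\}_{x\in\Omega}$ of probability measures on $\Omega\times\G$, uniquely determined for $\V$-almost every $x$, such that $\tilde\nu_x$ is concentrated on the fibre $\pi^{-1}(\{x\})=\{x\}\times\G$ for $\V$-almost every $x$, and
\[
V(\phi)=\int_{\Omega}\Big(\int_{\Omega\times\G}\phi(y,S)\,d\tilde\nu_x(y,S)\Big)\,d\V(x)\qquad\text{for all }\phi\in\xC^{0}_{c}(\Omega\times\G).
\]
Because $\tilde\nu_x$ charges only $\{x\}\times\G$, the homeomorphism $S\mapsto(x,S)$ from $\G$ onto $\{x\}\times\G$ transports $\tilde\nu_x$ to a probability measure $\nu_x$ on $\G$ with $\int_{\Omega\times\G}\phi\,d\tilde\nu_x=\int_{\G}\phi(x,S)\,d\nu_x(S)$; substituting this into the identity above gives exactly $V=\V\otimes\{\nu_x\}_x$. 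The map $x\mapsto\nu_x$ inherits $\V$-measurability from $x\mapsto\tilde\nu_x$, and testing the formula against $\phi(x,S)=\psi(x)$ with $\psi\in\xC^{0}_{c}(\Omega)$ forces $\nu_x(\G)=1$ for $\V$-almost every $x$, so each $\nu_x$ is indeed a probability measure.

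The only delicate point is the applicability of the disintegration theorem, i.e.\ checking that the ambient spaces are regular enough for conditional measures to exist (local compactness and separability of the metric spaces involved suffices, and this holds since $\Omega\subseteq\R^n$ is open and $\G$ is a compact metric space), together with the harmless passage from the fibre measures $\tilde\nu_x$ on $\{x\}\times\G$ to genuine measures $\nu_x$ on $\G$; everything else is routine.
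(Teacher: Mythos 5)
Your argument is correct and is exactly the route the paper takes: the paper's ``proof'' consists of the single remark that the result follows from a standard disintegration of $V$ with respect to its mass $\V$, citing \cite{ambrosio}, and your write-up simply fills in the routine verifications (that $\pi_{\#}V=\V$, that the measures are $\sigma$-finite, and the identification of the fibre measures on $\{x\}\times\G$ with probability measures on $\G$). Nothing to object to.
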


We recall that a sequence $(\mu_i)_i$ of Radon measures defined on a locally compact metric space is said to weakly--$\ast$ converge to a Radon measure $\mu$ (in symbols, $\mu_i\xrightharpoonup[]{\: \ast \:}\mu$) if, for every $\phi\in C^0_c(\Omega)$, $\mu_i(\phi)\to \mu(\phi)$ as $i\to\infty$. 
\begin{dfn}[Convergence of varifolds]\rm
	A sequence of $d$--varifolds $(V_i)_i$ weakly--$\ast$ converges to a $d$--varifold $V$ in $\Omega$ if, for all $\varphi \in \xC_c^0 (\Omega \times G_{d,n})$,
	\[
	\langle V_i,\phi\rangle=\int_{\Omega \times \G} \phi (x,P) \, dV_i (x,P) \xrightarrow[i \to \infty]{} \langle V,\phi\rangle= \int_{\Omega \times \G} \phi(x,P) \, dV(x,P) \: .
	\]
\end{dfn}

We now recall the definition of \emph{Bounded Lipschitz distance} between two Radon measures.  It is also called \emph{flat metric} and can be seen as a modified $1$--Wasserstein distance which allows the comparison of measures with different masses (see \cite{villani_book}, \cite{piccoli_rossi}). In contrast, the $1$--Wasserstein distance between two measures with different masses is infinite.

\begin{dfn}[Bounded Lipschitz distance] \label{dfn_flat_distance}\rm 
	Being $\mu$ and $\nu$ two Radon measures on a locally compact metric space $(X,d)$, we define
	\[
	\Delta (\mu,\nu) = \sup \left\lbrace \left| \int_X \phi \, d\mu - \int_X \phi \, d\nu \right| \: : \:\phi \in \xLip_1 (X), \:  \| \phi \|_{\infty} \leq 1 \right\rbrace\,.
	\]
	It is well-known that $\Delta(\mu,\nu)$ defines a distance on the space of Radon measures on $X$, called the \emph{Bounded Lipschitz distance}.
\end{dfn}

Hereafter we introduce some special notation for the $\Delta$ distance between varifolds.
\begin{dfn} \label{dfn_localized_bounded_lipschitz_distance}\rm 
	Let $\Omega \subset \R^n$ be an open set and let $V,W$ be two $d$--varifolds on $\Omega$. For any open set $U\subset \Omega$ we define 
	\[
	\Delta_U (V,W) = \sup \left\lbrace \left| \int_{\Omega \times \G} \phi \, dV - \int_{\Omega \times \G} \phi \, dW \right| \: : \: \begin{array}{l}
	\phi \in \xLip_1 (\Omega \times \G), \:  \| \phi \|_{\infty} \leq 1 \\
	\text{ and } \supp \phi \subset U \times \G \end{array} \right\rbrace 
	\]
	and
	\[
	\Delta_U(\| V \|, \| W \|) = \sup \left\lbrace \left| \int_{\Omega } \phi \, d \| V \| - \int_{\Omega } \phi \, d \| W \| \right| \: : \: \begin{array}{l}
	\phi \in \xLip_1 (\Omega), \:  \| \phi \|_{\infty} \leq 1 \\
	\text{ and } \supp \phi \subset U  \end{array} \right\rbrace \: . 
	\]
	We shall often drop the subscript when $U  = \Omega$, that is we set 
	\[
	\Delta(V,W) = \Delta_{\Omega}(V,W)\quad\text{and}\quad \Delta(\|V\|,\|W\|) = \Delta_{\Omega}(\|V\|,\|W\|)\,,
	\] 
	thus making the dependence upon the domain implicit whenever this does not create any confusion.
\end{dfn}

The following fact is well-known (see \cite{villani_book,bogachev2007}).
\begin{prop}\label{BLDversusWeakstar}
	Let $\mu,(\mu_{i})_{i}$, $i\in \N$, be Radon measures on a locally compact and separable metric space $(X,\delta)$. Assume that $\mu(X) + \sup_{i}\mu_{i}(X) <+\infty$ and that there exists a compact set $K\subset X$ such that the supports of $\mu$ and of $\mu_{i}$ are contained in $K$ for all $i\in \N$. Then $\mu_{i}\xrightharpoonup[]{\: \ast \:}\mu$ if and only if $\Delta(\mu_{i},\mu)\to 0$ as $i\to\infty$.
\end{prop}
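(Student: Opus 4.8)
The plan is to prove the two implications in Proposition~\ref{BLDversusWeakstar} separately. The implication $\Delta(\mu_i,\mu)\to 0\ \Rightarrow\ \mu_i\xrightharpoonup[]{\: \ast \:}\mu$ is elementary and does not even use the common compact support. Fix $\phi\in C^0_c(X)$ and $\eta>0$. Since every such $\phi$ can be uniformly approximated by bounded Lipschitz functions --- for instance by the inf-convolutions $\phi_\lambda(x)=\inf_{y\in X}(\phi(y)+\lambda\,\delta(x,y))$, which are $\lambda$-Lipschitz, bounded by $\|\phi\|_\infty$, and converge uniformly to $\phi$ as $\lambda\to\infty$ because $\phi$ is bounded and uniformly continuous --- I would pick a Lipschitz $\psi$ with $\|\phi-\psi\|_\infty<\eta$ and write $\psi=c\,\tilde\psi$ with $c=\max\{1,\xlip(\psi),\|\psi\|_\infty\}$, so that $\tilde\psi\in\xLip_1(X)$ and $\|\tilde\psi\|_\infty\le 1$. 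Then
\[
|\mu_i(\phi)-\mu(\phi)|\ \le\ \eta\,\mu_i(X)+c\,\Delta(\mu_i,\mu)+\eta\,\mu(X)\,,
\]
and, setting $M:=\mu(X)+\sup_i\mu_i(X)<+\infty$, taking $\limsup_{i\to\infty}$ (recall $\Delta(\mu_i,\mu)\to 0$) and then letting $\eta\to 0$ yields $\mu_i(\phi)\to\mu(\phi)$.

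For the reverse implication I would argue by contradiction. If $\Delta(\mu_i,\mu)\not\to 0$, there exist $\epsilon_0>0$, a subsequence (not relabelled), and functions $\phi_i\in\xLip_1(X)$ with $\|\phi_i\|_\infty\le 1$ and $|\mu_i(\phi_i)-\mu(\phi_i)|\ge\epsilon_0$. Since $\supp\mu$ and every $\supp\mu_i$ are contained in the fixed compact set $K$, only the restrictions $\phi_i|_K$ matter, and these form an equi-Lipschitz, uniformly bounded family on the compact metric space $K$; by Arzel\`a--Ascoli, a further subsequence $\phi_{i_k}|_K$ converges uniformly on $K$ to some $\psi$ with $\xlip(\psi)\le 1$ and $\|\psi\|_\infty\le 1$. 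Viewing $\mu$ and the $\mu_i$ as Radon measures on $K$ --- so that weak-$\ast$ convergence may be tested against all of $C^0(K)$, which follows from the hypothesis because every $g\in C^0(K)$ extends to some $\tilde g\in C^0_c(X)$ with $\tilde g|_K=g$ (using the local compactness of $X$) --- I would then split
\[
|\mu_{i_k}(\phi_{i_k})-\mu(\phi_{i_k})|\ \le\ \bigl(\mu_{i_k}(K)+\mu(K)\bigr)\,\|\phi_{i_k}-\psi\|_{C^0(K)}+|\mu_{i_k}(\psi)-\mu(\psi)|\,.
\]
The first term vanishes as $k\to\infty$ by uniform convergence and the bound $M$, and the second vanishes by weak-$\ast$ convergence tested against the fixed function $\psi$; this contradicts $|\mu_{i_k}(\phi_{i_k})-\mu(\phi_{i_k})|\ge\epsilon_0$.

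I expect the only genuinely delicate point to be the use of $\psi$ as a legitimate weak-$\ast$ test function: a priori $\psi$ is defined and $1$-Lipschitz only on $K$, so one must either extend it to a compactly supported continuous function on $X$ (producing a suitable cutoff via the local compactness of $X$) or, equivalently and more transparently, transfer the whole problem to the compact metric space $K$, on which $C^0(K)=C^0_c(K)$. It is worth noting that both hypotheses are used in an essential way in this direction: the common compact support is what makes the Arzel\`a--Ascoli extraction possible, while the uniform total mass bound $M$ is precisely what forces the error $\bigl(\mu_{i_k}(K)+\mu(K)\bigr)\|\phi_{i_k}-\psi\|_{C^0(K)}$ to vanish.
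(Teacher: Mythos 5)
Your proof is correct. Note that the paper does not actually prove Proposition~\ref{BLDversusWeakstar}: it states it as a well-known fact and refers to \cite{villani_book,bogachev2007}, so there is no in-paper argument to compare against; what you give is essentially the standard proof from those references. Both directions check out: the forward implication via inf-convolution (or any uniform Lipschitz approximation of $\phi\in C^0_c(X)$) together with the uniform mass bound $M$, and the reverse implication by contradiction via Arzel\`a--Ascoli on the common compact support $K$. The one delicate point you flag --- that the Arzel\`a--Ascoli limit $\psi$ is a priori only defined on $K$ --- is handled adequately: either extend $\psi$ to a $1$-Lipschitz function on $X$ (McShane) and multiply by a cutoff equal to $1$ on a neighborhood of $K$ (which exists by local compactness), or work directly on $K$; since $\mu$ and all $\mu_i$ vanish outside $K$, the two viewpoints give the same integrals. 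Your closing remark correctly identifies where each hypothesis enters; one could add that separability of $X$ is only needed implicitly (e.g.\ so that supports of Radon measures are well defined), since $K$ itself is automatically a separable compact metric space.
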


Let us eventually introduce the following linear form that encodes a generalized notion of mean curvature.
\begin{dfn}[First variation of a varifold, \cite{Allard72}] The first variation of a $d$--varifold in $\Omega \subset \R^n$ is the vector--valued distribution (of order $1$) defined for any vector field $X\in {\xC}_c^1 (\Omega , \R^n )$ as
	\[
	\delta V(X) = \int_{\Omega \times G_{d,n}} {\xdiv}_S X (x) \, d V (x,S)\,.
	\]
\end{dfn}
\begin{remk}\label{remk:morefirstvar}\rm
	It is convenient to define the action of $\delta V$ on a function $\phi\in C^{1}_{c}(\Omega)$ as the vector
	\[
	\delta V(\phi) = \big(\delta V(\phi\, e_{1}),\dots,\delta V(\phi\, e_{n})\big)   = \int_{\Omega\times \G} \nabla^S \phi(x)\, dV(x,S)\,.
	\]
	We also notice that $\delta V(X)$ is well-defined whenever $X$ is a Lipschitz vector field such that the measure $\V$ of the set of non-differentiability points for $X$ is zero.  
\end{remk}

\noindent
The definition of first variation can be motivated as follows. Let $\phi^{X}_{t}$ be the one-parameter group of diffeomorphisms generated by the flow of the vector field $X$. Let $\Phi^{X}_{t}$ be the mapping defined on $\R^{n}\times\G$ as
\[
\Phi^{X}_{t}(x,S) = (\phi^{X}_{t}(x), d\phi^{X}_{t}(S))\,.
\]
Set $V_{t}$ as the push--forward of the varifold measure $V$ by the mapping $\Phi^{X}_{t}$, that is,
\[
V_t(\phi) = \int\phi(\Phi^{X}_{t}(x,S))\, J^S\phi^{X}_{t}(x)\, dV(x,S)\,,
\]
where $J^S\phi^{X}_{t}(x)$ denotes the tangential Jacobian of $\phi^{X}_{t}$ at $x$. Then, assuming $\supp(X) \subset \subset A$ for some relatively compact open set $A\subset \Omega$, one has the identity
\[
\delta V(X) = \frac{d}{dt} \|V_{t}\|(A)_{|t=0}\,.
\]

\noindent The linear functional $\delta V$ is, by definition, continuous with respect to the $C^{1}$-topology on $C^{1}_{c}(\Omega,\R^{n})$, however in general it is not continuous with respect to the $\xC_c^0$ topology. In the special case when this is satisfied, that is, for any fixed compact set $K\subset\Omega$ there exists a constant $c_{K}>0$, such that for any vector field $X\in \xC_c^1 (\Omega , \R^n)$ with $\supp X \subset K$, one has
\[
| \delta V (X) | \leq c_K \sup_K |X| \: ,
\] 
we say that $V$ has a \emph{locally bounded first variation}. In this case, by Riesz Theorem, there exists a vector--valued Radon measure on $\Omega$ (still denoted as $\delta V$) such that
\[
\delta V (X) = \int_\Omega X \cdot d\,\delta V \quad \text{for every } X \in \xC_c^0 (\Omega,\mathbb{R}^n)
\]
Thanks to Radon-Nikodym Theorem, we can decompose $\delta V$ as 
\begin{equation}\label{RNLdecomp}
\delta V = - H \|V\| + \delta V_s \: ,
\end{equation}
where $H \in \left( \xL^1_{loc}(\Omega, \| V \|) \right)^n$ and $\delta V_s$ is singular with respect to $\V$. The function $H$ is called the {\it generalized mean curvature vector}. By the divergence theorem, $H$ coincides with the classical mean curvature vector if $V=v(M,1)$, where $M$ is a $d$-dimensional submanifold of class $\xC^2$. 
\medskip

\section{Weak Second Fundamental Form}
\label{sectionGeneralizedCurvatures}

The left-hand side of \eqref{eqHutchinsonLinearP_2} motivates the following definition. 
\begin{dfn}[$G$-linear variation] \label{dfnVariations}
Let $\Omega \subset \R^n$ be an open set and let $V$ be a $d$--varifold in $\Omega$. We fix $i,j,k \in \{1, \ldots, n\}$ and define the distribution $\delta_{ijk} V: \xC_c^1 (\Omega) \rightarrow \R$ as
\[
\delta_{ijk}V(\phi) = \int_{\Omega \times \G} S_{jk} \,  \nabla^S \phi (y) \cdot e_i \, dV(y,S) \: .
\]
We say that $\delta_{ijk} V$ is a $G$-linear variation (or, shortly, a variation) of $V$.
\end{dfn}

\begin{dfn}[Bounded variations] \label{dfnBoundedVariations}
Let $\Omega \subset \R^n$ be an open set and let $V$ be a $d$--varifold in $\Omega$. We say that $V$ has locally \emph{bounded variations} if and only if for $i,j,k = 1 \ldots n$, $\delta_{ijk} V$ is a Radon measure. In this case there exist $\beta_{ijk} \in \xL_{loc}^1(\V)$ and Radon measures $\left( \delta_{ijk} V \right)_s$ that are singular w.r.t. $\V$, such that
\begin{equation}\label{eq:deltaijk-decomposed}
\delta_{ijk} V = - \beta_{ijk} \, \V + \left( \delta_{ijk} V \right)_{s} \: .
\end{equation}
\end{dfn}
The proof of the following proposition is immediate.
\begin{prop} 
For any $S\in \G$ we have $\tr(S) = d$ and therefore 
\[
\sum_{j=1}^n   \delta_{ijj} V  = d \, \delta V \cdot e_i \: ,\qquad \forall\, i = 1, \ldots, n\,.
\]
Hence, if $V$ has locally bounded variations then in particular it has locally bounded \textit{first} variation. 
\end{prop}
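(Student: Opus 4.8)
The plan is to reduce the statement to the defining formula of the $G$-linear variation in Definition~\ref{dfnVariations} together with the linearity of the integral, so that no real machinery is needed. First I would record the elementary fact $\tr(S)=d$: identifying $S\in\G$ with the orthogonal projection $\Pi_S$ and choosing an orthonormal basis $(\tau_1,\dots,\tau_d)$ of $S$, one has $\Pi_S=\sum_{\ell=1}^{d}\tau_\ell\otimes\tau_\ell$, whence $\tr(S)=\sum_{j=1}^{n}S_{jj}=\sum_{\ell=1}^{d}|\tau_\ell|^{2}=d$; equivalently, in an orthonormal basis of $\R^n$ adapted to $S$ the matrix $\Pi_S$ is diagonal with $d$ ones and $n-d$ zeros.

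Next, for a fixed $i$ and an arbitrary $\phi\in\xC_c^1(\Omega)$, I would sum the formula of Definition~\ref{dfnVariations} over $j=1,\dots,n$ and move the finite sum inside the integral, obtaining
\[
\sum_{j=1}^{n}\delta_{ijj}V(\phi)=\int_{\Omega\times\G}\Big(\sum_{j=1}^{n}S_{jj}\Big)\,\nabla^{S}\phi(y)\cdot e_i\,dV(y,S)=d\int_{\Omega\times\G}\nabla^{S}\phi(y)\cdot e_i\,dV(y,S)
\]
by the previous step. By Remark~\ref{remk:morefirstvar} the right-hand integral equals $d\,\delta V(\phi)\cdot e_i=d\,\delta V(\phi\,e_i)$, which is precisely the asserted identity $\sum_{j}\delta_{ijj}V=d\,\delta V\cdot e_i$ of distributions on $\xC_c^1(\Omega)$.

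Finally, for the consequence, assume $V$ has locally bounded variations, so that each $\delta_{ijj}V$ is a Radon measure (Definition~\ref{dfnBoundedVariations}). Since a finite linear combination of Radon measures is again a Radon measure, for each $i$ the distribution $\delta V\cdot e_i=\tfrac1d\sum_{j}\delta_{ijj}V$ is represented by a Radon measure; explicitly, for any compact $K\subset\Omega$ and any $X=\sum_i X_i e_i\in\xC_c^1(\Omega,\R^n)$ with $\supp X\subset K$ one gets $|\delta V(X)|\le\tfrac1d\sum_{i,j=1}^{n}|\delta_{ijj}V|(K)\,\sup_K|X|$, which is exactly the definition of locally bounded first variation (with $c_K=\tfrac1d\sum_{i,j}|\delta_{ijj}V|(K)$). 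I do not foresee a genuine obstacle here; the only point worth a sentence is the passage from the identity of distributions on $\xC_c^1(\Omega)$ to the statement about Radon measures, which follows from the bound just displayed (equivalently, from the density of $\xC_c^1$ in $\xC_c^0$).
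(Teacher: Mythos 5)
Your proof is correct and is exactly the argument the paper has in mind: the paper declares the proof ``immediate'', and the computation you perform (summing the defining integral over $j$ and using $\sum_j S_{jj}=\tr(S)=d$ for an orthogonal projector of rank $d$) is precisely the one the authors carry out explicitly later in the proof of Proposition~\ref{propStructuralProperties}. Your final remark on passing from the distributional identity on $\xC_c^1$ to the Radon-measure bound is a welcome (if routine) clarification, not a deviation.
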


When $M$ is a compact $d$--submanifold of class $\xC^2$, and $V = v(M,1)$ is the associated varifold with multiplicity $1$, then we infer from \eqref{eqHutchinsonLinearP_2} and \eqref{eq:deltaijk-decomposed} that
\begin{equation} \label{eqBetaAijk}
\beta_{ijk} (x) = A_{ijk} (x) + P_{jk}(x)\sum_q A_{qiq} (x) \quad \text{and} \quad \left( \delta_{ijk} V \right)_{s} = P_{jk} \eta_i \cH^{d-1}_{| \partial M}\,,
\end{equation}
for all $x\in M$. Note that in this case we have
\[
P_{jk}(x) = \int_{\G} S_{jk} \, d \nu_x (S)\,,
\]
where $\nu_x = \delta_{P(x)}$. In Lemma \ref{lemma:systemeInversible2} below we prove some crucial properties of the linear system 
\begin{equation} \label{eqBetaAijk-2}
A_{ijk} (x) + \int_{\G} S_{jk} \, d \nu_x (S)\sum_q A_{qiq} (x) = \beta_{ijk} (x)\,,\qquad i,j,k=1,\ldots,n\,,
\end{equation}
and in particular the fact that it completely characterizes the tensor $A=\{A_{ijk}(x)\}$.
\begin{lemma} \label{lemma:systemeInversible2}
Let $c$ be a $n \times n$ positive semi-definite symmetric matrix and let $b = (b_{ijk}) \in \R^{n^3}$. Let us consider the set of $n^3$ equations of unknowns $(a_{ijk})_{i,j,k=1 \ldots n}$
\begin{equation} \label{eq:mainSystem2}
a_{ijk} + c_{jk} \sum_{q} a_{qiq} = b_{ijk} \: , \quad  \text{for } i,j,k=1 \ldots n \: ,
\end{equation}
and let $L$ be the $n^3\times n^3$ matrix associated with system \eqref{eq:mainSystem2}. Then,
\begin{enumerate}
\item the matrix $L$ is invertible and $\det (L) = det (I_n + c)$,
\item the unique solution of the system is
\begin{equation}\label{aijkexplicit}
a_{ijk} = b_{ijk} - c_{jk}[(I+c)^{-1}\, h]_i\,,
\end{equation}
where $h = (h_1,\dots,h_n)$ is defined by $h_i := \sum_q b_{qiq}$.
\end{enumerate}
In particular, if $c$ is an orthogonal projector of rank $d$
or $c = \int_{\G} S \, d \nu (S)$ for a probability measure $\nu$ on $\G$, then $c$ is positive semidefinite and symmetric. Moreover, there exists a dimensional constant $C_0 > 0$ such that
\begin{equation} \label{eqUniformBoundAijkBetaijk}
\| (I + c)^{-1} \| \leq C_0 \quad \text{and} \quad \sup_{ijk} |a_{ijk}| \leq C_0 \sup_{ijk} |b_{ijk}| \: .
\end{equation}
\end{lemma}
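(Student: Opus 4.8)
The plan is to reduce the $n^{3}\times n^{3}$ system \eqref{eq:mainSystem2} to an $n\times n$ one, by isolating the only quantities through which its equations are coupled. For a candidate solution $(a_{ijk})$, set $t=(t_{1},\dots,t_{n})$ with $t_{i}:=\sum_{q}a_{qiq}$; then \eqref{eq:mainSystem2} reads simply $a_{ijk}=b_{ijk}-c_{jk}\,t_{i}$. Specializing this identity to the index triple $(q,i,q)$ and summing over $q$ gives $t_{i}=\sum_{q}b_{qiq}-\sum_{q}c_{iq}t_{q}=h_{i}-(c\,t)_{i}$, i.e. $(I_{n}+c)\,t=h$. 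Since $c$ is symmetric and positive semidefinite, the spectrum of $I_{n}+c$ lies in $[1,+\infty)$, so $I_{n}+c$ is invertible and $\|(I_{n}+c)^{-1}\|\le 1$; hence $t=(I_{n}+c)^{-1}h$ is forced, and plugging this back yields formula \eqref{aijkexplicit}. This already gives uniqueness: two solutions would produce the same vector $t$ (both solving $(I_{n}+c)t=h$) and hence the same tensor. Conversely, defining $a_{ijk}$ by \eqref{aijkexplicit} one checks directly that $\sum_{q}a_{qiq}=h_{i}-(c\,t)_{i}=t_{i}$, using $(I_{n}+c)t=h$, whence $a_{ijk}+c_{jk}\sum_{q}a_{qiq}=b_{ijk}$; so \eqref{aijkexplicit} is indeed the unique solution, which proves part (2) and the invertibility of $L$.

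For the determinant identity in part (1), I would write $L=I_{n^{3}}+N$ with $(Na)_{ijk}=c_{jk}\sum_{q}a_{qiq}$, and factor $N=C\circ T$ through $\R^{n}$: let $T\colon\R^{n^{3}}\to\R^{n}$ be $(Ta)_{i}=\sum_{q}a_{qiq}$ and $C\colon\R^{n}\to\R^{n^{3}}$ be $(Cv)_{ijk}=c_{jk}v_{i}$, so that $CT=N$. Sylvester's determinant identity then gives $\det(L)=\det(I_{n^{3}}+CT)=\det(I_{n}+TC)$, and the one-line computation $(TC\,v)_{i}=\sum_{q}(Cv)_{qiq}=\sum_{q}c_{iq}v_{q}=(c\,v)_{i}$ shows $TC=c$. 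Hence $\det(L)=\det(I_{n}+c)$, which is $\ge 1$, reconfirming invertibility.

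The remaining statements are an elementary spectral check plus a bookkeeping estimate. If $c$ is an orthogonal projector of rank $d$, it is symmetric with spectrum $\{0,1\}$; if $c=\int_{\G}S\,d\nu(S)$ with $\nu$ a probability measure, then, since every $S\in\G$ is a symmetric projector with $0\le S\le I_{n}$, averaging preserves symmetry and $0\le c\le I_{n}$. In both cases $c$ is symmetric, positive semidefinite, with operator norm $\le 1$, so $|c_{jk}|=|e_{j}^{\top}c\,e_{k}|\le\|c\|\le 1$ and, as above, $\|(I_{n}+c)^{-1}\|\le 1$. From \eqref{aijkexplicit}, $|a_{ijk}|\le |b_{ijk}|+|c_{jk}|\,\bigl|(I_{n}+c)^{-1}h\bigr|_{\infty}\le \sup_{ijk}|b_{ijk}|+\|(I_{n}+c)^{-1}\|\,|h|$, and $|h|\le\sqrt{n}\,\max_{i}\bigl|\sum_{q}b_{qiq}\bigr|\le n^{3/2}\sup_{ijk}|b_{ijk}|$; therefore $C_{0}:=1+n^{3/2}$ is a dimensional constant for which both inequalities in \eqref{eqUniformBoundAijkBetaijk} hold.

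I do not expect a serious obstacle here: the only point requiring a little care is setting up the factorization $N=C\circ T$ so that Sylvester's identity applies and $TC$ comes out exactly equal to $c$; the rest is standard linear algebra and elementary norm estimates.
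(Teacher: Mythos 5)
Your proof is correct, and in two places it takes a genuinely different route from the paper's. For the determinant identity, the paper orders the triples lexicographically and performs explicit block column/row eliminations on the $n^3\times n^3$ matrix to reach an upper triangular block form with diagonal blocks $(I_n,\dots,I_n,I_n+c)$; you instead observe that $L=I_{n^3}+CT$ with $C,T$ factoring through $\R^n$ and invoke Sylvester's identity $\det(I_{n^3}+CT)=\det(I_n+TC)=\det(I_n+c)$, which is shorter and makes the rank-$n$ structure of the coupling transparent. For the explicit solution, the paper verifies \eqref{aijkexplicit} a posteriori by substitution using $(I+c)^{-1}-I+c(I+c)^{-1}=0$, whereas you derive it constructively by isolating $t_i=\sum_q a_{qiq}$ and reducing to $(I+c)t=h$; this gives existence and uniqueness in one pass. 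Finally, for the uniform bound, the paper goes through $\det(I+c)\geq 2^d$ (via log-concavity of the determinant and Jensen) together with the comatrix formula, while you get the sharper $\|(I+c)^{-1}\|\leq 1$ directly from the spectrum of $I+c$ lying in $[1,2]$; since the lemma only asserts the existence of a dimensional constant $C_0$, your spectral argument suffices and is more economical (the $2^d$ lower bound on the determinant is not needed for the stated conclusion). All steps check out, including the bookkeeping $|h|\leq n^{3/2}\sup_{ijk}|b_{ijk}|$ yielding $C_0=1+n^{3/2}$.
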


\begin{proof}
We first order the $3$-tuples $(i,j,k)$ in lexicographic order. Then, we note that, except for the $(l,m,l)$ columns/rows, every other column and row of $L$ contains exactly one coefficient (the diagonal one) which is equal to $1$, while all the other coefficients are zero. Therefore one has $\det (L) = \det (L^\prime)$, where $L^\prime$ is the following matrix:
\[ \begin{array}{c|cccc|cccc|c|cccc}
        & 111 & 121 & \ldots & 1n1 & 212 & 222 & \ldots & 2n2 & \ldots & n1n & n2n & \ldots & nnn \\ 
 \hline 
 111    & 1+c_{11} & 0      &        & 0      & c_{11} & 0        &  & 0      &  & c_{11} & 0 &  & 0 \\ 
 121    & c_{21}   & 1      &        & 0      & c_{21} & 0        &  & 0      &  & c_{21} & 0 &  & 0 \\ 
 \vdots & \vdots   & \vdots & \ddots & \vdots & \vdots & \vdots   &  & \vdots &  & \vdots & \vdots &  & \vdots \\ 
 1n1    & c_{n1}   & 0      &        & 1      & c_{n1} & 0        &  & 0      &  & c_{n1} & 0 &  & 0 \\ 
\hline
 212    & 0        & c_{12} &        & 0      & 1      & c_{12}   &  & 0      &  & 0 & c_{12} &  & 0 \\ 
 222    & 0        & c_{22} &        & 0      & 0      & 1+c_{22} &  & 0      &  & 0 & c_{22} &  & 0 \\ 
 \vdots & \vdots   & \vdots &        &\vdots  &\vdots &\vdots &\ddots& \vdots &  & \vdots & \vdots &  & \vdots  \\ 
 2n2    & 0        & c_{n2} &        & 0      & 0      & c_{n2}   &  & 1      &  & 0 & c_{n2} &  & 0 \\ 
\hline
 \vdots & \vdots   & \vdots &        & \vdots & \vdots & \vdots   &  & \vdots &  & \vdots & \vdots &  & \vdots \\ 
\hline
 n1n    & 0        & 0      &        & c_{1n} & 0      & 0        &  & c_{1n} &  & 1 & 0 &  & c_{1n} \\ 
 n2n    & 0        & 0      &        & c_{2n} & 0      & 0        &  & c_{2n} &  & 0 & 1 &  & c_{2n} \\ 
 \vdots & \vdots   & \vdots &        & \vdots & \vdots & \vdots   &  & \vdots &  & \vdots & \vdots & \ddots & \vdots \\ 
 nnn    & 0        & 0      &        & c_{nn} & 0      & 0        &  & c_{nn} &  & 0 & 0 &  & 1 +c_{nn}
 \end{array} \]
We then substract the $n$ last columns $(n1n,n2n, \ldots ,nnn)$ by block to the columns $(l1l,l2l, \ldots ,lnl)$ for $l=1 \ldots n-1$. Then, the last columns are unchanged, and for $l=1 \ldots n-1$, the columns $(l1l,l2l, \ldots ,lnl)$ are constituted of $I_n$ on lines $(l1l,l2l, \ldots ,lnl)$, $-I_n$ on last lines $(n1n,n2n, \ldots ,nnn)$ and zeros elsewhere. Therefore, when adding by block to the last lines $(n1n,n2n, \ldots ,nnn)$ the lines $(l1l,l2l, \ldots ,lnl)$ for $l = 1 \ldots n-1$, the determinant of $L^\prime$ is equal to the determinant of an upper triangular block-matrix, whose diagonal blocks are $(I_n, I_n, \ldots , I_n , I_n + c)$ and eventually
\[
\det (L) = \det (I_n + c) \: .
\]

The proof of the second assertion follows by plugging \eqref{aijkexplicit} into \eqref{eq:mainSystem2} and by using the matrix identity 
\[
(I+c)^{-1} - I + c(I+c)^{-1} = 0\,.
\]


If now $c = \int_{S \in \G} S \, d\nu(S)$, then $c$ is a symmetric matrix, and for every vector $X \in \R^n$ one has
\[
cX \cdot X = \int_{S \in \G} \underbrace{ SX \cdot X }_{\geq 0} \, d\nu(S) \geq 0 \: .
\]
Moreover, if $c$ is an orthogonal projector of rank $d$, then $\det (I+c) = 2^d$ (indeed, $c$ is diagonalisable with $d$ eigenvalues equal to $1$ and the remaining ones equal to $0$) and if $c = \int_{S \in \G} S \, d\nu(S)$,
then by $\log$--concavity of the determinant on the convex set of positive definite matrices, and by Jensen inequality, one obtains
\begin{align*}
\log (2^d) &= \int_{\G} \log \left( \det (I + S) \right) \, d \nu(S) \leq \log \left( \det \Big(\int_{\G} (I + S) \, d \nu(S)\Big) \right) = \log \left( \det(I + c) \right) \: , 
\end{align*}
and thus $\det (I + c) \geq 2^d$.
%
Being $S$ a projector, we have $| S_{jk} | \leq 1$ and thus
\[
| c_{jk} | = \left| \int_{\G} S_{jk} \, d \nu_x (S) \right| \leq \int_{\G} | S_{jk} | \, d \nu_x (S) \leq 1 \: ,
\]
therefore we obtain $\| I +c \|_{\infty} \leq 2$. Moreover $\det (I+c) \geq 2^d$, hence it is easy to bound $\| (I+c)^{-1} \|$
using the formula
\[
(I + c)^{-1} = \frac{1}{\det (I+c)} \mathrm{comatrix} (I+c)^T \: .
\]
This concludes the proof.
%
%
\end{proof}


We fix the notations we will stick to thereafter. For a $d$--varifold $V=\V \otimes \nu_x$ with bounded variations $\delta_{ijk} V$, we let $\beta^V = \left( \beta_{ijk}^V \right)_{ijk}$ be such that $\delta_{ijk} V = - \beta_{ijk}^V \, \V + \delta_{ijk} V_s$. Then we set $c^V = \int_{S \in \G} S \, d \nu_x(S)$, which is defined for $\V$--almost every $x$. 
Applying Lemma~\ref{lemma:systemeInversible2} to the linear system
\begin{equation} \label{eqExactSystem}
a_{ijk} + c_{jk}^V(x) \sum_{l} a_{lil} = \beta_{ijk}^V(x) \: , \quad  \text{for } i,j,k=1 \ldots n
\end{equation}
we find that it admits a unique solution for $\V$-almost every $x$. This motivates the following definition.

%

\begin{dfn}[Weak second fundamental form] \label{dfn:weakFundForm}
Let $\Omega \subset \R^n$ be an open set and $V$ be a $d$--varifold in $\Omega$ with locally bounded variations. Given the tensor $\{\beta_{ijk}\}$ from Definition~\ref{dfnBoundedVariations} we call \emph{weak second fundamental form} the unique solution $A^V=\{A_{ijk}^V(\cdot)\}$ of \eqref{eqExactSystem}. In particular, for all $i,j,k$ $A_{ijk}^V(\cdot)\in L^1_{loc}(\|V\|)$ and
\[
\delta_{ijk}V = -\left(A_{ijk}^V(x)+ \int_{\G} S_{jk} \, d \nu_x (S)\sum_q A_{qiq}^V(x)\right)\V+ (\delta_{ijk}V)_{s}.
\]
In view of the classical notion of second fundamental form, we also define the tensor $B^V = \{B_{ij}^{V,k}\}$ as
\begin{equation} \label{eq:explicitExpressionB}
B_{ij}^{V,k} = \frac{1}{2} \left( A_{ijk}^V + A_{jik}^V - A_{kij}^V \right) \: .
\end{equation}
\end{dfn}


Calling $A^V$ a second fundamental form (instead of $B^V$) is a certain abuse of terminology. It is actually very easy to switch between both tensors as the following result shows.

\begin{prop} \label{propWellDefined}
The weak second fundamental form $\{ B_{ij}^{V,k} \}$ defined in \eqref{eq:explicitExpressionB} is equivalently characterized by
\begin{equation} \label{eq:systemDfnB}
A^V_{ijk} = B_{ij}^{V,k} + B_{ik}^{V,j}  \quad \text{and} \quad B_{ij}^{V,k} = B_{ji}^{V,k} \qquad \text{for } i,j,k = 1 \ldots n\,.
\end{equation}
\end{prop}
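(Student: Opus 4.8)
The plan is to verify the equivalence by elementary linear algebra, showing that the map sending a symmetric tensor $\{B_{ij}^{k}\}$ (symmetric in $i,j$) to $\{A_{ijk}\}$ via $A_{ijk}=B_{ij}^{k}+B_{ik}^{j}$ and the map \eqref{eq:explicitExpressionB} sending $\{A_{ijk}\}$ to $\{B_{ij}^{k}\}$ are mutually inverse, so that the system \eqref{eq:systemDfnB} has a unique solution which must therefore coincide with the $B^V$ defined by \eqref{eq:explicitExpressionB}. First I would record the symmetry $A_{ijk}^{V}=A_{ikj}^{V}$, which is immediate from Definition~\ref{dfn:weakFundForm}: the tensor $\beta_{ijk}^V$ coming from Definition~\ref{dfnBoundedVariations} satisfies $\beta_{ijk}^V=\beta_{ikj}^V$ because $S_{jk}=S_{kj}$ for every $S\in\G$, and likewise $c_{jk}^V=c_{kj}^V$; hence the systems \eqref{eqExactSystem} for the triples $(i,j,k)$ and $(i,k,j)$ coincide, and by the uniqueness part of Lemma~\ref{lemma:systemeInversible2} we get $A_{ijk}^V=A_{ikj}^V$.

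Next I would check the forward direction: starting from the definition \eqref{eq:explicitExpressionB} of $B_{ij}^{V,k}$, I compute $B_{ij}^{V,k}+B_{ik}^{V,j}=\tfrac12(A_{ijk}^V+A_{jik}^V-A_{kij}^V)+\tfrac12(A_{ikj}^V+A_{kij}^V-A_{jik}^V)$, and after cancellation of the $A_{jik}^V$ and $A_{kij}^V$ terms this equals $\tfrac12(A_{ijk}^V+A_{ikj}^V)=A_{ijk}^V$, using the symmetry $A_{ijk}^V=A_{ikj}^V$ established above. The symmetry $B_{ij}^{V,k}=B_{ji}^{V,k}$ follows directly from \eqref{eq:explicitExpressionB} by swapping $i$ and $j$: the right-hand side becomes $\tfrac12(A_{jik}^V+A_{ijk}^V-A_{kji}^V)$, which equals $\tfrac12(A_{jik}^V+A_{ijk}^V-A_{kij}^V)$ again by the symmetry in the last two indices of $A^V$. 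Thus the $B^V$ from \eqref{eq:explicitExpressionB} satisfies both equations in \eqref{eq:systemDfnB}.

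For the converse, I would show that \eqref{eq:systemDfnB} determines $B^V$ uniquely, so that any tensor solving it must equal the one given by \eqref{eq:explicitExpressionB}. Given a tensor $\{B_{ij}^{k}\}$ with $B_{ij}^{k}=B_{ji}^{k}$ and $A_{ijk}^V=B_{ij}^{k}+B_{ik}^{j}$, I form the combination $\tfrac12(A_{ijk}^V+A_{jik}^V-A_{kij}^V)=\tfrac12\big((B_{ij}^{k}+B_{ik}^{j})+(B_{ji}^{k}+B_{jk}^{i})-(B_{ki}^{j}+B_{kj}^{i})\big)$; using $B_{ij}^{k}=B_{ji}^{k}$, $B_{ik}^{j}=B_{ki}^{j}$ and $B_{jk}^{i}=B_{kj}^{i}$, the last four terms cancel in pairs and I am left with $B_{ij}^{k}$, which is exactly \eqref{eq:explicitExpressionB}. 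This proves that \eqref{eq:systemDfnB} has $\{B_{ij}^{V,k}\}$ from \eqref{eq:explicitExpressionB} as its unique solution, completing the equivalence. There is no serious obstacle here; the only point requiring mild care is the bookkeeping of index symmetries — in particular remembering that $A^V$ is symmetric only in its last two indices, not in the first — and making sure the symmetry $A_{ijk}^V=A_{ikj}^V$ is justified via Lemma~\ref{lemma:systemeInversible2} before it is used.
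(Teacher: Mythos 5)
Your proof is correct and follows essentially the same route as the paper: both directions rest on the same index computations, and the key symmetry $A_{ijk}^V=A_{ikj}^V$ is justified exactly as in the paper's proof (via $\beta_{ijk}^V=\beta_{ikj}^V$, $c_{jk}^V=c_{kj}^V$, and the uniqueness in Lemma~\ref{lemma:systemeInversible2}). The only difference is presentational — you frame the two computations as showing the maps are mutually inverse, while the paper states them as two implications — but the content is identical.
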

\begin{proof}
Let us simplify the notation by setting $A_{ijk}= A_{ijk}^V$ and $B_{ij}^k = B_{ij}^{V,k}$. Assume first that $B_{ij}^k$ satisfies \eqref{eq:systemDfnB}, so that 
\begin{align*}
A_{ijk} + A_{jik} = B_{ij}^k + B_{ik}^j + B_{ji}^k + B_{jk}^i = 2 B_{ij}^k + B_{ki}^j + B_{kj}^i = 2 B_{ij}^k + A_{kij}
\end{align*}
and this proves \eqref{eq:explicitExpressionB}.
Conversely, if $\{B_{ij}^k\}$ satisfies \eqref{eq:explicitExpressionB}, then by the symmetry property $A_{ijk} = A_{ikj}$ (following from $c_{jk} = c_{kj}$, $\delta_{ijk} V = \delta_{ikj} V$ and thus $\beta_{ijk}^V = \beta_{ikj}^V$ and the formula \eqref{aijkexplicit}) we obtain $B_{ij}^k = B_{ji}^k$; moreover we have
\begin{align*}
B_{ij}^k + B_{ik}^j & = \frac{1}{2} \left( A_{ijk} + A_{jik} - A_{kij} \right) + \frac{1}{2} \left( A_{ikj} + A_{kij} - A_{jik}  \right) = \frac{1}{2} \left( A_{ijk} + A_{ikj} \right) = A_{ijk} \: ,
\end{align*}
so that $B_{ij}^k$ satisfies \eqref{eq:systemDfnB}.
\end{proof}
\medskip

In the case of an integral varifold, Brakke has shown that the mean curvature vector is orthogonal to the approximate tangent plane almost everywhere (see \cite{brakke}) and therefore, being $c^V$ the orthogonal projector on the approximate tangent plane in this case, one has $(I + c^V)^{-1} = I - \frac{c^V}{2}$ and therefore 
\begin{equation} \label{eqStructuralProperties}
\sum_q A_{q \cdot q}^V =  H^V \quad \text{and} \quad \sum_q A_{\cdot qq}^V = 0 \, .
\end{equation}
With this observation in force, we now introduce a modified version of weak second fundamental form, that is denoted by WSFF$^\perp$ and coincides with the WSFF whenever the varifold is integral and has bounded variations (see Proposition~\ref{propBetaPerp}). We will later show in Proposition~\ref{propStructuralPropertiesPerp} that this orthogonal weak second fundamental form WSFF$^\perp$ satisfies in addition the structural properties \eqref{eqStructuralProperties}.

\begin{dfn}[WSFF$^\perp$, Orthogonal weak second fundamental form] \label{dfnBetaAijkPerp}
	Let $\Omega \subset \R^n$ be an open set and let $V = \V \otimes \nu_x$ be a $d$--varifold in $\Omega$. For $\V$--almost every $x$, we define
	\begin{equation}\label{eq:betaperp1}
	\beta_{ijk}^{V,\perp} (x) = \beta_{ijk}^V (x) - c_{jk}^V (x) \left( c^V(x) \sum_q \beta_{q \cdot q}^V (x) \right)_i
	\end{equation}
	and $A_{ijk}^{V, \perp} (x)$ as the solution to the linear system \eqref{eq:mainSystem2}, with $b_{ijk} = \beta_{ijk}^{V, \perp} (x)$ and $c_{jk} = c_{jk}^V(x)$. The tensor $A_{ijk}^{V, \perp} (x)$ will be then referred to as the orthogonal weak second fundamental form WSFF$^\perp$ of $V$.
\end{dfn}

\begin{prop} \label{propBetaPerp}
	Let $\Omega \subset \R^n$ be an open set and let $V = v(M,\theta)$ be an integral $d$--varifold in $\Omega$ with bounded variations. Then, for $\cH^d$--almost every $x$, one has $\beta_{ijk}^{V,\perp} (x) = \beta_{ijk}^V(x)$, hence the WSFF and WSFF$^\perp$ coincide.
\end{prop}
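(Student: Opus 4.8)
The plan is to show that for an integral varifold $V=v(M,\theta)$ the extra term in \eqref{eq:betaperp1} vanishes $\cH^d$-almost everywhere, i.e. that
\[
c_{jk}^V(x)\Big(c^V(x)\sum_q\beta_{q\cdot q}^V(x)\Big)_i = 0\qquad\text{for }\cH^d\text{-a.e. }x.
\]
The starting observation is that for an integral varifold the Young-measure fiber is $\nu_x=\delta_{T_xM}$ for $\cH^d$-a.e.\ $x$, so $c^V(x)=P(x)$ is the \emph{orthogonal projector} onto the approximate tangent plane $T_xM$. Thus $c^V(x)$ is idempotent, and the whole computation reduces to linear algebra on a projector together with one analytic input: the fact (Brakke, \cite{brakke}) that the generalized mean curvature vector $H^V(x)$ is \emph{orthogonal} to $T_xM$ for $\cH^d$-a.e.\ $x$, which for a varifold with bounded variations is legitimate to invoke.

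First I would identify $\sum_q\beta_{q\cdot q}^V(x)$ with (a multiple of) the mean curvature vector. Recall $\sum_j\delta_{ijj}V = d\,\delta V\cdot e_i$ from the Proposition following Definition~\ref{dfnBoundedVariations}; taking densities with respect to $\V$ and using the decompositions \eqref{eq:deltaijk-decomposed} and \eqref{RNLdecomp} gives $\sum_j\beta_{ijj}^V = d\,H_i^V$, i.e.\ $\sum_q\beta_{q\cdot q}^V = dH^V$ wait --- more precisely $\big(\sum_q\beta_{q\cdot q}^V\big)_i=\sum_j\beta_{ijj}^V$; I would fix the index bookkeeping carefully here, but the upshot is that this vector is a scalar multiple of $H^V(x)$ (in the reference-$M$, multiplicity-$1$ case one reads off $\sum_q A_{qiq}=H\cdot e_i$ from \eqref{eqHutchinsonLinearP_2}, and \eqref{eqStructuralProperties} records exactly the orthogonality we will use). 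Then I apply Brakke's orthogonality: $H^V(x)\perp T_xM$, hence $P(x)\,H^V(x)=0$, so $c^V(x)\sum_q\beta_{q\cdot q}^V(x)=0$ and a fortiori the correction term in \eqref{eq:betaperp1} vanishes. Therefore $\beta_{ijk}^{V,\perp}(x)=\beta_{ijk}^V(x)$ for $\cH^d$-a.e.\ $x$, and since both $A^V$ and $A^{V,\perp}$ are obtained by solving the \emph{same} linear system \eqref{eq:mainSystem2} with the same $c=c^V(x)$ and now the same right-hand side, Lemma~\ref{lemma:systemeInversible2} gives uniqueness and hence $A^{V,\perp}=A^V$, so WSFF and WSFF$^\perp$ coincide.

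The only genuinely delicate point is the appeal to Brakke's theorem: one must check that "integral $d$-varifold with bounded variations" is enough to guarantee $H^V\perp T_xM$ $\cH^d$-a.e. Since $V$ has bounded variations it has, in particular, locally bounded \emph{first} variation (the Proposition after Definition~\ref{dfnBoundedVariations}), so the generalized mean curvature $H^V$ exists as the $\V$-density of the absolutely continuous part of $\delta V$, and Brakke's perpendicularity theorem applies to integral varifolds with locally bounded first variation. The remaining steps --- the index computation identifying $\sum_q\beta^V_{q\cdot q}$ with the mean curvature vector, and the linear-algebra deduction that a projector kills a vector in its orthogonal complement --- are routine. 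I would present the argument essentially in the three bullet-free stages above: (i) $c^V=P$ is a projector for integral $V$; (ii) $\sum_q\beta^V_{q\cdot q}$ is (a multiple of) $H^V$, which is $P$-orthogonal by Brakke; (iii) conclude the correction term vanishes and invoke uniqueness in Lemma~\ref{lemma:systemeInversible2}.
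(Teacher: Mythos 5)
Your proposal is correct and follows essentially the same route as the paper: identify $\sum_q \beta_{q\cdot q}^V$ with $H^V$ via Proposition~\ref{propStructuralProperties}, invoke Brakke's perpendicularity theorem to get $c^V(x)H^V(x)=0$ for $\cH^d$-a.e.\ $x$, and conclude from \eqref{eq:betaperp1}. The extra details you supply (that $c^V=P$ is a projector for integral $V$, and the uniqueness argument from Lemma~\ref{lemma:systemeInversible2} for passing from $\beta^{V,\perp}=\beta^V$ to $A^{V,\perp}=A^V$) are consistent with, and slightly more explicit than, the paper's own proof.
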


\begin{proof}
	We know from Proposition~\ref{propStructuralProperties} that $\sum_q \beta_{q \cdot q}^V = H^V$ and by Brakke's result (\cite{brakke}) we know that the mean curvature vector of an integral varifold is orthogonal to the approximate tangent plane almost everywhere. Therefore, for $\cH^d$--almost every $x$,
	\[
	c^V(x) \sum_q \beta_{q \cdot q}^V (x) = 0 \: ,
	\]
	which together with \eqref{eq:betaperp1} gives the conclusion.
\end{proof}
\medskip

\section{A comparison with Hutchinson's Generalized Second Fundamental Form}
\label{sectionHutchinsonSecondFundamentalForm}
In the previous section we have introduced the notion of weak second fundamental form (WSFF). This notion corresponds to a modification of the \textit{generalized second fundamental form} that was first proposed and studied by Hutchinson in \cite{Hutchinson,Hutchinson2}. As we will explain later on, our WSFF corrects some critical aspects that are present in Hutchinson's definition, it is more consistent with Allard's definitions of first variation and of generalized mean curvature, and it is well-suited for regularization and approximation, according to the scheme proposed in \cite{BuetLeonardiMasnou}.  

In order to better motivate our WSFF, a comparison with Hutchinson's generalized tensor is necessary. We start recalling that Hutchinson's tensor arises from the application of the tangential divergence theorem \eqref{eq:divthmonmanifold} ($M$ being a smooth $d$-submanifold of $\R^n$) to vector fields of the form $X_i(x,S) = \phi (x,S) e_i$, with $\phi \in \xC_c^1 (\Omega \times {\rm M}_n (\R) )$, ${\rm M}_n (\R)$ being the space of real matrices of size $n \times n$ and $e_i$ being the $i$-th vector of the canonical basis of $\R^n$. Indeed, one obtains in this case an identity similar to \eqref{eqHutchinsonLinearP_2}, that is,
\begin{equation} \label{eqSmoothDivThmHutchinson}
- \int_M \nabla^{M} \phi (y,P(y)) \cdot e_i \: d \cH^d(y) = \int_M \Bigg( \sum_{j,k = 1}^n  A_{ijk} (y) D_{jk}^\ast \phi (y,P(y))  + \phi(y,P(y)) \sum_{q = 1}^n A_{qiq}(y) \Bigg) d \cH^d(y)
\end{equation}
where $D_{jk}^\ast \phi$ is the partial derivative of $\phi$ with respect to the variable $S_{jk}$, and $\nabla^M \phi$ is the tangential gradient of $\phi(y,S)$ with respect to $y$. Here, $M$ is assumed to be a $d$-submanifold of $\R^n$ without boundary, and the projection onto its tangent plane at $x$ is denoted by $P(x) = \left( P_{jk} (x) \right)_{jk}$.

Extending this identity to the varifold setting led Hutchinson to Definition~\ref{dfnHutchinsonWeakSecondFundamentalForm} below. Let us point out that \eqref{eqSmoothDivThmHutchinson} holds true for submanifolds without boundary, therefore Definition~\ref{dfnHutchinsonWeakSecondFundamentalForm} does not take into account boundary terms. However, we recall that a notion of curvature varifold with boundary has been studied in \cite{Mantegazza}.

\begin{dfn}[Hutchinson's Generalized Second Fundamental Form] \label{dfnHutchinsonWeakSecondFundamentalForm}
	Let $\Omega \subset \R^n$ be an open set and let $V$ be an integral $d$--varifold in $\Omega$. We say that $V$ admits a generalized second fundamental form in the sense of Hutchinson if there exists a family $\left( A_{ijk} \right)_{ijk=1 \ldots n} \in \xL^1(V)$ such that for all $\phi \in \xC_c^1 (\Omega \times {\rm M}_n (\R))$,
	\begin{equation} \label{eqHutchinsonBPI}
	\int_{\Omega \times \G} \Bigg( \nabla^S \phi (y,S) \cdot e_i + \sum_{j,k = 1}^n D_{jk}^\ast \phi (y,S) A_{ijk}(y,S) + \phi(y,S) \sum_{q = 1}^n A_{qiq} (y,S) \Bigg) d V(y,S) = 0 \: .
	\end{equation}
\end{dfn}
A first observation concerning Hutchinson's tensor is that, in general, it may depend not only on the spatial variable $x$, but also on the Grassmannian variable $S$. On the one hand, this seems not fully consistent with the fact that Allard's generalized mean curvature only depends on $x$. On the other hand, Hutchinson himself implicitly shows in \cite[Proposition 5.2.2]{Hutchinson} that his tensor ultimately depends upon $x$ - and not on $S$ - as soon as the varifold is rectifiable. 

A second observation is about the existence of the tensor as a distribution, which is a delicate issue even if one restricts the analysis to integral varifolds. Here one finds a structural obstruction. Indeed, due to the form of the middle term in the left-hand side of \eqref{eqHutchinsonBPI}, where the $jk$-derivative of $\phi$ multiplies $A_{ijk}$, it is not possible to interpret the identity as a distributional definition of the tensor $A$. However, we note that the class of test functions ensuring \textit{uniqueness} of the tensor $A$ is strictly smaller than the class used in the definition of this tensor, see \cite[Proposition 5.2.2]{Hutchinson}. To be more specific, that definition uses test functions which are possibly nonlinear in the Grassmannian variable, however linearity in the Grassmannian variable is enough to prove uniqueness. From this observation we infer that Hutchinson's definition contains a \textit{core distributional notion} (the one leading to our WSFF), plus extra constraints coming from test functions $\phi(x,S)$ that are nonlinear in $S$, which make Hutchinson's definition more rigid. As we have said in the introduction, this rigidity affects the structure of the singularities of integral varifolds whose curvature tensor $A$ is bounded. Indeed, a non-quantitative version of Lemma 3.3 coupled with Corollary 3.5 in \cite{Hutchinson2} shows that an integral varifold $V$, with curvature tensor in $\xL^p_{loc}$ for $p>d$, only admits blow-ups in the form of finite unions of $d$-planes with multiplicities.


We now restrict our analysis to rectifiable varifolds, as actually done in \cite{Hutchinson}. We first prove the following proposition.
\begin{prop}[WSFF versus Hutchinson's tensor: the case of rectifiable varifolds]\label{prop:CoincideOnRectifiable}
If a rectifiable varifold $V$ in $\R^n$ admits a generalized second fundamental form in the sense of Hutchinson (Definition~\ref{dfnHutchinsonWeakSecondFundamentalForm}), then it also admits a weak second fundamental form in the sense of Definition~\ref{dfn:weakFundForm}, and the two tensors coincide $\V$-almost everywhere.
\end{prop}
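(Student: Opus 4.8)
The plan is to feed Hutchinson's identity \eqref{eqHutchinsonBPI} with test functions that are \emph{linear} in the Grassmannian variable --- exactly the class that produces the $G$-linear variations $\delta_{ijk}V$ --- and then use rectifiability to collapse the $S$-dependence. Fix $i,j,k\in\{1,\dots,n\}$ and $\psi\in\xC_c^1(\Omega)$. I would like to plug $\phi(y,S)=\psi(y)S_{jk}$ into \eqref{eqHutchinsonBPI}; since this is not compactly supported in the matrix variable, I first pick a cutoff $\chi\in\xC_c^1({\rm M}_n(\R))$ with $0\le\chi\le1$ that is identically $1$, with vanishing differential, on an open neighborhood of the (compact) set of rank-$d$ orthogonal projectors, and test \eqref{eqHutchinsonBPI} against $\tilde\phi(y,S)=\psi(y)\chi(S)S_{jk}\in\xC_c^1(\Omega\times{\rm M}_n(\R))$. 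Because $V$ is carried by $\Omega\times\G$, where $\chi\equiv1$ and $\nabla_S\chi=0$, on the support of $V$ one has $\nabla^S\tilde\phi(y,S)\cdot e_i=S_{jk}\,\nabla^S\psi(y)\cdot e_i$, $\sum_{l,m}D^\ast_{lm}\tilde\phi(y,S)\,A_{ilm}(y,S)=\psi(y)A_{ijk}(y,S)$, and $\tilde\phi(y,S)\sum_q A_{qiq}(y,S)=\psi(y)S_{jk}\sum_q A_{qiq}(y,S)$, so \eqref{eqHutchinsonBPI} becomes
\[
\int_{\Omega\times\G}\Big(S_{jk}\,\nabla^S\psi(y)\cdot e_i+\psi(y)A_{ijk}(y,S)+\psi(y)S_{jk}\sum_q A_{qiq}(y,S)\Big)\,dV(y,S)=0.
\]

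Next I would recognize the first term as $\delta_{ijk}V(\psi)$ (Definition~\ref{dfnVariations}) and disintegrate $V=\|V\|\otimes\nu_x$ via Proposition~\ref{prop:Young}. Since $V=v(M,\theta)$ is rectifiable, $\nu_x=\delta_{P(x)}$ with $P(x)=\Pi_{T_xM}$ for $\|V\|$-almost every $x$, hence $c^V_{jk}(x)=\int_\G S_{jk}\,d\nu_x(S)=P_{jk}(x)$ and the identity above rewrites as
\[
\delta_{ijk}V(\psi)=-\int_\Omega\psi(x)\Big(A_{ijk}(x,P(x))+P_{jk}(x)\sum_q A_{qiq}(x,P(x))\Big)\,d\|V\|(x).
\]
Since $A_{ijk}\in\xL^1(V)$, i.e.\ $x\mapsto A_{ijk}(x,P(x))$ belongs to $\xL^1(\|V\|)$, and $|P_{jk}|\le1$, the function
\[
\beta_{ijk}(x):=A_{ijk}(x,P(x))+P_{jk}(x)\sum_q A_{qiq}(x,P(x))
\]
lies in $\xL^1_{loc}(\|V\|)$; therefore each $\delta_{ijk}V$ is a Radon measure, absolutely continuous with respect to $\|V\|$ and with no singular part. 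Thus $V$ has locally bounded variations in the sense of Definition~\ref{dfnBoundedVariations}, its WSFF is well defined (Definition~\ref{dfn:weakFundForm}), and $\beta^V_{ijk}=\beta_{ijk}$.

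It then remains to match the two tensors. The displayed expression for $\beta^V_{ijk}$ says precisely that the map $x\mapsto A_{ijk}(x,P(x))$ is, for $\|V\|$-almost every $x$, a solution of the linear system \eqref{eqExactSystem} with data $b=\beta^V(x)$ and $c=c^V(x)=P(x)$. Since $P(x)$ is an orthogonal projector of rank $d$, $I_n+c^V(x)$ is invertible and Lemma~\ref{lemma:systemeInversible2} guarantees that this solution is unique; the WSFF $A^V_{ijk}(x)$ being, by definition, that unique solution, we conclude $A^V_{ijk}(x)=A_{ijk}(x,T_xM)$ for $\|V\|$-almost every $x$, which is exactly the claimed coincidence (Hutchinson's tensor being $V$-a.e.\ determined by its values at the points $(x,T_xM)$).

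As for the main obstacle: the algebra is routine once the test functions are chosen well, so the only genuinely delicate step is the truncation in the Grassmannian variable --- one must make sure that replacing $\psi(y)S_{jk}$ by its compactly supported truncation $\tilde\phi$ leaves all three terms of \eqref{eqHutchinsonBPI} unchanged on the support of $V$, which is exactly why the cutoff is required to be identically $1$ with vanishing differential on a full neighborhood of the rank-$d$ projectors. One could alternatively shorten the argument by invoking \cite[Proposition~5.2.2]{Hutchinson}, where Hutchinson's tensor on a rectifiable varifold is shown to depend on $x$ only, but the direct route above keeps the proof self-contained.
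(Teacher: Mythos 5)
Your proof is correct and follows essentially the same route as the paper's: test Hutchinson's identity \eqref{eqHutchinsonBPI} against functions linear in the Grassmannian variable, use rectifiability ($\nu_x=\delta_{P(x)}$) to reduce to an $x$-dependent identity showing $\delta_{ijk}V=-\beta_{ijk}\|V\|$ with no singular part, and conclude by uniqueness of the solution to the linear system \eqref{eqExactSystem} via Lemma~\ref{lemma:systemeInversible2}. The only difference is your explicit cutoff $\chi$ in the matrix variable, a technical care the paper leaves implicit when it tests directly with $\phi(x)S_{jk}$.
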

\begin{proof}
Let $V = v(M,\theta)$ be a $d$--rectifiable varifold in $\R^n$ and let us denote, for $\V$--almost every $x$, by $P(x)$ its approximate tangent plane at $x$. We assume that there exists $\tilde{A}_{ijk} \in \xL^1 (\R^n \times \G, V)$ sastisfying \eqref{eqHutchinsonBPI}. Taking test functions of the form $\psi_{jk} (x,S) = \phi(x) S_{jk}$, one has 
\[
D_{j^\prime k^\prime}^\ast \psi_{jk} (x, S) = \left\lbrace \begin{array}{ll}
1 & \text{if } (j^\prime, k^\prime) = (j,k) \\
0 & \text{otherwise}
\end{array} \right.
\]
By \eqref{eqHutchinsonBPI} we obtain that for all $i,j,k = 1, \ldots n$ and $\phi \in \xC_c^1(\R^n)$,
	\begin{align*}
	- \int_{\R^n \times \G} \hspace{-10pt} S_{jk} \nabla^S \phi(x) \cdot e_i \: dV(x,S) & = \int_{\R^n \times \G} \hspace{-5pt} \Bigg( \tilde{A}_{ijk} (x,S)  + S_{jk} \sum_{q=1}^n \tilde{A}_{qiq} (x,S)  \Bigg) \phi(x) \, dV(x,S) \\
	& = \int_{\R^n} \Bigg( \tilde{A}_{ijk} (x, P(x))  + P_{jk}(x) \sum_{q=1}^n \tilde{A}_{qiq} (x, P(x))  \Bigg) \phi(x) \, d \V(x) .
	\end{align*}
Therefore $V$ has bounded variations $\delta_{ijk} V$ (see Definition~\ref{dfnVariations}) with no singular part:
\begin{equation} \label{eqHutchinsonSyst}
\delta_{ijk} V = - \beta_{ijk} \V \quad \text{with} \quad \beta_{ijk}(x) = \tilde{A}_{ijk} (x, P(x))  + P_{jk}(x) \sum_{q=1}^n \tilde{A}_{qiq} (x, P(x)) \text{ for } \V\text{--a.e. } x
\end{equation}
Since $V$ has bounded variations, it has also a WSFF (in the sense of Definition~\ref{dfn:weakFundForm}) that we denote by $A_{ijk}^V$. Then we infer from \eqref{eqHutchinsonSyst} that for $\V$--a.e. $x$, $\tilde{A}_{ijk} (x, P(x))$ and $A_{ijk}^V (x)$ are both solutions of the linear system \eqref{eqExactSystem}, therefore they must coincide by uniqueness.
\end{proof}
\begin{remk}
In view of~\eqref{eqSmoothDivThmHutchinson} and Hutchinson's definition, an obvious consequence of the above proposition is that our weak second fundamental form coincides with the classical second fundamental form whenever $V=v(M,c)$ with $M$ a smooth $d$--submanifold of $\R^n$ and $c>0$ a constant.
\end{remk}

\subsection*{Structural differences between the WSFF and Hutchinson's generalized second fundamental form: a $1$-dimensional example in the plane}
We are now going to see that the essential difference between our WSFF and Hutchinson's tensor shows up in the structure of admissible singularities of a curvature varifold. We remark that in \cite{Hutchinson2} the following fact is shown: if an integral varifold $V$ admits a generalized second fundamental form in $\xL^p$ for $p>d$, then any tangent cone at every point of the support of $\V$ consists of a finite union of $d$--planes with constant integral multiplicities. This follows from a key monotonicity identity proved in \cite[Section 3.4]{Hutchinson2}. In particular, a junction of half-lines, seen as a $1$--varifold with multiplicity $1$, is a curvature varifold in Hutchinson's sense if and only if it is a finite union of full lines. In the example below, we compute our WSFF and Hutchinson's tensor in the case of 9 half-lines issuing from the origin through the vertices of a regular $9$-gon, obtaining that the WSFF is identically zero, while Hutchinson's tensor is not globally defined.  

Let us consider the rectifiable $1$--varifold $V$ associated with a union of $N$ half lines ${(D_l)}_{l=1}^N$ joining at $0$, of constant multiplicity $1$, and directed by a unit vector ${( u_l ) }_{l = 1}^N \subset \R^2$. On each half line $D_l$, the projection matrix $P^l = (P_{jk}^l)_{jk}$ onto the tangent line satisfies
$P_{jk}^l = (u_l \cdot e_j)(u_l \cdot e_k)$ for $j,k = 1,2$.
Let us compute the $G$-linear variation $\delta_{ijk} V$, for $\phi \in \xC_c^1 (\R^2)$ and $i,j,k = 1,2$:
\begin{align*}
\delta_{ijk}V (\phi) & = \sum_{l = 1}^N \int_{D_l} P_{jk}^l (\nabla \phi \cdot u_l) (u_l \cdot e_i)\, d \cH^1 = \sum_{l = 1}^N P_{jk}^l (u_l \cdot e_i)  \int_{D_l} (\nabla \phi \cdot u_l) \, d \cH^1   \\
 & =  - \sum_{l = 1}^N P_{jk}^l (u_l \cdot e_i) \phi(0) = - \sum_{l = 1}^N (u_l \cdot e_i)(u_l \cdot e_j)(u_l \cdot e_k) \delta_0 (\phi).
\end{align*}
Therefore $V$ has bounded variations and
\begin{equation} \label{eqVariationsJunction}
\delta_{ijk} V = - \sum_{l = 1}^N (u_l \cdot e_i)(u_l \cdot e_j)(u_l \cdot e_k) \delta_0
\end{equation}
is singular (or zero) with respect to $\V$. In particular, contrarily to Hutchinson's case, any such junctions admit a WSFF. We then wonder if some of them have zero WSFF without being a union of lines. Hence, we study the case where the WSFF of the junction is identically zero. By \eqref{eqVariationsJunction}, this property is equivalent to the following $4$ equations:
\[
\sum_{l = 1}^N (u_l \cdot e_1)^3 = 0, \quad \sum_{l = 1}^N (u_l \cdot e_2)^3 =0, \quad \sum_{l = 1}^N (u_l \cdot e_1)^2 (u_l \cdot e_2) = 0, \quad \sum_{l = 1}^N (u_l \cdot e_1)(u_l \cdot e_2)^2 = 0.
\] 
As $u_l$ are unit vectors, it is also equivalent to
\[
\sum_{l = 1}^N (u_l \cdot e_1)^3 = 0, \quad \sum_{l = 1}^N (u_l \cdot e_2)^3 =0, \quad \sum_{l = 1}^N (u_l \cdot e_1) = 0, \quad \sum_{l = 1}^N (u_l \cdot e_2) = 0.
\]
If $u_l = (\cos \alpha_l , \sin \alpha_l)$ for $\alpha_l \in ] - \pi , \pi ]$, we end up with the conditions
\[
\sum_{l = 1}^N \exp (i \alpha_l) = 0 \quad \text{and} \quad \sum_{l = 1}^N \exp (3 i \alpha_l) = 0 \: .
\]
By taking $N=9$ and $\alpha_l = \frac{2 l \pi}{N}$, we obtain a regular junction of $9$ half--lines (which is not a union of full lines), whose weak second fundamental form is identically zero. At the same time, Hutchinson's tensor cannot be globally defined, otherwise it should also be identically zero by Proposition~\ref{prop:CoincideOnRectifiable}, and thus we would reach a contradiction with the admissible singularities of a varifold with zero curvature (see \cite[Corollary 3.5]{Hutchinson2} and the discussion following Definition~\ref{dfnHutchinsonWeakSecondFundamentalForm}).


\begin{remk}[Interpretation in terms of weak differentiability of the approximate tangent plane]
Theorem $15.6$ of \cite{menne} states that an integral varifold is a curvature varifold in Hutchinson's sense if and only if $\delta V$ is a Radon measure absolutely continuous with respect to $\|V\|$ and the approximate tangent plane $P(x)$ is a weakly $V$-differentiable function in the sense of \cite[Definition 8.3]{menne}. 
Thanks to this characterization, in \cite{MenneSharrer2018} the notion of curvature varifold is extended to diffuse-type varifolds.
\end{remk}

\section{Properties of the weak second fundamental form}

\subsection{Structural properties}
\label{sectionStructuralProperties}
In the smooth case, the functions $\beta_{ijk}^V$ and $A_{ijk}^V$ satisfy some structural properties. We check in the next proposition which ones are still valid in our extended setting.

\begin{prop} \label{propStructuralProperties}
Let $\Omega \subset \R^n$ be an open set and $V$ be a $d$--varifold in $\Omega$ with bounded variations. Then, for all $i, j, k = 1, \ldots, n$,
\begin{enumerate}
\item $\displaystyle \delta_{ijk} V = \delta_{ikj} V$, $\beta_{ijk}^V = \beta_{ikj}^V$ and $A_{ijk}^V = A_{ikj}^V$,
\item  $\displaystyle \sum_q \beta_{iqq} = d H_i^V$ and $\displaystyle \sum_q \beta_{qiq}^V = H_i^V$ where $H^V = (H_1^V, \ldots, H_n^V) = - \frac{\delta V}{\|V\|}$ is the generalized mean curvature vector,
\item $\displaystyle \sum_q A_{q \cdot q}^V = (I + c^V)^{-1} H^V$ and $\displaystyle \sum_q A_{\cdot qq}^V = d c^V (I+c^V)^{-1} H^V$.
\end{enumerate}
\end{prop}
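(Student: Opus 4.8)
The plan is to establish the three items in sequence, exploiting the explicit solution formula \eqref{aijkexplicit} from Lemma~\ref{lemma:systemeInversible2} to pass from the $\beta$-tensor to the $A$-tensor, so that everything reduces to verifying identities about the $\delta_{ijk}V$ and the matrix $c^V$.

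\textbf{Item (1).} First I would observe that $S_{jk} = S_{kj}$ for every $S \in \G$ (a projection matrix is symmetric), so swapping $j$ and $k$ in the integrand defining $\delta_{ijk}V(\phi)$ leaves it unchanged; hence $\delta_{ijk}V = \delta_{ikj}V$. Decomposing both sides as in \eqref{eq:deltaijk-decomposed} and using uniqueness of the Radon--Nikodym decomposition yields $\beta_{ijk}^V = \beta_{ikj}^V$ (and equality of the singular parts). For the $A$-tensor, I would note that $c_{jk}^V = \int_\G S_{jk}\,d\nu_x(S)$ is symmetric in $(j,k)$ for the same reason, and that in formula \eqref{aijkexplicit} the quantity $h_i = \sum_q b_{qiq}$ depends only on $i$; therefore $a_{ijk} = b_{ijk} - c_{jk}[(I+c)^{-1}h]_i$ is manifestly symmetric in $(j,k)$ once $b$ and $c$ are. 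This is essentially the argument already used inside the proof of Proposition~\ref{propWellDefined}.

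\textbf{Item (2).} I would choose the test vector field appropriately: summing $\delta_{ijj}V$ over $j$ gives $\int_{\Omega\times\G}\big(\sum_j S_{jj}\big)\nabla^S\phi(y)\cdot e_i\,dV = \int \tr(S)\,\nabla^S\phi\cdot e_i\,dV = d\,\delta V(\phi\,e_i)$, since $\tr(S) = d$; comparing the absolutely continuous parts and recalling $\delta V = -H^V\|V\|+\delta V_s$ gives $\sum_q \beta_{iqq}^V = dH_i^V$. For the other combination I would instead use the identity $\sum_q S_{jk}$-type manipulation: $\sum_q \delta_{qiq}V(\phi) = \int \sum_q S_{iq}\,\nabla^S\phi\cdot e_q\,dV = \int (\Pi_S\nabla\phi)_i\,dV$ wait --- more carefully, $\sum_q S_{iq}(\nabla^S\phi\cdot e_q) = \sum_q S_{iq}(\Pi_S\nabla\phi)_q = (\Pi_S\Pi_S\nabla\phi)_i = (\Pi_S\nabla\phi)_i = \nabla^S\phi\cdot e_i$, using $\Pi_S^2 = \Pi_S$. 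Hence $\sum_q\delta_{qiq}V = \delta V\cdot e_i$ and therefore $\sum_q\beta_{qiq}^V = H_i^V$.

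\textbf{Item (3).} Here I would apply the explicit solution formula. Fixing the index $i$, summing \eqref{aijkexplicit} with $b = \beta^V$, $c = c^V$: for $\sum_q A_{qiq}^V$ one sets $j=q$, $k=q$ and sums, obtaining $\sum_q A_{qiq}^V = \sum_q\beta_{qiq}^V - \sum_q c_{qq}^V[(I+c^V)^{-1}h]_i = h_i - \tr(c^V)\cdot 0$-type care is needed; but in fact the vector $h$ in the formula is precisely $h_i = \sum_q \beta_{qiq}^V = H_i^V$ by item (2), and plugging $a = A^V$ back into the original system \eqref{eqExactSystem} and summing over the appropriate indices is cleaner: from $A_{ijk}^V + c_{jk}^V\sum_l A_{lil}^V = \beta_{ijk}^V$, set $j=k=q$ and sum over $q$ to get $\sum_q A_{iqq}^V + \tr(c^V)\sum_l A_{lil}^V = \sum_q\beta_{iqq}^V = dH_i^V$, while setting $i$ fixed, $j=k=q$ summed differently... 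The slickest route is: let $a^i := \sum_l A_{lil}^V$. Summing $A_{qiq}^V + c_{iq}^V\sum_l A_{lql}^V = \beta_{qiq}^V$ over $q$ gives $a^i + \sum_q c_{iq}^V a^q = H_i^V$, i.e. $(I+c^V)a = H^V$ in vector form, hence $a = \sum_q A_{q\cdot q}^V = (I+c^V)^{-1}H^V$, which is invertible by Lemma~\ref{lemma:systemeInversible2}. Finally, summing $A_{iqq}^V + c_{qq}^V\sum_l A_{lil}^V = \beta_{iqq}^V$ over $q$ gives $\sum_q A_{iqq}^V = dH_i^V - \tr(c^V)\,a_i^i$; but $\tr(c^V) = \int\tr(S)\,d\nu_x = d$, and combining with item (2) more directly: $\sum_q A_{\cdot qq}^V = \sum_q\beta_{\cdot qq}^V - \big(\sum_q c_{qq}^V\big)(I+c^V)^{-1}H^V \cdot$(via \eqref{aijkexplicit} summed) $= dH^V - d(I+c^V)^{-1}H^V = d\big(I - (I+c^V)^{-1}\big)H^V = dc^V(I+c^V)^{-1}H^V$, using the algebraic identity $I - (I+c^V)^{-1} = c^V(I+c^V)^{-1}$ (which is exactly the identity invoked in the proof of Lemma~\ref{lemma:systemeInversible2}).

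\textbf{Main obstacle.} The calculations are all routine; the only point requiring a little care is bookkeeping the index contractions in item (3) --- in particular distinguishing $\sum_q A_{q\cdot q}^V$ from $\sum_q A_{\cdot qq}^V$ and correctly identifying which contraction of $c^V$ produces $\tr(c^V) = d$ versus the matrix $c^V$ itself. Everything else follows from symmetry of projection matrices, $\tr(S) = d$, idempotency $\Pi_S^2 = \Pi_S$, the explicit formula \eqref{aijkexplicit}, and uniqueness of the Radon--Nikodym decomposition.
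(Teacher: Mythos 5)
Your proposal is correct and follows essentially the same route as the paper's proof: item (1) from the symmetry $S_{jk}=S_{kj}$, item (2) from $\tr(S)=d$ and the idempotency $\Pi_S^2=\Pi_S$ followed by Radon--Nikodym differentiation, and item (3) from the explicit solution formula \eqref{aijkexplicit} (equivalently, summing the linear system \eqref{eqExactSystem} over the appropriate index) together with the identity $I-(I+c^V)^{-1}=c^V(I+c^V)^{-1}$ and $\tr(c^V)=d$. The only difference is presentational: the paper substitutes directly into \eqref{aijkexplicit}, whereas you also contract the system itself to get $(I+c^V)a=H^V$; these are the same computation.
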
 

\begin{proof}
The first assertion follows from $P_{jk} = P_{kj}$ for every $P \in \G$. In order to prove the second assertion, for $\phi \in \xC_c^1 (\Omega)$ we compute 
\[
\sum_q \delta_{\cdot qq} V (\phi) = \int_{\Omega \times \G} \underbrace{ \sum_q S_{qq} }_{= d} \nabla^S \phi(y) \, dV(y,S) = d \delta V (\phi)
\]
and
\[
\sum_q \delta_{q \cdot q} V(\phi) = \int_{\Omega \times \G} \underbrace{ \sum_q S_{\cdot q} \left( \nabla^S \phi(y) \right)_q }_{ \displaystyle = S \nabla^S \phi(y)} \, dV(y,S) = \int_{\Omega \times \G} \nabla^S \phi(y) \, dV(y,S) = \delta V (\phi) \: . 
\]
Consequently, we deduce that $\sum_q \delta_{\cdot qq} V = d \delta V$ and  $\sum_q \delta_{q \cdot q} V = \delta V$. By taking the Radon-Nikodym derivative with respect to $\V$, the analogous equalities hold for $\beta^V$ and $H^V$. The last assertions are then direct consequences of formula \eqref{aijkexplicit}:
\begin{align*}
\sum_q A_{q \cdot q}^V = \sum_q \left( \beta_{q \cdot q}^V - c_{\cdot q} ( (I+c)^{-1} H^V )_ q \right) = H^V - c(I+c)^{-1} H^V
= (I+c)^{-1} H^V
\end{align*}
and
\begin{align*}
\sum_q A_{\cdot qq}^V = \sum_q \beta_{\cdot qq}^V - \sum_q c_{qq} (I+c)^{-1} H^V = d H^V - d (I+c)^{-1} H^V = d c (I+c)^{-1} H^V\,.
\end{align*}
\end{proof}
We recall that the following identities were proved in Proposition~\ref{propStructuralProperties}:
\[
\sum_q A_{q \cdot q}^V = (I+c^V)^{-1} H^V \quad \text{and} \quad \sum_q A_{\cdot qq}^V = d c^V (I+c^V)^{-1} H^V \:.
\]
Therefore, the identities 
\begin{equation} \label{eqStructuralAijk2}
 \sum_q A_{q \cdot q}^V = H^V \quad \text{and} \quad \sum_q A_{\cdot qq}^V = 0
\end{equation}
are true for an integral varifold, but not for a general varifold. Owing to
$(I+c^V)^{-1} = I - (I+c^V)^{-1} c^V$, we get that \eqref{eqStructuralAijk2} hold if and only if $c^V H^V = 0$. Nevertheless, in the following proposition we show that the identities \eqref{eqStructuralAijk2} are satisfied by $A_{ijk}^{V, \perp}$ (see Definition \ref{dfnBetaAijkPerp}).

\begin{prop} \label{propStructuralPropertiesPerp}
Let $\Omega \subset \R^n$ be an open set and $V$ be a $d$--varifold in $\Omega$ with bounded variations. Then,
\[
\sum_q A_{q \cdot q}^{V, \perp } = H^{V, \perp} \quad \text{and} \quad  \sum_q A_{\cdot qq}^{V, \perp} = 0\,,
\]
where we have set $H^{V, \perp} = (I - c^V) H^V$.
\end{prop}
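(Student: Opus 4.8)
The plan is to reduce everything to the explicit solution formula \eqref{aijkexplicit} of Lemma~\ref{lemma:systemeInversible2} and then to mimic, almost verbatim, the index manipulations carried out in the proof of Proposition~\ref{propStructuralProperties}(3). Since, by Definition~\ref{dfnBetaAijkPerp}, $A^{V,\perp}_{ijk}(x)$ is the solution of \eqref{eq:mainSystem2} with $c_{jk}=c^V_{jk}(x)$ and $b_{ijk}=\beta^{V,\perp}_{ijk}(x)$, formula \eqref{aijkexplicit} gives, for $\V$-almost every $x$,
\[
A^{V,\perp}_{ijk}=\beta^{V,\perp}_{ijk}-c^V_{jk}\big[(I+c^V)^{-1}h^\perp\big]_i,\qquad h^\perp_i:=\sum_q\beta^{V,\perp}_{qiq}.
\]

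First I would compute the vector $h^\perp$. Writing \eqref{eq:betaperp1} in the compact form $\beta^{V,\perp}_{ijk}=\beta^V_{ijk}-c^V_{jk}(c^VH^V)_i$ — which uses $\sum_q\beta^V_{q\cdot q}=H^V$ from Proposition~\ref{propStructuralProperties}(2) — one gets
\[
h^\perp_i=\sum_q\beta^V_{qiq}-\sum_q c^V_{iq}(c^VH^V)_q=H^V_i-\big[(c^V)^2H^V\big]_i,
\]
so that $h^\perp=(I-(c^V)^2)H^V=(I+c^V)(I-c^V)H^V=(I+c^V)H^{V,\perp}$, the last equalities holding because $c^V$ commutes with itself. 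This is the crux of the matter: the correction term in \eqref{eq:betaperp1} is engineered precisely so that $(I+c^V)^{-1}h^\perp=H^{V,\perp}$.

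For the first structural identity I would then repeat the computation of Proposition~\ref{propStructuralProperties}(3): summing the displayed formula for $A^{V,\perp}_{q\,\cdot\,q}$ over $q$, using $c^V_{pq}=c^V_{qp}$ and the matrix identity $(I+c^V)^{-1}=I-c^V(I+c^V)^{-1}$, yields
\[
\sum_q A^{V,\perp}_{q\cdot q}=h^\perp-c^V(I+c^V)^{-1}h^\perp=(I+c^V)^{-1}h^\perp=H^{V,\perp},
\]
by the previous step. For the second identity I would compute $\sum_q\beta^{V,\perp}_{iqq}=\sum_q\beta^V_{iqq}-(\tr c^V)(c^VH^V)_i=dH^V_i-d(c^VH^V)_i=dH^{V,\perp}_i$, using $\sum_q\beta^V_{iqq}=dH^V_i$ from Proposition~\ref{propStructuralProperties}(2) together with $\tr c^V=\int_{\G}\tr S\,d\nu_x(S)=d$; then summing the formula for $A^{V,\perp}_{iqq}$ over $q$ and invoking $\tr c^V=d$ and the first identity gives
\[
\sum_q A^{V,\perp}_{iqq}=\sum_q\beta^{V,\perp}_{iqq}-(\tr c^V)\big[(I+c^V)^{-1}h^\perp\big]_i=dH^{V,\perp}_i-d\,H^{V,\perp}_i=0.
\]

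The only real care needed is the bookkeeping of the three indices — which one is summed and which are free — and keeping track of whether $c^V$ multiplies on the left or on the right; there is no genuine analytic difficulty, since everything is a finite-dimensional linear-algebra manipulation layered on top of the already established Proposition~\ref{propStructuralProperties} and Lemma~\ref{lemma:systemeInversible2}.
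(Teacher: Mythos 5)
Your proof is correct and follows essentially the same route as the paper's: both compute $\sum_q\beta^{V,\perp}_{q\cdot q}=(I-(c^V)^2)H^V$ and then feed this into the explicit solution formula \eqref{aijkexplicit}, using the symmetry of $c^V$, the identity $(I+c^V)^{-1}=I-c^V(I+c^V)^{-1}$, and $\tr c^V=d$. The only cosmetic difference is that the paper first reduces to the closed form $A^{V,\perp}_{ijk}=\beta^V_{ijk}-c^V_{jk}H^V_i$ (its equation \eqref{eqExplicitAijkPerp}) before summing over $q$, whereas you sum directly via the observation $(I+c^V)^{-1}h^\perp=H^{V,\perp}$; the two computations are interchangeable.
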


\begin{proof}
Indeed, dropping the $V$--exponent for more simplicity, and recalling that $\sum_q \beta_{q \cdot q} = H$, we have
\begin{align*}
\sum_q \beta_{qiq}^\perp = \sum_q \beta_{qiq} - \sum_q c_{iq} \left( c \sum_l \beta_{l \cdot l} \right)_q 
= \sum_q \beta_{qiq} - c \left( c \sum_l \beta_{l \cdot l} \right)_i  = ((I - c^2)H )_i
\end{align*}
and then, 
\begin{align}
A_{ijk}^{\perp} & = \beta_{ijk}^\perp - c_{jk} \left( (I+c)^{-1} \sum_q \beta_{q\cdot q}^\perp \right)_i \nonumber \\
& = \beta_{ijk} - c_{jk} \left( c \sum_q \beta_{q \cdot q} \right)_i - c_{jk} \left( (I+c)^{-1} (I-c^2)H \right)_i \nonumber \\
& = \beta_{ijk} - c_{jk} \left( c H + (I-c) H \right)_i \nonumber \\
& = \beta_{ijk} - c_{jk} H_i \: , \label{eqExplicitAijkPerp}
\end{align}
We can now compute
\begin{align*}
\sum_q A_{qiq}^\perp = \sum_q \beta_{q i q} - \sum_q c_{iq} H_q  = H_i - (cH)_i = H_i^\perp
\end{align*}
and
\begin{align*}
\sum_q A_{iqq}^\perp = \sum_q \beta_{iqq} - \sum_q c_{qq} H_i = d H_i - d H_i =0\,,
\end{align*}
as wanted.
\end{proof}

\subsection{Convergence and compactness}
\label{sectionCvCompactness}

We start with a direct consequence of Allard's compactness theorem, see also \cite{BuetLeonardiMasnou}.
\begin{prop}
Let $\Omega \subset \R^n$ be an open set and let $(V_h)_h$ be a sequence of $d$--varifolds in $\Omega$, with bounded variations. Assume that for $i,j,k = 1 \ldots n$,
\begin{equation} \label{eqAllardTypeAssumption}
\sup_h \{ \| V_h \| (\Omega) + | \delta_{ijk} V_h |(\Omega) \} < + \infty \: ,
\end{equation}
then, there exists a subsequence $(V_{h_l})_l$ weakly--$\ast$ converging to a $d$--varifold $V$ with bounded variations satisfying
\[
| \delta_{ijk} V| (\Omega ) \leq \liminf_l | \delta_{ijk} V_{h_l}| (\Omega ) \: .
\]
Moreover, if the $V_h$ are integral varifolds, then $V$ is integral.
\end{prop}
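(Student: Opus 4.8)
The plan is to combine weak-$\ast$ sequential compactness of bounded sets of Radon measures with the lower semicontinuity of the total variation under weak-$\ast$ convergence, and then, for the last assertion, to invoke Allard's compactness theorem for integral varifolds.

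First I would exploit the uniform mass bound $\sup_h\|V_h\|(\Omega)<+\infty$: since $\Omega\times\G$ is a locally compact separable metric space, a subsequence of $(V_h)_h$ weakly-$\ast$ converges to some non-negative Radon measure $V$ on $\Omega\times\G$, which is therefore a $d$--varifold, with $\|V\|(\Omega)\le\liminf_l\|V_{h_l}\|(\Omega)<+\infty$. Applying the same argument to the finitely many signed Radon measures $\delta_{ijk}V_{h_l}$ on $\Omega$ (which by hypothesis satisfy $|\delta_{ijk}V_{h_l}|(\Omega)\le C$), and passing to a further subsequence diagonally over the triples $(i,j,k)$, I may assume, after relabelling, that $\delta_{ijk}V_{h_l}\xrightharpoonup[]{\: \ast \:}\mu_{ijk}$ for some signed Radon measures $\mu_{ijk}$ on $\Omega$, with $|\mu_{ijk}|(\Omega)\le\liminf_l|\delta_{ijk}V_{h_l}|(\Omega)$ by lower semicontinuity of the total variation.

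The crux is to identify $\mu_{ijk}$ with $\delta_{ijk}V$. Fix $\phi\in\xC_c^1(\Omega)$. Since $\phi\in\xC_c^0(\Omega)$, weak-$\ast$ convergence of the measures $\delta_{ijk}V_{h_l}$ gives $\delta_{ijk}V_{h_l}(\phi)=\int_\Omega\phi\,d(\delta_{ijk}V_{h_l})\to\int_\Omega\phi\,d\mu_{ijk}$. On the other hand, the function $(y,S)\mapsto S_{jk}\,\nabla^S\phi(y)\cdot e_i$ is continuous on $\Omega\times\G$ (recall $\nabla^S\phi(y)=\Pi_S\nabla\phi(y)$, with $\nabla\phi$ continuous and $S\mapsto\Pi_S$, $S\mapsto S_{jk}$ continuous on the compact manifold $\G$) and is supported in $\supp\phi\times\G$, hence it belongs to $\xC_c^0(\Omega\times\G)$; therefore $V_{h_l}\xrightharpoonup[]{\: \ast \:}V$ yields $\delta_{ijk}V_{h_l}(\phi)=\int_{\Omega\times\G}S_{jk}\,\nabla^S\phi\cdot e_i\,dV_{h_l}\to\int_{\Omega\times\G}S_{jk}\,\nabla^S\phi\cdot e_i\,dV=\delta_{ijk}V(\phi)$. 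Comparing the two limits, $\delta_{ijk}V(\phi)=\mu_{ijk}(\phi)$ for every $\phi\in\xC_c^1(\Omega)$, so the distribution $\delta_{ijk}V$ is represented by the Radon measure $\mu_{ijk}$; in particular $V$ has locally bounded variations and $|\delta_{ijk}V|(\Omega)=|\mu_{ijk}|(\Omega)\le\liminf_l|\delta_{ijk}V_{h_l}|(\Omega)$, which is the claimed estimate.

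Finally, assume the $V_h$ are integral. From $\sum_j\delta_{ijj}V_h=d\,\delta V_h\cdot e_i$ (a consequence of $\tr S=d$) and \eqref{eqAllardTypeAssumption} we get $\sup_h(\|V_h\|(\Omega)+|\delta V_h|(\Omega))<+\infty$, hence in particular $\sup_h(\|V_h\|(W)+|\delta V_h|(W))<+\infty$ for every $W\subset\subset\Omega$. Allard's compactness theorem for integral varifolds (see \cite{Allard72,simon}) then furnishes a further subsequence whose weak-$\ast$ limit is an integral varifold; since weak-$\ast$ limits are unique, this limit coincides with $V$, so $V$ is integral. The only non-elementary ingredient in the whole argument is this last invocation of Allard's theorem — everything else is soft functional analysis (Banach--Alaoglu together with lower semicontinuity of the total variation) — so I do not anticipate any genuine obstacle; the single point that deserves care is checking that $S_{jk}\,\nabla^S\phi\cdot e_i$ is an admissible continuous compactly supported test function on $\Omega\times\G$, so that the weak-$\ast$ convergence of the varifolds can be applied to it.
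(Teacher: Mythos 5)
Your proof is correct and follows essentially the same route as the paper: reduce the first-variation bound to the assumption via $\sum_j\delta_{ijj}V_h = d\,\delta V_h\cdot e_i$, extract a weakly-$\ast$ convergent subsequence, identify $\delta_{ijk}V$ with the weak-$\ast$ limit of the measures $\delta_{ijk}V_{h_l}$ (using that $(y,S)\mapsto S_{jk}\,\nabla^S\phi(y)\cdot e_i$ is an admissible test function), invoke lower semicontinuity of the total variation, and apply Allard's compactness theorem for the integrality of the limit. You simply spell out the identification step and the diagonal extraction that the paper's very terse proof leaves implicit.
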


\begin{proof}
Let $(e_1, \ldots , e_n)$ be the canonical basis of $\R^n$, then
\[
\delta V = \sum_{i=1}^n \sum_{j=1}^n \delta_{ijj} V \, e_i \: .
\]
Therefore \eqref{eqAllardTypeAssumption} implies
\[
\sup_h \{ \| V_h \| (\Omega) + | \delta V_h |(\Omega) \} < + \infty \: ,
\]
and it only remains to apply Allard's compactness theorem (\cite{Allard72}) and use the lower semicontinuity of $|\delta_{ijk} V |(\Omega)$.
\end{proof}

Next, we deal with some convergence properties of the WSFF.
Let $\Omega \subset \R^n$ be an open set and let $(V_h)$ be a sequence of $d$--varifolds weakly--$\ast$ converging to a $d$--varifold $V$. Assuming that the variations of $V_h$ are bounded and that $\sup_h \left| \delta_{ijk} V_h \right| (\Omega)  < +\infty$ is enough to conclude that $V$ has bounded variations. Thus $V_h$ (resp. $V$) admits a WSFF (in the sense of Definition~\ref{dfn:weakFundForm}), denoted by $A_{ijk}^h$ (resp. $A_{ijk}$). The following theorem focuses on the convergence of the measures $A_{ijk}^h V_h$, as $h\to\infty$.

\begin{prop}[Convergence of the WSFF]
Let $\Omega \subset \R^n$ be an open set and let $(V_h)_h$ be a sequence of $d$--varifolds weakly--$\ast$ converging to a rectifiable $d$--varifold $V$. Assume that there exists constants $C_0>0$  and $p>1$, such that $\delta_{ijk} V_h << \| V_h \|$ and
\begin{equation} \label{eqConvergence0}
\int_{\Omega} |A_{ijk}^h(x)|^p \, d \| V_h \| (x) \le C_0
\end{equation}
for all $h\in \N$. 
Then $\delta_{ijk}V$ is absolutely continuous with respect to $\V$, and the measures
$A_{ijk}^h V_h$ weakly--$\ast$ converge to the measure $A_{ijk} V$ as $h\to\infty$.
\end{prop}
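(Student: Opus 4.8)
The plan is to prove the two conclusions in sequence, the first being an immediate consequence of the second together with the uniqueness statement from Lemma~\ref{lemma:systemeInversible2}. First I would establish the weak-$\ast$ convergence of the measures $A_{ijk}^h \|V_h\|$. By hypothesis \eqref{eqConvergence0} the sequence $(A_{ijk}^h \|V_h\|)_h$ has uniformly bounded total variation (by H\"older, $|A_{ijk}^h|\|V_h\|(\Omega) \le (C_0)^{1/p} \|V_h\|(\Omega)^{1-1/p}$, and $\|V_h\|(\Omega)$ is bounded since $V_h \xrightharpoonup{\ast} V$), so up to a subsequence these vector measures converge weakly-$\ast$ to some Radon measure $\mu_{ijk}$ on $\Omega$. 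The key point is to identify $\mu_{ijk}$. The natural route is: on one hand, $\delta_{ijk}V_h(\phi) = -\int \phi\, A_{ijk}^h\, d\|V_h\| \to -\mu_{ijk}(\phi)$ for every $\phi \in \xC_c^1(\Omega)$ because $A_{ijk}^h\|V_h\| \xrightharpoonup{\ast}\mu_{ijk}$; on the other hand, from the very definition of $\delta_{ijk}V_h$ and the weak-$\ast$ convergence $V_h \xrightharpoonup{\ast} V$ in $\Omega\times\G$ we get $\delta_{ijk}V_h(\phi) = \int_{\Omega\times\G} S_{jk}\,\nabla^S\phi(y)\cdot e_i\, dV_h(y,S) \to \delta_{ijk}V(\phi)$, since $(y,S) \mapsto S_{jk}\nabla^S\phi(y)\cdot e_i$ is a continuous compactly supported function on $\Omega\times\G$. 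Combining the two limits yields $\delta_{ijk}V = -\mu_{ijk}$; in particular $\delta_{ijk}V \ll \|V\|$ follows once we show $\mu_{ijk} \ll \|V\|$.

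The absolute continuity is where the rectifiability of $V$ and the exponent $p>1$ enter. The argument is a standard lower-semicontinuity / equi-integrability estimate: for any open $U \subset\subset \Omega$ and any $\phi \in \xC_c^0(U)$ with $0\le\phi\le 1$,
\[
|\mu_{ijk}(\phi)| = \lim_h \Big| \int_U \phi\, A_{ijk}^h\, d\|V_h\| \Big| \le \liminf_h \Big(\int \phi^{p'}\, d\|V_h\|\Big)^{1/p'} \Big(\int |A_{ijk}^h|^p\, d\|V_h\|\Big)^{1/p} \le C_0^{1/p}\,\|V\|(\supp\phi)^{1/p'},
\]
where $p' = p/(p-1)$, and the last step uses $\|V_h\| \xrightharpoonup{\ast}\|V\|$ together with $\phi^{p'}\le \mathbf 1_{\supp\phi}$. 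Taking the supremum over such $\phi$ supported in a small ball gives $|\mu_{ijk}|(B) \le C_0^{1/p}\|V\|(B)^{1/p'}$ for every ball $B$, which forces $\mu_{ijk} \ll \|V\|$ since $\|V\|$ has no atoms on $V$ being rectifiable—more precisely it gives $|\mu_{ijk}|(E)=0$ whenever $\|V\|(E)=0$, because one covers $E$ by finitely many small balls of arbitrarily small $\|V\|$-measure and uses subadditivity with the exponent $1/p' < 1$ bounded (here $\|V\|(\Omega)<\infty$ keeps the sum finite). Hence $\mu_{ijk} = \theta_{ijk}\|V\|$ for some $\theta_{ijk} \in L^1(\|V\|)$, and since $\delta_{ijk}V = -\mu_{ijk}$ we conclude that $V$ has locally bounded variations with $\beta_{ijk}^V = \theta_{ijk}$ and no singular part.

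It remains to upgrade the convergence of $A_{ijk}^h\|V_h\|$ to the convergence of $A_{ijk}^h V_h$ as varifolds on $\Omega\times\G$, and to pin down the limit as $A_{ijk}V$. For the varifold-level statement I would test against $\psi \in \xC_c^0(\Omega\times\G)$: one needs $\int \psi\, A_{ijk}^h\, dV_h \to \int\psi\, A_{ijk}\, dV$. Since $V$ is rectifiable, $V = \|V\|\otimes\delta_{P(x)}$, so the candidate limit is $\int_\Omega \psi(x,P(x))\, A_{ijk}(x)\, d\|V\|(x)$; one approximates $\psi$ by functions of the form $\sum_r \chi_r(x)\,\lambda_r(S)$ and reduces to the already-established convergence of $A_{ijk}^h\|V_h\|$ after controlling the Grassmann-variable oscillation of $V_h$ near the limit tangent field (this is where one invokes that weak-$\ast$ convergence of $V_h$ to a rectifiable $V$ forces the tangent planes of $V_h$ to concentrate near $P(x)$, a fact available from the rectifiability structure). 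Finally, the identification $A_{ijk} = A_{ijk}^V$: we have shown $\delta_{ijk}V = -\beta_{ijk}^V\|V\|$ with $\beta_{ijk}^V = \theta_{ijk}$, and passing to the limit in the linear system \eqref{eqExactSystem} satisfied by $A_{ijk}^h$ (using $c^{V_h} \to c^V$ in the appropriate weak sense and the uniform bound \eqref{eqUniformBoundAijkBetaijk} from Lemma~\ref{lemma:systemeInversible2}) shows the limit tensor solves \eqref{eqExactSystem} with data $\beta_{ijk}^V$, hence coincides with $A_{ijk}^V$ by uniqueness. Since the limit is thus independent of the subsequence, the whole sequence converges. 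The main obstacle I anticipate is precisely this last step—controlling the joint convergence in the $(x,S)$ variables and the convergence $c^{V_h}\to c^V$—because the $A_{ijk}^h$ are only bounded in $L^p$, not equi-integrable a priori, so one must lean on the rectifiability of the limit to gain compactness in the Grassmann direction; the $\|V\|$-level statement and the absolute continuity are comparatively routine.
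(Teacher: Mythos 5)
There is a genuine error at the start of your identification step, and the hardest part of the argument is left essentially unaddressed.

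First, you write $\delta_{ijk}V_h(\phi) = -\int\phi\,A^h_{ijk}\,d\|V_h\|$. This is false: by \eqref{eq:deltaijk-decomposed} and the hypothesis $\delta_{ijk}V_h\ll\|V_h\|$, the density of $-\delta_{ijk}V_h$ with respect to $\|V_h\|$ is $\beta^h_{ijk}$, not $A^h_{ijk}$; the two are related through the linear system \eqref{eqExactSystem}, namely $\beta^h_{ijk} = A^h_{ijk} + c^{V_h}_{jk}\sum_q A^h_{qiq}$. Consequently your identification $\delta_{ijk}V=-\mu_{ijk}$ (with $\mu_{ijk}$ the weak--$\ast$ limit of $A^h_{ijk}\|V_h\|$) and the resulting claim $\beta^V_{ijk}=\theta_{ijk}$ are wrong as stated: the two measures differ by the coupling term. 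The absolute continuity $\delta_{ijk}V\ll\V$ can be rescued by running your H\"older estimate on $\beta^h_{ijk}$ instead, using the bound $\sup_{ijk}|\beta^h_{ijk}(x)|\le C_n\sup_{ijk}|A^h_{ijk}(x)|$ that follows from \eqref{eq:mainSystem2}; this is exactly what the paper does, concluding via the lower semicontinuity statement of Example 2.36 in \cite{ambrosio}.

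Second --- and this is the real content of the proposition --- one must pass to the limit in the coupled system, i.e.\ in the term $c^{V_h}_{jk}(x)\sum_q A^h_{qiq}(x)$, which is a product of two merely weakly converging quantities. You correctly flag this as the main obstacle, but your proposed resolution (``$c^{V_h}\to c^V$ in the appropriate weak sense'' plus an approximation of test functions by tensor products and a concentration claim for the tangent planes of $V_h$) is not an argument. The paper's mechanism is to lift the tensor to the Grassmann bundle: since $\int_{\Omega\times\G}|A^h_{ijk}|^p\,dV_h=\int_\Omega|A^h_{ijk}|^p\,d\|V_h\|$, one extracts a weak--$\ast$ limit $g_{ijk}V$ of the measures $A^h_{ijk}V_h$ on $\Omega\times\G$ with $g_{ijk}\in L^p(V)$ (alongside the limit $f_{ijk}\V$ of $A^h_{ijk}\|V_h\|$), rewrites the coupling term as $\sum_q\int A^h_{qiq}(x)\phi(x)S_{jk}\,dV_h(x,S)$ so that the limit can be taken against the continuous test function $\phi(x)S_{jk}$, shows $g_{ijk}(x,S)=f_{ijk}(x)$ for $V$-a.e.\ $(x,S)$ via the Young-measure representation of the rectifiable limit $V$, and finally identifies $f_{ijk}=A^V_{ijk}$ by the uniqueness part of Lemma~\ref{lemma:systemeInversible2}. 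Without this device (or an equivalent one) your plan does not close.
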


\begin{proof}
Let $M$ be a $d$--rectifiable set endowed with a multiplicity function $\theta$, such that $V=v(M,\theta)$. 
Thanks to \eqref{eq:mainSystem2} we have
\[
\sup_{i,j,k} |\beta_{ijk}^h(x)| \le C_n \sup_{i,j,k} |A_{ijk}^h(x)| 
\] 
for $\| V_h \|$-almost all $x\in \Omega$ and $h\in \N$, 
with $C_n$ only depending on the dimension $n$. Therefore, by \eqref{eqConvergence0} and H\"older's inequality we infer that 
\begin{equation} \label{eqConvergence1}
\left| \delta V_{ijk}^h \right| (K) = \left|\int_K \beta_{ijk}^h\, d\V\right| \leq \V(K)^\frac{1}{p'}  \left(\int_{\Omega} |\beta_{ijk}^h|^p \, d \| V_h \| \right)^\frac{1}{p} \le C_0 C_n \V(K)^\frac{1}{p'}
\end{equation}
for every compact set $K\subset \Omega$. 
Combining the fact that $\delta_{ijk} V_h (\phi)$ converges to $\delta_{ijk} V (\phi)$ for all $\phi \in \xC_c^1 (\Omega)$ with the above uniform control \eqref{eqConvergence1}, we obtain that $\delta_{ijk} V_h$ weakly--$\ast$ converges to $\delta_{ijk} V$. We can now apply Example $2.36$ in \cite{ambrosio} to conclude that $\delta_{ijk} V << \V$ and moreover
\[
\int_{\Omega} |\beta_{ijk}(x)|^p \, d \| V \| (x) \leq \liminf_h \int_{\Omega} |\beta_{ijk}^h(x)|^p \, d \| V_h \| (x) .
\]
On the other hand, from \eqref{eqConvergence0} we similarly deduce that $\displaystyle \sup_h \int_K |A_{ijk}^h| \, d \|V_h \| < +\infty$ for every fixed compact $K\subset \Omega$. Therefore there exists a subsequence (which we do not relabel) that weakly--$\ast$ converges to a limit Radon measure which is absolutely continuous with respect to $\V$. In other words, there exists $f_{ijk} \in  \xL^p (\V)$ such that $A_{ijk}^h \| V_h \|$ weakly--$\ast$ converges to $f_{ijk} \V$. Note that
\[
\sup_h \int_{\Omega \times \G} |A_{ijk}^h(x)|^p \, d  V_h  (x, P) = \sup_h \int_{\Omega} |A_{ijk}^h(x)|^p \, d \| V_h \| (x) < +\infty ,
\]
and thus (up to extraction) there exists $g_{ijk} \in \xL^p (V)$ such that $A_{ijk}^h V_h$ weakly--$\ast$ converges to $g_{ijk} V$. 

We now show that $f_{ijk}(x)=g_{ijk}(x,P(x))$ for $\V$-almost every $x$. First, for every $\phi \in \xC_c^0(\Omega)$, by taking the limit as $h \to \infty$ in the equality
\[
\int_\Omega A_{ijk}^h(x) \phi(x) \, d \| V_h \| (x) = \int_{\Omega \times \G} A_{ijk}^h(x) \phi(x) \, d  V_h (x,S)\,,
\]
and by representing the limit measure $V$ via the Young measures $\{\nu_x\}$, we get
\[
\int_\Omega f_{ijk}(x) \phi(x) \, d \| V \| (x) \hspace{-1pt} = \hspace{-1pt} \int_{\Omega \times \G} g_{ijk}(x,S) \phi(x) \, d  V (x,S) = \int_{\Omega } \int_{\G} g_{ijk}(x,S) \, d \nu_x (S) \phi(x) \, d  \V (x)
\]
so that for $\V$--almost every $x$, $\displaystyle f_{ijk} (x) = \int_{S \in \G} g_{ijk} (x,S) \, d \nu_x(S) = g_{ijk} (x,P(x))$, as wanted.

By definition of $A_{ijk}^h$, for every $\phi \in \xC_c^0 (\Omega)$ and for all $h$ we have
\begin{align*}
\int_\Omega \left( A_{ijk}^h (x) \right. & \left. + \int_{S \in \G} S_{jk} \, d \nu_x^h(S) \sum_{q} A_{qiq} (x) \right) \phi (x) \, d \|V_h\|(x) \\ & = \int_\Omega  A_{ijk}^h (x) \phi (x) \, d V_h(x,S)  +  \sum_{q} \int_{(x,S) \in \Omega \times \G}  A_{qiq}^h (x)  \phi (x) S_{jk} \, d V_h (x,S)\\
& = \int_\Omega \beta_{ijk}^h(x) \phi(x) \, d \|V_h\|(x).
\end{align*}
Taking again the limit as $h \to \infty$, we get
\[
\int_{\Omega }  f_{ijk}(x) \phi (x) \, d \|V\|(x)  +  \sum_{q} \int_{\Omega \times \G}  g_{qiq} (x,S)  \phi (x) S_{jk} \, d V (x,S)
 = \int_\Omega \beta_{ijk}(x) \phi(x) \, d \|V\|(x),
\]
and consequently, for $\V$--almost every $(x)$,
\begin{align*}
f_{ijk}(x) & + \int_{S \in \G} S_{jk} \sum_{q} g_{qiq} (x,S)  \, d \nu_x (S) = f_{ijk} (x) + P(x) \sum_q g_{qiq} (x,P(x)) \\
& = f_{ijk} (x) + P(x) \sum_q f_{qiq} (x) = \beta_{ijk}(x).
\end{align*}
By uniqueness of the solution to the linear system \eqref{eq:mainSystem2} (see Lemma~\ref{lemma:systemeInversible2}) we obtain 
\[
g_{ijk} (x,P(x)) = f_{ijk} (x) = A_{ijk} (x)
\] 
for $\V$--almost every $x$, hence $g_{ijk}(x,S) = A_{ijk}(x)$ for $V$--almost every $(x,S)$, which concludes the proof.
\end{proof}

\section{Approximate weak second fundamental form}
\label{Section:RegWSFF}
Let us fix some notations that we will use all along the paper. 
We fix three non-negative functions $\rho, \xi,\eta \in \R_+ \rightarrow \R_+$ of class $C^1$, with the properties listed below:
\begin{itemize}
	\item for all $t \geq 1$, $\rho (t) = \xi(t) = \eta(t) = 0$; 
	\item $\rho$ is decreasing, $\rho^\prime (0) =0$, and $\int_{\R^n} \rho(|x|) \, dx = 1$;
	\item $\xi(t)>0$ for all $0<t<1$, and $\int_{\R^n} \xi(|x|) \, dx = 1$.
\end{itemize}
We define for $\e > 0$ and $x \in \R^n$
\begin{equation} \label{eq:dfnRadialKernels}
\rho_\e (x) = \frac{1}{\e^n} \rho \left( \frac{|x|}{\e} \right), \qquad 
\xi_\e (x) = \frac{1}{\e^n} \xi \left( \frac{|x|}{\e} \right),\qquad
\eta_\e (x) = \frac{1}{\e^n} \eta \left( \frac{|x|}{\e} \right)\,.
\end{equation}
We also define the constants
\begin{equation}
C_\rho = d \omega_d \int_0^1 \rho(r)r^{d-1} \, dr\,, \qquad C_\xi = d \omega_d \int_0^1 \xi(r)r^{d-1} \, dr\,.
\end{equation}
Given an open set $\Omega\subset\R^n$ and $\e>0$ we define $\Omega_\e = \{x\in \Omega:\ \dist(x,\partial \Omega)>\e\}$.

\subsection{Approximation via mollification}

We showed in \cite{BuetLeonardiMasnou} how to define the approximate mean curvature of general varifolds using a mollification of the first variation. We extend here this approach to define the approximate second fundamental form of general varifolds. We observe that the idea of mollifying the first variation to get a smoothed mean curvature was first used by Brakke in~\cite{brakke} for varifolds with bounded first variation in order to construct their evolution by mean curvature flow. 

\label{sectionRegularization}
%

\begin{dfn}[Regularized variations]\label{dfn:RegGLinVar} 
Let $\Omega \subset \R^n$ be an open set and let $V$ be a $d$--varifold in $\Omega$. Given $i,j,k \in \{1, \ldots, n\}$ and $\e>0$, for any vector field $\varphi \in \xC_c^1 (\Omega_\e)$ we set
\begin{equation}\label{def:regGlinvar}
\delta_{ijk} V \ast \rho_\epsilon (\varphi
) := \delta_{ijk} V (\varphi \ast \rho_\epsilon) = \int_{\Omega \times \G}S_{jk} \,  \nabla^S  (\varphi\ast \rho_\epsilon) (y) \cdot e_i \, dV(y,S) \:.
\end{equation}
We say that $\delta_{ijk} V \ast \rho_\epsilon$ is a \emph{regularized $G$-linear variation} of $V$. 
\end{dfn}
Of course \eqref{def:regGlinvar} defines $\delta_{ijk} V \ast \rho_\epsilon$ in the sense of distributions. The following proposition, which is a simple adaptation of Proposition 4.2 in~\cite{BuetLeonardiMasnou}, shows that $\delta_{ijk} V \ast \rho_\epsilon$ can be represented by a smooth vector field with locally bounded $L^{1}$-norm when $V$ has locally finite mass in $\Omega$.
\begin{prop}[Representation of the regularized variations]\label{expression_of_delta_ijk_V^epsilon}
Let $\Omega \subset \R^n$ be an open set, $V$ a $d$--varifold in $\Omega$, and let $i,j,k \in \{1, \ldots, n\}$. Then $\delta_{ijk} V \ast \rho_\epsilon$ is represented by the continuous vector field
\begin{equation} \label{regGlinvar-rep}
\delta_{ijk} V \ast \rho_\epsilon (x) =  \int_{\Omega \times \G} \!\!\!\!S_{jk}\nabla^S \rho_\epsilon (y-x)\cdot e_i \, dV(y,S) = \frac{1}{\epsilon^{n+1}}\int_{\Omega \times \G} S_{jk}\nabla^S \rho_{1} \left(\frac{y-x}{\epsilon}\right)\cdot e_i \, dV(y,S)
\end{equation}
defined for $x\in \Omega_\e$. In addition, $\delta_{ijk} V \ast \rho_\epsilon \in \xL^1_{loc}(\Omega_\e)$.
\end{prop}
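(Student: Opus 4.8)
The plan is to derive the integral representation \eqref{regGlinvar-rep} directly from the definition \eqref{def:regGlinvar} by interchanging the order of integration, and then to verify the local $L^1$ bound. First I would recall that for $\varphi\in \xC_c^1(\Omega_\e)$ one has $(\varphi\ast\rho_\e)(y) = \int_{\R^n}\varphi(z)\rho_\e(y-z)\,dz$, so that $\nabla^S(\varphi\ast\rho_\e)(y)\cdot e_i = \int_{\R^n}\varphi(z)\,\nabla^S_y\rho_\e(y-z)\cdot e_i\,dz$, where $\nabla^S_y$ denotes the $S$-tangential gradient in the $y$ variable. Substituting this into \eqref{def:regGlinvar} gives a triple integral over $\Omega\times\G$ and $z\in\R^n$; since $\rho_\e$ is $C^1$ with compact support and $\varphi$ has compact support in $\Omega_\e$, the integrand is bounded and compactly supported, so Fubini applies and I may exchange the $z$-integral with the $dV(y,S)$-integral. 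This yields
\[
\delta_{ijk}V\ast\rho_\e(\varphi) = \int_{\R^n}\varphi(z)\left(\int_{\Omega\times\G}S_{jk}\,\nabla^S_y\rho_\e(y-z)\cdot e_i\,dV(y,S)\right)dz\,,
\]
which identifies the representing function at the point $x=z$ as the inner integral. The rescaling identity $\rho_\e(w) = \e^{-n}\rho_1(w/\e)$ and the chain rule $\nabla^S_y\rho_\e(y-x) = \e^{-n-1}\nabla^S\rho_1((y-x)/\e)$ then give the second equality in \eqref{regGlinvar-rep}. One must also note the sign/variable bookkeeping: $\nabla^S_y\rho_\e(y-z)$ equals the $S$-gradient of the function $w\mapsto\rho_\e(w)$ evaluated at $y-z$, which is what appears in the statement written as $\nabla^S\rho_\e(y-x)$.

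Next I would check continuity of the representing function in $x$. For $x$ ranging in a compact subset $K\subset\Omega_\e$, the kernel $(y,S)\mapsto S_{jk}\nabla^S\rho_\e(y-x)\cdot e_i$ is supported in $\{|y-x|\le\e\}$, hence in a fixed compact subset of $\Omega$; since $\rho_1\in C^1$, the map $x\mapsto S_{jk}\nabla^S\rho_\e(y-x)\cdot e_i$ is continuous uniformly in $(y,S)$ over that compact set, and $V$ has finite mass there, so dominated convergence yields continuity of $x\mapsto\delta_{ijk}V\ast\rho_\e(x)$ on $\Omega_\e$. For the $\xL^1_{loc}$ bound, fix a compact $K\subset\Omega_\e$ and estimate, using $|S_{jk}|\le 1$ and $|\nabla^S\rho_1|\le|\nabla\rho_1|\le\|\nabla\rho_1\|_\infty$,
\[
\int_K|\delta_{ijk}V\ast\rho_\e(x)|\,dx \le \frac{\|\nabla\rho_1\|_\infty}{\e^{n+1}}\int_K\int_{\Omega\times\G}\one_{\{|y-x|\le\e\}}\,dV(y,S)\,dx\,,
\]
and then swap the order of integration: the inner $x$-integral over $K$ of $\one_{\{|y-x|\le\e\}}$ is at most $\omega_n\e^n$, so the whole quantity is bounded by $\e^{-1}\omega_n\|\nabla\rho_1\|_\infty\,\|V\|(K^\e)$, which is finite since $V$ has locally finite mass. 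This gives $\delta_{ijk}V\ast\rho_\e\in\xL^1_{loc}(\Omega_\e)$.

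I do not expect any serious obstacle here; the statement is essentially a Fubini computation plus elementary estimates, entirely parallel to Proposition 4.2 of \cite{BuetLeonardiMasnou} with the extra factor $S_{jk}$ that is harmless since $|S_{jk}|\le 1$. The only points requiring a little care are the justification of the interchange of integration (ensuring the integrand is dominated by an integrable function, which holds because of the compact supports of $\varphi$ and $\rho_\e$ and the local finiteness of $\|V\|$) and the correct handling of the $S$-tangential gradient acting in the $y$ variable versus the $x$ variable, which differ only by a sign that is absorbed consistently in the final formula.
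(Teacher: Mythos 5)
Your proof is correct and follows essentially the same route as the paper's: the representation formula is obtained by the identity $\nabla^S(\varphi\ast\rho_\e)=\varphi\ast\nabla^S\rho_\e$ followed by Fubini, and the rescaling identity gives the second equality. You additionally spell out the continuity check and a quantitative $\xL^1_{loc}$ bound of the form $\e^{-1}\omega_n\|\nabla\rho_1\|_\infty\,\|V\|(K^\e)$, which the paper dispatches in one line by noting that $\nabla\rho_\e$ is bounded; both of these added details are sound.
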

\begin{proof}
By Fubini-Tonelli's theorem we get from~\eqref{def:regGlinvar} for every $\varphi \in \xC_c^1 (\Omega_\e)$:
\begin{align*}
 \delta_{ijk} V \ast \rho_\epsilon ( \varphi) & = \int_{\Omega \times \G} S_{jk}(\varphi \ast \nabla^S \rho_\epsilon)(y)\cdot e_i \, dV(y,S),\\
& = \int_{\Omega \times \G}S_{jk} \left(\int_{x \in \Omega_\e} \varphi(x) \nabla^S \rho_\epsilon(y-x) \, d \cL^n(x)\right)\cdot e_i \, dV(y,S), \\
& = \int_{x \in\Omega_\e} \varphi(x) \left( \int_{\Omega \times \G}  S_{jk} \nabla^S \rho_\epsilon(y-x)\cdot e_i \, dV(y,S) \right) \, d \cL^n (x) \: ,
\end{align*}
which proves \eqref{regGlinvar-rep}. The fact that $\delta_{ijk} V \ast \rho_\epsilon \in \xL^1_{loc}(\Omega_\e)$ is an immediate consequence of $\nabla\rho_{\e}$ being bounded in $\R^{n}$.
\end{proof}

\begin{remk}\rm 
We emphasize that $\delta_{ijk} V \ast \rho_\epsilon$ is in $L^{1}_{loc}(\Omega_\e)$ even when $\delta_{ijk} V$ is not locally bounded.
\end{remk}

In view of the above representation of the $G$-linear variation, given any $d$--varifold $V = \V \otimes \nu_x$ and given $\e > 0$, the following quantities are defined for $\V$--almost every $x\in\Omega_\e$:
\begin{equation}\label{eq:betaeps-ceps}
\beta_{ijk}^{V,\e} (x) = - \frac{C_\xi}{C_\rho}\frac{\delta_{ijk} V \ast \rho_\e (x)}{\V \ast \xi_\e (x)}  \quad \text{and} \quad c_{jk}^{V,\e} (x) =  \frac{ \left( \displaystyle\int_{\G} S_{jk} \, d \nu_\cdot (S) \V \right) \ast \eta_\e (x)}{\V \ast \eta_\e (x)}\,.
\end{equation}
Recall that in \cite{BuetLeonardiMasnou}, we defined a regularization of the mean curvature vector $H^V$ of a varifold $V$ with finite mass and locally bounded first variation as the approximate mean curvature vector given for $\V$--a.e. $x\in \Omega_\e$ by
\[
H^{V,\e}(x) = - \frac{C_\xi}{C_\rho} \frac{\delta V \ast \rho_\e (x)}{\V \ast \xi_\e (x)} \: .
\]
The regularizations of $\beta_{ijk}^V$ and $H^V$ are consistent in the sense that the structural identities $(2)$ in Proposition~\ref{propStructuralProperties} are preserved, i.e.
\begin{equation} \label{eqBetaHeps}
\sum_q \beta_{q i q}^{V,\e} =  H_i^{V,\e} \quad \text{and} \quad \sum_q \beta_{iqq}^{V,\e} = d H_i^{V,\e} \: .
\end{equation}

\begin{remk}
As when we defined $H^{V,\e}$ in \cite{BuetLeonardiMasnou}, we allow two different kernels for the regularization of $\delta_{ijk} V$ and $\V$ in the definition of $\beta_{ijk}^{V, \e}$. 
As evidenced for mean curvature, the same computations as done in \cite{BuetLeonardiMasnou} Section 5, suggest a specific choice of $\rho$ and $\xi$, namely the so-called natural kernel pair relation, that is
\begin{equation} \label{eq:NKP}
n \xi(s) = - s \rho^\prime(s) \, .
\end{equation}
This choice has proven to be relevant from a numerical point of view (see \cite{BuetLeonardiMasnou}, Section~8) and we shall use it in the numerical experiments performed in Section~\ref{sectionNumerics}, we recall that with this choice of kernels $\frac{C_\xi}{C_\rho} = \frac{d}{n}$.
\end{remk}
We can write $\displaystyle c^{V, \e}(x) = \int_{\Omega \times \G} S \, dW_x (y,S)$ where $W_x $ is the probability measure defined as
\[
dW_x (y,S) = \frac{\eta_\e \left( y - x \right) dV(y,S)}{\int_{\Omega \times \G} \eta_\e \left( z - x \right) d V(z,P)}\: .
\]
so that Lemma~\ref{lemma:systemeInversible2} applies to the linear system
\begin{equation} \label{eq:RegularizedSystem}
a_{ijk} + c_{jk}^{V,\e}(x) \sum_{l} a_{lil} = \beta_{ijk}^{V,\e}(x) \: , \quad  \text{for } i,j,k=1 \ldots n,
\end{equation}
which arises as the natural regularization of \eqref{eq:mainSystem2} and the unique solution is explicitly given in \eqref{aijkexplicit}. This allows to introduce a notion of approximate second fundamental form as follows:

\begin{dfn}[Approximate second fundamental form, $\e$--WSFF] \label{propDfnRegularizedSecondFundForm}
Let $\Omega \subset \R^n$ be an open set and $V = \V \otimes \nu_x$ be a $d$--varifold in $\Omega$.
For $\epsilon >0$ and for $\V$--almost every $x\in \Omega_\e$, we define $A_{ijk}^{V,\e} (x)$ as the unique solution to system \eqref{eq:RegularizedSystem} that is
\begin{equation} \label{eqExplicitAijkeps}
A_{ijk}^{V,\e}  =  \beta_{ijk}^{V,\e} - c_{jk}^{V,\e} \left( (I + c^{V,\e})^{-1} H^{V,\e} \right)_i \: .
\end{equation}
We call $\{A_{ijk}^{V,\e}\}$ the approximate second fundamental form, or the $\e$--WSFF. With a slight abuse of terminology, we will also call approximate second fundamental form the tensor $\{B_{ij}^{k,\e}\}$ defined by $\displaystyle B_{ij}^{k,\e} = \frac{1}{2} ( A_{ijk}^{V,\e} + A_{jik}^{V,\e} - A_{kij}^{V,\e} )$  (see~\eqref{eq:explicitExpressionB}).
\end{dfn}

\begin{remk}
We stress that the $\e$--WSFF can be defined for \emph{any} varifold $V$, even though $V$ does not have bounded variations. 
\end{remk}

We denote for brevity
\[
\beta^{V} = (\beta_{ijk}^{V})_{ijk}, \quad c^{V} = (c_{jk}^{V})_{jk}, \quad A^{V} = (A_{ijk}^{V})_{ijk} \quad \text{and} \quad B^{V} = (B_{ij}^{k})_{ijk}
\]
and
\[
\beta^{V,\e} = (\beta_{ijk}^{V,\e})_{ijk}, \quad c^{V,\e} = (c_{jk}^{V,\e})_{jk}, \quad  A^{V,\e} = (A_{ijk}^{V,\e})_{ijk} \quad \text{and} \quad B^{V,\e} = ( B_{ij}^{k,\e} )_{ijk} \: .
\]
When there is no ambiguity, we might drop the part of the exponent referring to the varifold $V$. 

\subsection{Consistency of the approximate second fundamental form}
\label{sectionConsistency}

We justify in Proposition~\ref{prop_pointwise_convergence} the definition of $\e$--WSFF (Definition~\ref{propDfnRegularizedSecondFundForm}) by proving that it converges (when $\e \to 0$) to the exact one when it exists.

\begin{prop}[Consistency of the approximate second fundamental form]\label{prop_pointwise_convergence}
Let $\Omega \subset \R^n$ be an open set and let $V$ be a rectifiable $d$--varifold with bounded variations. Then, for $\V$--almost any $x \in \Omega$ the quantities 
$\beta^{V,\e} (x), c^{V,\e}(x), A^{V,\e} (x), B^{V,\e}$ are defined for $\e$ small enough, and respectively converge to $\beta^V (x), c^V (x), A^V (x), B^V(x)$ as $\e\to 0$.\\
Moreover, there exist constants $C_1 > 0$, $C_2 > 0$ such that
\begin{equation} \label{eqPointwiseCvEstimate}
\left| A^{V,\e} - A^V \right| \leq C_1  | \beta^{V,\e} - \beta^V | + C_2 |H^V| |c^{V,\e} - c^V | \: .
\end{equation}
\end{prop}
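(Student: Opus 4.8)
The plan is to establish the pointwise convergence of the regularized quantities one at a time, in the order $\beta^{V,\e}\to\beta^V$, $c^{V,\e}\to c^V$, then deduce $A^{V,\e}\to A^V$ (and $B^{V,\e}\to B^V$) via the explicit formula \eqref{eqExplicitAijkeps} together with the estimate \eqref{eqPointwiseCvEstimate}. For the convergence of $\beta^{V,\e}$, I would invoke the convergence result already established in \cite{BuetLeonardiMasnou} for the approximate mean curvature: the quantity $\beta^{V,\e}_{ijk}(x)=-\frac{C_\xi}{C_\rho}\frac{\delta_{ijk}V\ast\rho_\e(x)}{\|V\|\ast\xi_\e(x)}$ has exactly the same structure as $H^{V,\e}(x)=-\frac{C_\xi}{C_\rho}\frac{\delta V\ast\rho_\e(x)}{\|V\|\ast\xi_\e(x)}$, with $\delta_{ijk}V$ (a Radon measure, since $V$ has bounded variations) in place of $\delta V$. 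Since $V$ is rectifiable, at $\|V\|$-almost every $x$ the approximate tangent plane exists, $\|V\|\ast\xi_\e(x)$ converges to the appropriate density multiple of $\theta(x)$, and the regularization of the Radon-Nikod\'ym decomposition $\delta_{ijk}V=-\beta^V_{ijk}\|V\|+(\delta_{ijk}V)_s$ yields $\beta^{V,\e}_{ijk}(x)\to\beta^V_{ijk}(x)$ at every Lebesgue point of $\beta^V_{ijk}$ where the singular part contributes negligibly (standard differentiation-of-measures arguments, as in \cite{BuetLeonardiMasnou}). For $c^{V,\e}\to c^V$, I would again use that $V$ is rectifiable, so $\nu_x=\delta_{P(x)}$ for $\|V\|$-a.e.\ $x$ and $c^V(x)=P(x)$; the quantity $c^{V,\e}(x)=\int S\,dW_x$ is then simply the mollification of $x\mapsto P(x)$ against the probability kernel $W_x$ built from $\eta_\e$, and it converges to $P(x)$ at every approximate-continuity point of $P$, which is $\|V\|$-a.e.\ $x$ by rectifiability.

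With the convergence of $\beta^{V,\e}$ and $c^{V,\e}$ in hand, the convergence of $A^{V,\e}$ follows from the continuity of the solution map of the linear system. Concretely, from \eqref{eqExplicitAijkeps} and \eqref{aijkexplicit} we have $A^{V,\e}_{ijk}=\beta^{V,\e}_{ijk}-c^{V,\e}_{jk}\big((I+c^{V,\e})^{-1}h^{V,\e}\big)_i$ with $h^{V,\e}_i=\sum_q\beta^{V,\e}_{qiq}$, and similarly for $A^V$; using \eqref{eqBetaHeps} one identifies $h^{V,\e}=H^{V,\e}$ and $h^V=H^V$. The bound \eqref{eqUniformBoundAijkBetaijk} of Lemma~\ref{lemma:systemeInversible2} gives $\|(I+c)^{-1}\|\le C_0$ uniformly in the argument, and $\|c^{V,\e}\|_\infty\le 1$, so subtracting the two expressions, adding and subtracting cross terms, and using the resolvent identity $(I+c_1)^{-1}-(I+c_2)^{-1}=(I+c_1)^{-1}(c_2-c_1)(I+c_2)^{-1}$ yields, after collecting terms, an estimate of the form $|A^{V,\e}-A^V|\le C_1|\beta^{V,\e}-\beta^V|+C_2|H^V||c^{V,\e}-c^V|$, which is exactly \eqref{eqPointwiseCvEstimate}; the constants $C_1,C_2$ depend only on $n$ through $C_0$. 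Since the right-hand side tends to $0$ at $\|V\|$-a.e.\ $x$ (note $H^V\in L^1_{loc}(\|V\|)$, hence finite a.e.), we conclude $A^{V,\e}(x)\to A^V(x)$, and the convergence of $B^{V,\e}$ to $B^V$ is then immediate from the linear formula \eqref{eq:explicitExpressionB}.

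The main obstacle is the careful handling of the singular part $(\delta_{ijk}V)_s$ in the convergence $\beta^{V,\e}\to\beta^V$: one must verify that, at $\|V\|$-a.e.\ $x$, the contribution $\frac{(\delta_{ijk}V)_s\ast\rho_\e(x)}{\|V\|\ast\xi_\e(x)}$ vanishes as $\e\to0$. This is a density/differentiation statement that reduces to the fact that a measure singular with respect to $\|V\|$ has vanishing $\|V\|$-density at $\|V\|$-almost every point, combined with a lower density bound for $\|V\|$ coming from rectifiability; the precise argument is essentially the one carried out in \cite{BuetLeonardiMasnou} for the mean curvature, so I would either quote it or reproduce the short proof. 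Everything else — the resolvent manipulations, the uniform operator bounds, the passage from $A$ to $B$ — is routine once these two pointwise-convergence facts are secured.
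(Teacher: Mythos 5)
Your proposal is correct and follows essentially the same route as the paper's proof: the same Lebesgue-point/density decomposition (including the separate treatment of the singular part of $\delta_{ijk}V$, which you correctly flag as the key technical point and which the paper handles via the lower density bound from rectifiability), the same treatment of $c^{V,\e}$ as the mollification of an $\xL^1(\V)$ map, and the same algebraic estimate for $A^{V,\e}-A^V$ using the uniform bound $\|(I+c)^{-1}\|\le C_0$ — your resolvent identity is just a repackaging of the paper's manipulation of $(I+c^{V,\e})(u^\e-u)+(c^V-c^{V,\e})u=H^{V,\e}-H^V$.
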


\begin{proof} 
The proof of the convergence of $\beta^{V,\e}$ and $c^{V,\e}$ is obtained arguing as in \cite[Proposition $4.1$]{BuetLeonardiMasnou}. For the sake of clarity, we provide it hereafter.
For $x \in \supp\V\cap \Omega$ and $\e>0$ small enough, we can drop $V$ in the exponent for simplicity, and write
\begin{equation} \label{eq:PointwiseCV1}
\left| \beta_{ijk}^\e (x) - \beta_{ijk} (x) \right|  = \frac{C_\xi}{C_\rho}\left|\frac{\Big(\beta_{ijk} \V - \delta_{ijk} V_s \Big) \ast \rho_\e (x)}{\V \ast \xi_\e (x)} - \frac{C_\rho}{C_\xi}\beta_{ijk}(x) \right| \leq  I_1(\e) + I_2(\e) + I_3(\e) \,,
\end{equation}
where 
\begin{align*}
I_1(\e) &= \frac{C_\xi}{C_\rho}\left|\frac{\int \Big(\beta_{ijk}(y) - \beta_{ijk}(x)\Big)\rho_\e(x-y)\, d\V(y)}{\V\ast \xi_\e(x)}\right|\,,\\
I_2(\e) &= \frac{C_\xi}{C_\rho}\cdot |\beta_{ijk}(x)|\cdot \left|\frac{\V\ast \rho_\e(x)}{\V\ast \xi_\e(x)} - \frac{C_\rho}{C_\xi}\right|\,,\\
I_3(\e) &= \frac{C_\xi}{C_\rho}\frac{|\delta_{ijk}V_s \ast \rho_\e(x)|}{\V\ast \xi_\e(x)}\,.
\end{align*}
We recall that $V$ is rectifiable, that is, $V = v(M,\theta)$. Then we preliminarily notice that, for $\V$-almost every $x$, 
\begin{align}\nonumber
\e^{-d} \V (B_\e (x)) &\xrightarrow[\e \to 0]{} \omega_d \theta (x),\\ \label{eq:PointwiseCVdensity}
\epsilon^{n-d} \V \ast \rho_\epsilon (x) &\xrightarrow[\epsilon \to 0]{} C_\rho \theta (x)>0,\\\nonumber
\epsilon^{n-d} \V \ast \xi_\epsilon (x) &\xrightarrow[\epsilon \to 0]{} C_\xi \theta (x)>0\,,
\end{align}
which follows from the definition of the approximate tangent plane and the fact that $\rho \in \xC_c^0 (\R^n)$.
Thanks to \eqref{eq:PointwiseCVdensity}, for $\V$-almost any $x \in \Omega$ (such that $x$ is also a Lebesgue point of $\beta_{ijk} \in \xL^1 (\V)$), we have
\begin{align*}
I_1(\e) &\le \frac{C_\xi\|\rho\|_\infty \e^{-d}\V(B_\e(x))}{C_\rho \e^{n-d}\V\ast \xi_\e(x)} \cdot \frac{1}{\V(B_\e(x))} \int_{B_\e(x)}|\beta_{ijk}(y) - \beta_{ijk}(x)|\, d\V(y) \xrightarrow[\epsilon \to 0]{} 0\,,\\
I_2(\e) &\le \frac{C_\xi |\beta_{ijk}(x)|}{C_\rho}\left|\frac{\V\ast \rho_\e(x)}{\V\ast \xi_\e(x)} - \frac{C_\rho}{C_\xi}\right| \xrightarrow[\epsilon \to 0]{} 0\,,\\ 
I_3(\e) &\le \frac{\left| \delta_{ijk} V_s \right| \ast \rho_\epsilon (x) }{ \V \ast \xi_\epsilon (x) } \leq \frac{C_\xi\|\rho\|_\infty \e^{-d}\V(B_\e(x))}{C_\rho \e^{n-d}\V\ast \xi_\e(x)} \cdot \frac{|\delta_{ijk} V_s|(B_\e(x))}{\V(B_\e(x))} \xrightarrow[\epsilon \to 0]{} 0\,,
\end{align*}
The previous estimate together with \eqref{eq:PointwiseCV1} lead to $\beta_{ijk}^\e (x) \xrightarrow[\e \to 0]{} \beta_{ijk} (x)$, as wanted.

Now we observe that the map $x \mapsto \int_{\G} S \, d \nu_x (P)$ is in $\xL^1 (\V)$, then by a similar proof, where essentially one replaces $\beta_{ijk} (x)$ with $\int_{\G} S \, d \nu_x (P)$, we get the convergence of $c^\e$ to $c$ for $\V$ almost every $x\in \Omega$; and the convergences of $A^{V,\e}$ and $B^{V,\e}$ follow.

We are thus left with the proof of \eqref{eqPointwiseCvEstimate} which follows from the fact that $|c_{jk}^{V,\e}| \leq 1$ and that $\| (I+c^{V,\e})^{-1} \|$ and $\| (I+c^{V})^{-1} \|$ are uniformly bounded by a dimensional constant $C_0$ (see \eqref{eqUniformBoundAijkBetaijk}). Indeed, from \eqref{aijkexplicit}, we have
\begin{align}
\left| A_{ijk}^{V} - A_{ijk}^{V,\e} \right| & \leq \left| \beta_{ijk}^{V} - \beta_{ijk}^{V,\e}  \right|  + \left| (I + c^V)^{-1} H^V \right| \left| c_{jk}^{V, \e} - c_{jk}^{V} \right| \nonumber \\
& + \left\| c_{jk}^{V, \e} \right\|  \left| (I + c^{V, \e})^{-1} H^{V, \e} - (I + c^V)^{-1} H^V \right| \nonumber \\
& \leq \left| \beta^{V} - \beta^{V,\e}  \right| + C_0 \left| H^V \right| \left| c_{jk}^{V, \e} - c_{jk}^{V} \right| +  \left| (I + c^{V, \e})^{-1} H^{V, \e} - (I + c^V)^{-1} H^V \right| \label{eqPointwiseCvEstimate2}
\end{align}
And for the last term in \eqref{eqPointwiseCvEstimate2}, let us temporarily denote $u = (I + c^V)^{-1} H^V$ and $u^{\e} = (I + c^{V, \e})^{-1} H^{V, \e}$ so that
\[
(I + c^{V, \e}) (u^\e - u) + (c^V - c^{V, \e}) u = (I + c^{V, \e}) u^\e - (I + c^{V}) u = H^{V, \e} - H^V
\]
leading to
\begin{align}
\left|u^\e - u \right| & = \left| (I + c^{V, \e})^{-1} \left( H^{V, \e} - H^V - (c^V - c^{V, \e}) u \right) \right| \nonumber \\
& \leq C_0 \left( \left| H^{V, \e} - H^V \right| + \left\| c^V - c^{V, \e} \right\| \left| (I + c^V)^{-1} H^V \right| \right) \nonumber \\
& \leq C_0 n \left| \beta^{V} - \beta^{V,\e}  \right| + C_0^2 \left| H^V \right| \left\| c^V - c^{V, \e} \right\| \label{eqPointwiseCvEstimate3}
\end{align}
since $\displaystyle H_i^V = \sum_q \beta_{qiq}^V$ and $\displaystyle H_i^{V, \e} = \sum_q \beta_{qiq}^{V, \e}$. Plugging \eqref{eqPointwiseCvEstimate3} into \eqref{eqPointwiseCvEstimate2} we obtain \eqref{eqPointwiseCvEstimate}.

\end{proof}

\subsection{Convergence of the approximate second fundamental form of a sequence of varifolds}
\label{sectionConvergence}

We now turn to the following question: given a sequence of $d$--varifolds $(V_h)_h$ approximating (in the sense of weak--$\ast$ convergence for instance) a $d$--rectifiable varifold with bounded variations, does the $\e$--WSFF $B^{V_h,\e_h}$ of $V_h$ converge to the WSFF $B^V$ of $V$ under suitable assumptions? In order to answer this question we follow the scheme of \cite{BuetLeonardiMasnou} and obtain two convergence results. A first one showing a slower convergence rate, but requiring weaker assumptions; a second one showing a better convergence rate, but under stronger assumptions. The proofs are slight variants of the ones given in \cite[Theorems $4.5$ and $4.8$]{BuetLeonardiMasnou}), therefore we shall only provide the key estimates and underline the main differences with\cite{BuetLeonardiMasnou}.
 
\begin{theo} \label{theoConvergence1}
Let $\Omega \subset \R^n$ be an open set and $V$ be a rectifiable $d$--varifold with bounded variations. Let $(V_h)_h$ be a sequence of $d$--varifolds and let $(\eta_h)_h, (d_h)_h$ be two positive, decreasing and infinitesimal sequences satisfying the following property: for any ball $B \subset \Omega$, with small enough radius and centered in $\supp \V$, one has
\begin{equation} \label{eqFlatDistanceControl}
\Delta_B (V,V_h) \leq d_h \min \left( \V (B^{\eta_h}) , \| V_h \| (B^{\eta_h}) \right). 
\end{equation}
Then, for $\V$--almost every $x$, for any sequence $(z_h)_h$ tending to $x$, and for any infinitesimal sequence $(\epsilon_h)_h$ such that 
\[
\frac{\eta_h}{\epsilon_h} \xrightarrow[h \to \infty]{} 0\,,
\]
one has for $h$ large enough
\begin{equation} \label{eqCVMatrix}
\begin{array}{rl}
\displaystyle
\max_{i,j,k} \left\lbrace \left| \beta^{V_h,\e_h}_{ijk} (z_h) - \beta^{V,\e_h}_{ijk} (x) \right| ,  \left| A^{V_h,\e_h}_{ijk} (z_h) - A^{V,\e_h}_{ijk} (x) \right| \right\rbrace & \leq C\| \rho \|_{\xW^{2,\infty}} \displaystyle \frac{d_h + |x-z_h|}{\epsilon_h^2} , \\[12pt]
 \left| c^{V_h,\e_h} (z_h) - c^{V,\e_h} (x) \right| \leq C\| \rho \|_{\xW^{1,\infty}} \displaystyle \frac{d_h + |x-z_h|}{\epsilon_h} .
\end{array}
\end{equation}
In particular, both right-hand sides of \eqref{eqCVMatrix} are infinitesimal as soon as 
\[
\frac{d_h + |x-z_h|}{(\epsilon_h)^2} \xrightarrow[h \to \infty]{} 0\,.
\]
\end{theo}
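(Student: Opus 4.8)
The plan is to follow the scheme of \cite[Theorems~4.5 and~4.8]{BuetLeonardiMasnou}: reduce the whole statement to ``stability'' estimates for the elementary mollified quantities $\delta_{ijk}V\ast\rho_\e$, $\V\ast\xi_\e$, $\V\ast\eta_\e$ and $\mu^V_{jk}\ast\eta_\e$, where $\mu^V_{jk}:=\big(\int_\G S_{jk}\,d\nu_\cdot(S)\big)\V$ satisfies $|\mu^V_{jk}|\le\V$ --- estimates that have to be uniform both under a change of the underlying varifold, measured by the bounded Lipschitz distance $\Delta$, and under a change of the evaluation point. I would fix $x$ in the full-$\V$-measure set on which the density limits \eqref{eq:PointwiseCVdensity} hold, on which $\beta^{V,\e}(x)\to\beta^V(x)$ and $c^{V,\e}(x)\to c^V(x)$ (Proposition~\ref{prop_pointwise_convergence}), and at which $x$ is a Lebesgue point of each $\beta^V_{ijk}\in\xL^1_{loc}(\V)$ while the singular parts $(\delta_{ijk}V)_s$ have zero $\V$-density; and I would work with $h$ so large that $\eta_h<\e_h$, $|x-z_h|<\e_h$, $B_{3\e_h}(x)\subset\subset\Omega$, $x\in\supp\V$, and (a point I will come back to) the ratio $\tfrac{d_h+|x-z_h|}{\e_h}$ lies below a threshold depending only on $x$ and the kernels.

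\emph{Step 1 (stability of the building blocks).} Using the representation \eqref{regGlinvar-rep} I would write $\delta_{ijk}W\ast\rho_\e(\zeta)=\int_{\Omega\times\G}\Psi^\e_\zeta\,dW$ with $\Psi^\e_\zeta(y,S)=S_{jk}\,\nabla^S\rho_\e(y-\zeta)\cdot e_i$, supported in $B_\e(\zeta)\times\G$ and satisfying $\xlip(\Psi^\e_\zeta)\le C\e^{-n-2}\|\rho\|_{\xW^{2,\infty}}$ together with $|\Psi^\e_\zeta-\Psi^\e_{\zeta'}|\le C\e^{-n-2}\|\rho\|_{\xW^{2,\infty}}|\zeta-\zeta'|$. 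Splitting $\delta_{ijk}V_h\ast\rho_{\e_h}(z_h)-\delta_{ijk}V\ast\rho_{\e_h}(x)$ into a ``change of varifold'' term $\int\Psi^{\e_h}_{z_h}\,d(V_h-V)$ and a ``change of centre'' term $\int(\Psi^{\e_h}_{z_h}-\Psi^{\e_h}_x)\,dV$, and estimating the first by the suitably normalised $\Delta_{B_{2\e_h}(x)}(V,V_h)$ and the second by $C\e_h^{-n-2}\|\rho\|_{\xW^{2,\infty}}|x-z_h|\,\V(B_{2\e_h}(x))$, I would obtain
\[
|\delta_{ijk}V_h\ast\rho_{\e_h}(z_h)-\delta_{ijk}V\ast\rho_{\e_h}(x)|\le C\|\rho\|_{\xW^{2,\infty}}\,\e_h^{-n-2}\big(\Delta_{B_{2\e_h}(x)}(V,V_h)+|x-z_h|\,\V(B_{2\e_h}(x))\big),
\]
and the same argument, now with first derivatives only, yields the analogues for $\V\ast\xi_\e$, $\V\ast\eta_\e$ and $\mu^V_{jk}\ast\eta_\e$ (with $V$ replaced by $V_h$ at the centre $z_h$), with $\e_h^{-n-2}\|\rho\|_{\xW^{2,\infty}}$ replaced by $\e_h^{-n-1}\|\xi\|_{\xW^{1,\infty}}$, resp.\ $\e_h^{-n-1}\|\eta\|_{\xW^{1,\infty}}$. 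Since $V=v(M,\theta)$ is rectifiable, \eqref{eq:PointwiseCVdensity} gives $\V(B_{2\e_h}(x))\le C\theta(x)\e_h^d$; since $\eta_h<\e_h$, $B_{2\e_h}(x)^{\eta_h}\subset B_{3\e_h}(x)$, so \eqref{eqFlatDistanceControl} gives $\Delta_{B_{2\e_h}(x)}(V,V_h)\le d_h\,\V(B_{3\e_h}(x))\le C\theta(x)\e_h^d\,d_h$. Hence the ``numerator errors'' are $\le C\theta(x)\,\e_h^{d-n-2}\|\rho\|_{\xW^{2,\infty}}(d_h+|x-z_h|)$ and $\le C\theta(x)\,\e_h^{d-n-1}(\|\xi\|_{\xW^{1,\infty}}+\|\eta\|_{\xW^{1,\infty}})(d_h+|x-z_h|)$.

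\emph{Step 2 (passing to $\beta$, $c$, $H$).} By \eqref{eq:PointwiseCVdensity}, $\V\ast\xi_{\e_h}(x)$ and $\V\ast\eta_{\e_h}(x)$ are both comparable to $\theta(x)\e_h^{d-n}$ for $h$ large; combined with Step 1 and the threshold assumption on $\tfrac{d_h+|x-z_h|}{\e_h}$, this forces $\|V_h\|\ast\xi_{\e_h}(z_h)$ and $\|V_h\|\ast\eta_{\e_h}(z_h)$ to be bounded below by a fixed multiple of $\theta(x)\e_h^{d-n}$ as well. Applying the identity $\tfrac{a}{b}-\tfrac{a'}{b'}=\tfrac{a-a'}{b}+\tfrac{a'(b'-b)}{bb'}$ to $\beta^{V,\e}_{ijk}=-\tfrac{C_\xi}{C_\rho}\,\tfrac{\delta_{ijk}V\ast\rho_\e}{\V\ast\xi_\e}$ and $c^{V,\e}_{jk}=\tfrac{\mu^V_{jk}\ast\eta_\e}{\V\ast\eta_\e}$ (see \eqref{eq:betaeps-ceps}), using for the former the bound $|\delta_{ijk}V\ast\rho_{\e_h}(x)|\le C(1+|\beta^V_{ijk}(x)|)\,\theta(x)\e_h^{d-n}$ (a consequence of $\beta^{V,\e_h}_{ijk}(x)\to\beta^V_{ijk}(x)$) and for the latter $|\mu^V_{jk}\ast\eta_{\e_h}(x)|\le\V\ast\eta_{\e_h}(x)$, and dividing the $\e_h^{d-n-2}$, resp.\ $\e_h^{d-n-1}$, numerator errors of Step 1 by the $\e_h^{d-n}$ denominators, I would get
\[
|\beta^{V_h,\e_h}_{ijk}(z_h)-\beta^{V,\e_h}_{ijk}(x)|\le C\|\rho\|_{\xW^{2,\infty}}\,\tfrac{d_h+|x-z_h|}{\e_h^2},\qquad |c^{V_h,\e_h}(z_h)-c^{V,\e_h}(x)|\le C\,\tfrac{d_h+|x-z_h|}{\e_h},
\]
the one-power gap between the two rates being exactly the difference between the change of centre costing a second derivative of $\rho_\e$ and only a first derivative of $\xi_\e$, $\eta_\e$; here $C$ depends on $x$, $n$, $d$ and the fixed kernels. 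Since $H^{V,\e}_i=\sum_q\beta^{V,\e}_{qiq}$, the regularised mean curvature inherits the first bound.

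\emph{Step 3 (passing to $A$) and conclusion.} I would propagate the estimates through $A^{V,\e}_{ijk}=\beta^{V,\e}_{ijk}-c^{V,\e}_{jk}\big((I+c^{V,\e})^{-1}H^{V,\e}\big)_i$ (see \eqref{eqExplicitAijkeps}). By Lemma~\ref{lemma:systemeInversible2}, $\|(I+c^{V_h,\e_h}(z_h))^{-1}\|$ and $\|(I+c^{V,\e_h}(x))^{-1}\|$ are $\le C_0$, with $\|(I+c_1)^{-1}-(I+c_2)^{-1}\|\le C_0^2\|c_1-c_2\|$, while $|c^{V,\e_h}(x)|\le1$ and $|H^{V,\e_h}(x)|$ is bounded for $h$ large (Proposition~\ref{prop_pointwise_convergence}); a routine difference-of-products estimate then bounds $|A^{V_h,\e_h}_{ijk}(z_h)-A^{V,\e_h}_{ijk}(x)|$ by a constant times the $\beta$- and $c$-errors of Step 2, i.e.\ by $C\|\rho\|_{\xW^{2,\infty}}\,\tfrac{d_h+|x-z_h|}{\e_h^2}$ (the dominant of the two rates, using $\e_h<1$). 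This proves \eqref{eqCVMatrix}. The ``in particular'' clause then follows at once, since $\tfrac{d_h+|x-z_h|}{\e_h}=\e_h\cdot\tfrac{d_h+|x-z_h|}{\e_h^2}\to0$ whenever $\tfrac{d_h+|x-z_h|}{\e_h^2}\to0$ (and, in turn, this last hypothesis is exactly what makes the threshold condition of the setup automatic). The hard part will be Step 2: securing the uniform two-sided control of the regularised masses $\|V_h\|\ast\xi_{\e_h}(z_h)$ and $\|V_h\|\ast\eta_{\e_h}(z_h)$ simultaneously along the whole sequence and at the moving centres $z_h$; this is precisely where the rectifiability of the limit $V$ (through \eqref{eq:PointwiseCVdensity}), the quantitative closeness \eqref{eqFlatDistanceControl}, and the hypothesis $\eta_h/\e_h\to0$ are used, and it is what forces the restriction to the regime in which $\tfrac{d_h+|x-z_h|}{\e_h}$ is small (in the complementary regime \eqref{eqCVMatrix} is either vacuous or a consequence of the crude bounds $|c_{jk}|\le1$ and \eqref{regGlinvar-rep}).
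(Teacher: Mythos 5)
Your proposal is correct and follows essentially the same route as the paper: the paper likewise reduces everything to the two stability estimates for the mollified numerators (split into a change-of-varifold term controlled by $\Delta_{B}(V,V_h)$ via the Lipschitz constants $\e^{-1}\|\rho\|_{\xW^{1,\infty}}$, $\e^{-2}\|\rho\|_{\xW^{2,\infty}}$, and a change-of-centre term controlled by $|x-z_h|\,\V(B)$), then combines them with the density asymptotics and the explicit formula for $A^{V,\e}$ exactly as in your Steps 2--3, deferring those last details to \cite[Theorem 4.5]{BuetLeonardiMasnou}. Your write-up is in fact somewhat more explicit than the paper's about the lower bound on the regularized masses at the moving centres, but the underlying argument is the same.
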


\begin{proof}
In order to prove \eqref{eqCVMatrix}, it is enough to prove that the right-hand sides of \eqref{eqCVMatrix} provide upper bounds for, respectively, $\left| \beta^{V_h,\e_h}(z_h) - \beta^{V,\e_h}(x) \right|$ and $|c^{V_h,\e_h}(z_h) - c^{V,\e_h}(x)|$. The proof is exactly the same as that of Theorem 4.5 in \cite{BuetLeonardiMasnou}. Shortly, we combine Proposition~\ref{prop_pointwise_convergence} with the two estimates below: for $\epsilon > 0$,
\begin{multline} \label{eqLemma4.4_1}
 \left| \int S \rho \left( \frac{y-z_h}{\epsilon} \right) \,  dV_h (y,S)  - \int S \rho \left( \frac{y-x}{\epsilon} \right) \, dV(y,S) \right| \\
  \leq \frac{1}{\epsilon} \| \rho \|_{\xW^{1,\infty}} \left( \Delta_{B_{\epsilon + |x-z_h |}(x) } (V_h,V) + |x-z_h| \V \left( B_{\epsilon + |x-z_h |}(x) \right) \right)
\end{multline}
and 
\begin{multline} \label{eqLemma4.4_2}
 \epsilon^n \left| \delta_{ijk} V_h \ast \rho_\epsilon (z_h)  - \delta_{ijk} V \ast \rho_\epsilon (x) \right| \\ = \frac{1}{\epsilon} \left|  \int S_{jk} \nabla^S \rho \left( \frac{y-z_h}{\epsilon} \right) \, dV_h (y,S) -  \int S_{jk} \nabla^S \rho \left( \frac{y-x}{\epsilon} \right) \, dV (y,S) \right| \\  \leq \frac{1}{\epsilon^2} \| \rho \|_{\xW^{2,\infty}} \left( \Delta_{B_{\epsilon + |x-z_h |}(x) } (V_h,V) + |x-z_h| \V \left( B_{\epsilon + |x-z_h |}(x) \right) \right).
\end{multline}
In order to prove \eqref{eqLemma4.4_1} and \eqref{eqLemma4.4_2}, we first observe that following, general inequality holds:
\begin{align}
\left| \int \psi_h (y,S) \, dV_h - \int \psi (y,S) \, dV \right| & \leq \left| \int \psi_h (y,S) \, d (V_h - V) \right| + \left| \int \left(\psi_h (y,S) - \psi(y,S) \right) \, dV \right| \nonumber \\
& \leq \xlip (\psi_h) \Delta_{B_h} (V_h,V) +  \int \left| \psi_h (y,S) - \psi(y,S) \right| \, dV . \label{eqProofLemma4.4_1}
\end{align}
where $B_h$ contains $\supp \psi_h$. 
In the case of \eqref{eqLemma4.4_1}, we have
\[
\psi_h (y,S) = S \rho \left( \frac{y-z_h}{\e_h} \right), \quad \xlip(\psi_h) = \frac{1}{\e_h}  \| \rho \|_{\xW^{1,\infty}}, \quad \supp \psi_h \subset B_{\e_h + |x-z_h|}(x),
\]
 therefore we can take $B_h = B_{\e_h + |x-z_h|}(x)$ and $\displaystyle \psi (y,S) = S \rho \left( \frac{y-x}{\e_h} \right)$, so that plugging
\[
\int \left| \psi_h (y,S) - \psi(y,S) \right| \, dV \leq \frac{1}{\e_h}\xlip(\rho) \int_{B_{\e_h + |x-z_h|}(x)} |x-z_h| \, dV \leq \xlip(\rho) \frac{|x-z_h|}{\e_h} \V \left( B_{\e_h + |x-z_h|}(x) \right)
\]
into \eqref{eqProofLemma4.4_1} gives \eqref{eqLemma4.4_1}. In the case of \eqref{eqLemma4.4_2} we have
\[
\psi_h (y,S) = \frac{1}{\e_h} S_{jk} \nabla^S \rho \left( \frac{y-z_h}{\e_h} \right), \quad \xlip(\psi_h) = \frac{1}{\e_h^2}  \| \rho \|_{\xW^{2,\infty}}, \quad  \supp \psi_h \subset B_{\e_h + |x-z_h|}(x),
\] therefore we can still take $B_h = B_{\e_h + |x-z_h|}(x)$ and set $\displaystyle \psi (y,S) = \frac{1}{\e_h} S_{jk} \nabla^S \rho \left( \frac{y-x}{\e_h} \right)$, so that plugging 
\begin{align*}
\int \left| \psi_h (y,S) - \psi(y,S) \right| \, dV & \leq \frac{1}{\e_h^2}\xlip(\nabla \rho) \int_{B_{\e_h + |x-z_h|}(x)} |x-z_h| \, dV \\
& \leq \xlip(\nabla \rho) \frac{|x-z_h|}{\e_h^2} \V \left( B_{\e_h + |x-z_h|}(x) \right)
\end{align*}
into \eqref{eqProofLemma4.4_1} gives \eqref{eqLemma4.4_2}.
\end{proof}

The convergence estimate \eqref{eqCVMatrix} proved in  Theorem~\ref{theoConvergence1}, and obtained under quite general assumptions, shows an asymptotic rate proportional to $\frac{d_h}{\epsilon_h^2}$. On the other hand, we can obtain a better convergence rate under more restrictive assumptions, as shown in Theorem~\ref{theoConvergence2} below. The idea is that, whenever $M$ is a smooth $d$--submanifold of $\R^n$ and $V=v(M,1)$, the mean curvature vector is orthogonal to the tangent plane, therefore we have 
\begin{align}
\int S_{jk} & \, \nabla^S \rho \left( \frac{y-x}{\epsilon} \right) \, dV(y,S)  = \int_{y \in M} P(y)_{jk}\, \nabla^{P(y)} \rho \left( \frac{y-x}{\epsilon} \right) \, d \cH^d(y) \nonumber \\
& = \int_{y \in M} \underbrace{\left[ P(y)_{jk} - P(x)_{jk} \right] \nabla^{P(y)} \rho \left( \frac{y-x}{\epsilon} \right)}_{= \phi(y)} \, d \cH^d(y) + P(x)_{jk} \int_{y \in M} \nabla^{P(y)} \rho \left( \frac{y-x}{\epsilon} \right) \, d \cH^d(y) \label{eqOrthogDecomposition}
\end{align}
In \eqref{eqOrthogDecomposition}, as soon as the tangent plane is Lipschitz, we will gain one factor of order $\epsilon$ in the first term when computing the Lipschitz constant of $\phi$, which leads to the same gain of order when estimating the pointwise error $\displaystyle \left| \beta^{V_h,\e_h} (z_h) - \beta^{V,\e_h} (x) \right|$, as seen in the proof of Theorem~\ref{theoConvergence1}. Concerning the second term, we know from Theorem $4.8$ in \cite{BuetLeonardiMasnou} that we gain an order $\epsilon$ by projecting the tangential gradient to the orthogonal space at $x$. When defining a modified version of $\beta^V$
in accordance with \eqref{eqOrthogDecomposition}, we recover the $\beta^{V, \perp}$ introduced in Definition~\ref{dfnBetaAijkPerp}. We next propose an approximate version of $\beta^{V, \perp}$:

\begin{dfn}[Approximate orthogonal weak second fundamental form, $\epsilon$--WSFF$^\perp$] \label{dfnBetaAijkPerpReg}
Let $\Omega \subset \R^n$ be an open set and let $V = \V \otimes \nu_x$ be a $d$--varifold in $\Omega$. For $x \in \Omega$, we define
\begin{equation}
\beta_{ijk}^{V, \e, \perp} (x) = \beta_{ijk}^{V,\e} (x) - c_{jk}^V (x) \left( c^V(x) \sum_q \beta_{q \cdot q}^{V,\e} (x) \right)_i.
\end{equation}
We call approximate orthogonal weak second fundamental form (referred to as $\epsilon$--WSFF$^\perp$) the tensor $A^{V, \e, \perp} (x)=\{A_{ijk}^{V, \e, \perp} (x)\}$ where each $A_{ijk}^{V, \e, \perp} (x)$ is the solution to the linear system \eqref{eq:mainSystem2} with $b_{ijk} = \beta_{ijk}^{V, \perp, \e} (x)$ and $c_{jk} = c_{jk}^V(x)$. With a slight abuse of terminology, we will also call approximate orthogonal weak second fundamental form the tensor $\{B_{ij}^{k,\e, \perp}\}$ defined by $\displaystyle B_{ij}^{k,\e, \perp} = \frac{1}{2} ( A_{ijk}^{V,\e, \perp} + A_{jik}^{V,\e, \perp} - A_{kij}^{V,\e, \perp} )$  (see~\eqref{eq:explicitExpressionB}).
\end{dfn}

Notice that by \eqref{aijkexplicit} and following the computations in the proof of Proposition~\ref{propStructuralPropertiesPerp}, we get that
\begin{equation} \label{eqAijkPerpExplicit}
A_{ijk}^{V, \perp, \e}  = \beta_{ijk}^{V,\perp, \e} - c^V_{jk} \left( (I + c^V)^{-1} \sum_{q=1}^n \beta_{q\cdot q}^{V, \perp, \e} \right)_i 
 = \beta_{ijk}^{V, \e} - c^V_{jk} \sum_{q=1}^n \beta_{qiq}^{V, \e} \, .
\end{equation}

\begin{remk}
Notice that we use $c^V(x)$ rather than the regularized expression $c^{V, \e}(x)$ in Definition~\ref{dfnBetaAijkPerpReg}. If we used $c^{V, \e}(x)$, we would directly recover the convergence of Theorem~\ref{theoConvergence1}. Yet, under the assumptions of Theorem~\ref{theoConvergence2}, the regularization of $c^V$ is not necessary anymore and that is why we drop it in Definition~\ref{dfnBetaAijkPerpReg}.
\end{remk}

\begin{theo} \label{theoConvergence2}
Let $\Omega \subset \R^n$ be an open set, $M \subset \Omega$ be a $d$--dimensional submanifold of class $\xC^2$ without boundary, and let $V = v(M,1)$. Let $P$ be a $\xC^1$ extension of the tangent map $T_y M$ on a tubular neighborhood of $M$. Let $(V_h)_h$ be a sequence of $d$--varifolds in $\Omega$. Choose $x \in M$ and a sequence $(z_h)_h \subset \Omega$ converging to $x$, such that $z_h \in \supp \| V_h\|$. Assume that there exist positive, decreasing and infinitesimal sequences $(\eta_h)_h, \,(d_{1,h})_h, \, (d_{2,h})_h, \, (\e_h)_h $, such that for any ball $B\subset\Omega$ centered in $\supp \V$ and contained in a neighborhood of $x$, one has
\begin{equation} \label{eq_thm_pointwise_cvSmooth_hyp2}
\Delta_B(\V,\|V_h\|) \leq d_{1,h} \min \left( \V(B^{\eta_h}) , \|V_h\|(B^{\eta_h}) \right) \: ,
\end{equation}
and, recalling the decomposition $V_h = \| V_h \| \otimes \nu_x^h$,
\begin{equation} \label{eq_thm_pointwise_cvSmooth_hyp3}
\sup_{ \{y \in B_{\e_h + |x-z_h|} (x) \cap \supp  \| V_h \| \} } \int_{S \in \G} \| P(y) - S \| \, d \nu_y^h (S) \leq d_{2,h} \: .
\end{equation}
Then, there exists $C >0$ such that for $h$ large enough,
\begin{equation}\label{eq:betaAperp}
\max\left\{\left| \beta_{ijk}^{V_h,\e_h,\perp} (z_h) - \beta_{ijk}^{V,\e_h,\perp}(x) \right|, \left| A_{ijk}^{V_h,\e_h,\perp} (z_h) - A_{ijk}^{V,\e_h,\perp}(x) \right|\right\} \leq C \frac{d_{1,h} + d_{2,h} + |x-z_h|}{\e_h} 
\end{equation}
for all $i,j,k$. 
Moreover, if we also assume that $d_{1,h}+d_{2,h}+\eta_{h}+|x-z_{h}| = o(\e_{h})$ as $h\to\infty$, then
\begin{equation*}
\beta_{ijk}^{V_h,\e_h,\perp} (z_h) \xrightarrow[h \to \infty]{} \beta_{ijk}^{V}(x) \,.
\end{equation*}
\end{theo}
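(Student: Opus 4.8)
The plan is to follow the scheme of the proof of Theorem~\ref{theoConvergence1} (equivalently, of \cite[Theorem~4.8]{BuetLeonardiMasnou}), replacing the elementary bounds \eqref{eqLemma4.4_1}--\eqref{eqLemma4.4_2} by sharper ones extracted from the orthogonal splitting \eqref{eqOrthogDecomposition}. I would use throughout that $c^V(x)=P(x)$ for $x\in M$ (since $V=v(M,1)$), that $P$ is of class $\xC^1$ on a tubular neighborhood of $M$, so that $\|P(y)-P(y')\|\le L\,|y-y'|$ with $L$ depending only on $M$, and that, exactly as in \cite{BuetLeonardiMasnou}, for $h$ large the masses $\V(B_r(x))$ and $\|V_h\|(B_r(z_h))$ are comparable to $\omega_d r^d$ for $r$ of order $\e_h$, while the denominators $\V\ast\xi_{\e_h}(x)$ and $\|V_h\|\ast\xi_{\e_h}(z_h)$ are comparable to $\e_h^{d-n}$.

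First I would establish a \emph{Lipschitz-improved} representation of the numerators. Combining \eqref{eqOrthogDecomposition} with the orthogonal projection built into Definition~\ref{dfnBetaAijkPerpReg}, and using $c^V(x)=P(x)$, \eqref{eqBetaHeps} and the representation of $\delta_{ijk}V\ast\rho_\e$, one checks that $\beta^{V,\e,\perp}_{ijk}(x)$ equals $-\frac{C_\xi}{C_\rho}\big(\V\ast\xi_\e(x)\big)^{-1}$ times
\[
\int_M\big(P(y)-P(x)\big)_{jk}\,\nabla^{P(y)}\rho_\e(y-x)\cdot e_i\,d\cH^d(y)\;+\;P(x)_{jk}\int_M\big((I-P(x))\,\nabla^{P(y)}\rho_\e(y-x)\big)\cdot e_i\,d\cH^d(y)\,.
\]
Since $(I-P(x))\nabla^{P(y)}\rho_\e(y-x)=(I-P(x))\big(P(y)-P(x)\big)\nabla\rho_\e(y-x)$, in both integrals the integrand is a product of a factor bounded by $L|y-x|$ with Lipschitz constant $O(1)$ in $y$, times $\nabla\rho_\e(y-x)$, whose sup-norm is $O(\e^{-n-1})$ and whose Lipschitz constant in $y$ is $O(\e^{-n-2})$; by the product rule each integrand is then $O(\e^{-n})$ with Lipschitz constant $O(\e^{-n-1})$ in $y$, i.e. one power of $\e$ better than the bare kernel used in Theorem~\ref{theoConvergence1}. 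I would then write the numerator of $\beta^{V_h,\e_h,\perp}(z_h)$ in the same form after two substitutions: replacing the Grassmannian variable $S$ by $P(y)$ inside $\int\cdot\,dV_h=\int\!\!\int\cdot\,d\nu^h_y\,d\|V_h\|$, and replacing $c^{V_h}(z_h)$ by $P(z_h)$. The first substitution costs at most the $S$-Lipschitz constant of the integrand, which is $O(\e_h^{-n-1})$, times $\sup_y\int\|S-P(y)\|\,d\nu^h_y(S)\le d_{2,h}$ by \eqref{eq_thm_pointwise_cvSmooth_hyp3}; the second costs $\|c^{V_h}(z_h)-P(z_h)\|\,\big|H^{V_h,\e_h}(z_h)\big|\le d_{2,h}\cdot O(\e_h^{-1})$, using again \eqref{eq_thm_pointwise_cvSmooth_hyp3} (with $y=z_h\in\supp\|V_h\|$) together with the density bounds. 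Divided by $\|V_h\|\ast\xi_{\e_h}(z_h)$, of order $\e_h^{d-n}$, both errors are $O(d_{2,h}/\e_h)$.

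Next I would compare the two reduced numerators, and separately the two denominators, exactly as in the proof of Theorem~\ref{theoConvergence1}: feeding the test functions above, which have Lipschitz constant $O(\e_h^{-n-1})$ in $y$ and supports contained in $B_{\e_h+|x-z_h|}(x)$, into the general inequality \eqref{eqProofLemma4.4_1} and using the flat-distance hypothesis \eqref{eq_thm_pointwise_cvSmooth_hyp2}, one obtains an error of size
\[
O\Big(\e_h^{-n-1}\big(\Delta_{B_{\e_h+|x-z_h|}(x)}(\V,\|V_h\|)+|x-z_h|\,\V(B_{\e_h+|x-z_h|}(x))\big)\Big)=O\big((d_{1,h}+|x-z_h|)\,\e_h^{d-n-1}\big)\,,
\]
and the auxiliary changes of centre ($z_h\to x$) and of projector ($P(z_h)\to P(x)$, admissible since $P$ is $L$-Lipschitz) contribute at the same order. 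Dividing by $\V\ast\xi_{\e_h}(x)$, of order $\e_h^{d-n}$, and treating the denominator mismatch as in \eqref{eqLemma4.4_1} (with $\xi$ in place of $\rho$) turns everything into $O\big((d_{1,h}+d_{2,h}+|x-z_h|)/\e_h\big)$, which is the asserted bound on $\big|\beta^{V_h,\e_h,\perp}_{ijk}(z_h)-\beta^{V,\e_h,\perp}_{ijk}(x)\big|$ in \eqref{eq:betaAperp}. The bound on $\big|A^{V_h,\e_h,\perp}_{ijk}(z_h)-A^{V,\e_h,\perp}_{ijk}(x)\big|$ then follows from \eqref{eqAijkPerpExplicit}, which writes $A^{\,\cdot,\perp,\e}_{ijk}$ as $\beta^{\,\cdot,\e,\perp}_{ijk}$ plus a term of the shape $-c^{\,\cdot}_{jk}\big((I-c^{\,\cdot})H^{\,\cdot,\e}\big)_i$: for $V_h$ this extra term enjoys the same orthogonal-decomposition gain once $(I-c^{V_h}(z_h))$ is moved inside the integral representing $H^{V_h,\e_h}(z_h)$, while replacing $c^{V_h}(z_h)$ by $P(z_h)$ costs only $O(d_{2,h}/\e_h)$ as above.

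Finally, for the limit statement I would assume $d_{1,h}+d_{2,h}+\eta_h+|x-z_h|=o(\e_h)$, so that the right-hand side of \eqref{eq:betaAperp} is infinitesimal and $\beta^{V_h,\e_h,\perp}_{ijk}(z_h)-\beta^{V,\e_h,\perp}_{ijk}(x)\to0$; since $V=v(M,1)$ is a rectifiable (indeed integral) varifold with locally bounded variations, Proposition~\ref{prop_pointwise_convergence} applied to $\beta^{V,\e}$ and to $c^V$ (which is independent of $\e$ in the orthogonal definition) gives $\beta^{V,\e_h,\perp}_{ijk}(x)\to\beta^{V,\perp}_{ijk}(x)$, and Proposition~\ref{propBetaPerp} identifies $\beta^{V,\perp}_{ijk}(x)=\beta^{V}_{ijk}(x)$ for $\cH^d$-almost every $x$; chaining the three limits yields $\beta^{V_h,\e_h,\perp}_{ijk}(z_h)\to\beta^{V}_{ijk}(x)$. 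The hard part is not this last step but the bookkeeping in the previous two: the whole improvement rests on organising the numerator of $\beta^{V_h,\e_h,\perp}$ so that \emph{every} occurrence of the rough kernel $\nabla\rho_{\e_h}$ is paired with a factor $P(\cdot)-P(\cdot)$ of size $O(\e_h)$ \emph{before} any Lipschitz estimate is used; otherwise one falls back to the $\e_h^{-2}$ rate of \eqref{eqCVMatrix}. This is precisely the role of the orthogonal projection in \eqref{eqOrthogDecomposition} (and of moving $I-c^{V_h}$ inside the integral), and it explains why $\beta^{V,\e,\perp}$ is built from the exact $c^V$ rather than from its regularisation.
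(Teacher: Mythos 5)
Your proposal is correct and follows essentially the same route as the paper's proof: the same orthogonal decomposition of $\beta^{V,\e,\perp}$ (the paper's \eqref{eqOrthogBeta}), the same one-power-of-$\e$ gain from pairing every occurrence of $\nabla\rho_{\e_h}$ with a factor $P(\cdot)-P(\cdot)$, the same use of \eqref{eq_thm_pointwise_cvSmooth_hyp3} to trade $S$ and $c^{V_h}(z_h)$ for $P(y)$ and $P(z_h)$ at cost $O(d_{2,h}/\e_h)$, the same flat-distance estimate via \eqref{eqProofLemma4.4_1}, and the same chaining through Propositions~\ref{prop_pointwise_convergence} and~\ref{propBetaPerp} for the final limit. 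The only cosmetic difference is that you re-derive the gain for the projected mean-curvature term from the identity $(I-P(x))\nabla^{P(y)}\rho=(I-P(x))(P(y)-P(x))\nabla\rho$, where the paper simply cites Theorem~4.8 of \cite{BuetLeonardiMasnou} for that piece.
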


\begin{proof}
First of all, note that as for the proof of Theorem~\ref{theoConvergence1}, we only need to bound $| c^{V_h}(z_h) - c^{V}(x) |$ and $\left| \beta_{ijk}^{V_h,\e_h,\perp}(z_h) - \beta_{ijk}^{V,\e_h,\perp}(x) \right|$ from above by the right-hand side of \eqref{eq:betaAperp}, and the upper bound for $|c^{V_h}(z_h) - c^{V}(x)|$ is a direct consequence of \eqref{eq_thm_pointwise_cvSmooth_hyp3}. Let us rewrite explicitly $\beta_{ijk}^{V, \e, \perp}$:
\begin{multline} \label{eqOrthogBeta}
\beta_{ijk}^{V,\e, \perp} (x) = -\frac{C_\xi}{C_\rho} \frac{1}{\V \ast \xi_\epsilon(x)}  \int \Big( S_{jk} - \int_{P \in \G} P_{jk} \, d \nu_x(P) \Big) \left( \nabla^S \rho_\e(y-x) \right)_i  \, dV(y,S) \\
 -  \int_{P \in \G} P_{jk} \, d \nu_x(P) \underbrace{ \frac{C_\xi}{C_\rho} \frac{1}{\V \ast \xi_\epsilon(x)}  \int_{P \in \G} \int  \left( P^\perp\nabla^S \rho_\e(y-x) \right)_i \, dV(y,S) \, d \nu_x(P) }_{=:-H_i^{V,\e,\perp}(x)}  \: .
\end{multline}
We know from Theorem $4.8$ in \cite{BuetLeonardiMasnou} that for $h$ large enough
\begin{multline} \label{eqEstimateHperp}
a_h := \left| \int_{P} \int  \frac{ \left( P^\perp\nabla^S \rho_{\e_h}(y-x) \right)_i}{\V \ast \xi_{\e_h}(x)} \, dV(y,S) \, d \nu_x(P) \hspace{-2pt} - \hspace{-2pt} \int_{P} \int  \frac{ \left( P^\perp\nabla^S \rho_{\e_h}(y-z_h) \right)_i }{\|V_h\| \ast \xi_{\e_h} (z_h)} \, dV_h (y,S) \, d \nu_{z_h}(P) \right| \\ \leq  C \frac{d_{1,h} + d_{2,h} + |x-z_h|}{\e_h} \: .
\end{multline}

\noindent
In the following we fix $j,k$ and prove that
\begin{multline}\label{eq:stimastep2}
b_h:=\frac{1}{\e_h}\left| \int \left[ S_{jk} - \int_{P \in \G} P_{jk} \, d \nu_{z_h}(P) \right] \nabla^S \rho \left( \frac{y-z_h}{\e_h} \right) \, dV_h (y,S) \right. \\
\left. - \int \left[ S_{jk} - \int_{P \in \G} P_{jk} \, d \nu_x(P) \right] \nabla^S \rho \left( \frac{y-x}{\e_h} \right) \, dV(y,S) \right|\\
 \leq C \frac{d_{1,h}+d_{2,h}+|x-z_h|}{\e_h} \| V_h \| \left( B_{\e_h + |x-z_h| + \eta_h}(x) \right).
\end{multline}
where $C$ is a constant depending only on $\xlip(P)$ and $\|\rho\|_{\xW^{2,\infty}}$. To this aim we write
\begin{multline}\label{eq:stimastep2bis}
\frac{1}{\e_h}\left| \int \left[ S_{jk} - \int_{P \in \G} P_{jk} \, d \nu_{z_h}(P) \right] \nabla^S \rho \left( \frac{y-z_h}{\e_h} \right) \, dV_h (y,S) \right. \\
\left. - \int \left[ S_{jk} - \int_{P \in \G} P_{jk} \, d \nu_x(P) \right] \nabla^S \rho \left( \frac{y-x}{\e_h} \right) \, dV(y,S) \right|
\leq \frac{c_h + d_h + e_h}{\e_h}\,,
\end{multline} 
where $c_h$, $d_h$ and $e_h$ are defined - and estimated - hereafter. We start with 
\begin{equation*}
c_h := \left| \int \psi_h (x,y) \, d \|V_h\|(y) - \int \psi_h(x,y) \, d \V(y) \right|\,,
\end{equation*}
where $\displaystyle \psi_h(x,y) = \left( P(y)_{jk} - P(x)_{jk} \right) \nabla^{P(y)} \rho \left( \frac{y-x}{\e_h} \right)$, so that $\psi_h$ is Lipschitz, with $\xlip(\psi_h)$ depending only on $\xlip(P)$ and on $\| \rho \|_{\xW^{2,\infty}}$. Note also that $\supp \psi_h(x,\cdot) \subset B_{\epsilon}(x)$. Therefore we obtain
\begin{equation}
c_h \leq \xlip(\psi_h) \Delta_{B_{\e_h}(x)} \left( \| V_h \| , \V \right) \leq \xlip(\psi_h) d_{1,h} \| V_h \| \left( B_{\e_h + \eta_h} (x) \right).
\end{equation}
Then we have
\[
d_h : = \left| \int \Big(\psi_h(z_h,y) - \psi_h(x,y)\Big) \, dV_h (y,S)  \right| 
 \leq \xlip(\psi_h) |x-z_h| \| V_h \|\left( B_{\e_h + |x-z_h|}(x) \right)\,.
\]
Finally we have
\begin{multline*}
e_h :  = \left| \int \left[ \left( S_{jk} - \int_{P \in \G} P_{jk} \, d \nu_{z_h}(P) \right) \nabla^S \rho \left( \frac{y-z_h}{\e_h} \right) - \right.\right. \\
 \left.\left. \left( P(y)_{jk} - P(z_h)_{jk} \right)  \nabla^{P(y)} \rho \left( \frac{y-z_h}{\e_h} \right) \right] \, dV_h (y,S)  \right| \\
 \leq  \left| \int \left[ \left( S_{jk} - \int_{P \in \G} P_{jk} \, d \nu_{z_h}(P) \right) -  \left( P(y)_{jk} - P(z_h)_{jk} \right) \right] \nabla^S \rho \left( \frac{y-z_h}{\e_h} \right) \, dV_h (y,S)  \right| \\
 + \left| \int   \left( P(y)_{jk} - P(z_h)_{jk} \right) \left( \Pi_S - \Pi_{P(y)} \right) \left[ \nabla \rho \left( \frac{y-z_h}{\e_h} \right) \right] \, dV_h (y,S)  \right| \\
 \leq  4 d_{2,h} \| \rho \|_{\xW^{1,\infty}} \| V_h \| \left( B_{\e_h + |z_h-x|}(x) \right)\,,
\end{multline*}
so that by combining these estimates with \eqref{eq:stimastep2bis} we obtain \eqref{eq:stimastep2}. 

\noindent
Eventually, we know from \eqref{eqOrthogBeta} that
\begin{multline} \label{eqOrthogBetaFin}
\left| \beta_{ijk}^{V_h, \e, \perp} (z_h) - \beta_{ijk}^{V, \e, \perp} (x)  \right| \leq \frac{C_\xi}{C_\rho} \left( a_h + \frac{b_h}{(\e_h)^n \|V_h\| \ast \xi_{\e_h} (z_h)} \right) \\
+ \frac{\left| \beta_{ijk}^{V, \e_h, \perp} (x) - P_{jk}(x) H_i^{V,\e_h,\perp} (x) \right|}{ \|V_h\| \ast \xi_{\e_h} (z_h)} \left| \V \ast \xi_{\e_h} (x) - \| V_h \| \ast \xi_{\e_h} (z_h)  \right| \: .
\end{multline}
The conclusion follows from injecting \eqref{eqEstimateHperp} and \eqref{eq:stimastep2} in \eqref{eqOrthogBetaFin}, combined with Lemma $4.4$ in \cite{BuetLeonardiMasnou} and the fact that $\left| \beta_{ijk}^{V, \e_h, \perp} (x) - P_{jk}(x) H_i^{V,\e_h,\perp} \right| \xrightarrow[h \to \infty]{} \left| \beta_{ijk}^{V, \perp} (x) - P_{jk}(x) H_i^{V,\perp} (x) \right|$ is thus bounded. The last part of the statement follows from \eqref{eq:betaAperp} and Proposition~\ref{propBetaPerp}.
\end{proof}

\section{Implementation and numerical illustrations}
\label{sectionNumerics}

We present in this section a few numerical illustrations of approximate curvatures for point clouds. We recall for more convenience the notion of point cloud varifold (see Definition \ref{def:PCV}). Given a finite set of points $\{ x_l \}_{l=1 \ldots N} \subset \R^n$, of "masses" $\{ m_l \}_{l = 1\ldots N} \subset \R_+$, and $d$--planes $\{ P_l \}_{l = 1 \ldots N} \subset \G$, we associate a $d$--varifold $V$ in $\R^n$ defined as
\begin{equation} \label{eqPCV}
V = \sum_{l=1}^N m_l \delta_{(x_l, P_l)} \: .
\end{equation}
While the variations $\delta_{ijk} V$ of $V$ are generally not Radon measures but distributions of order $1$, it is yet possible to compute explicitly the $\e$--WSFF of $V$ for any $\epsilon>0$, as explained in Section~\ref{Section:RegWSFF}. Explicit formulas for point cloud varifolds are derived in Section~\ref{SectionNumericsWFFPointCloud} below, while the implementation is detailed in Section~\ref{SectionImplementation}. Finally, in Section~\ref{SectionNumericalExp} we perform some numerical experiments on $2$--dimensional point clouds in $\R^3$.

\subsection{Approximate WSFF$^\perp$ for point clouds}
\label{SectionNumericsWFFPointCloud}

Let us give the precise expression of the $\e$--WSFF$^\perp$ in the case of the point cloud varifold $V$ defined in \eqref{eqPCV}.  We take, up to re-normalization, the profile $\rho(t) = \exp \left( - \frac{1}{1 - t^2} \right)$ for $t \in [0,1)$ and then define $\xi$ according to \eqref{eq:NKP}. Let $\e > 0$ and $l_0 \in \{ 1, \ldots, N \}$. We have
\begin{equation*} 
\beta_{ijk}^{V,\e} (x_{l_0}) = \frac{d}{n} \frac{\displaystyle  \sum_{l=1}^N m_l \left( P_l \right)_{jk} \rho^\prime \left( \frac{| x_{l_0} - x_l|}{\e} \right) \frac{P_l (x_{l_0} - x_l)}{|x_{l_0} - x_l|} \cdot e_i}{\displaystyle \sum_{l=1}^N m_l \xi  \left( \frac{| x_{l_0} - x_l|}{\e} \right)} \, .
\end{equation*}
Then, thanks to  $c^V (x_{l_0}) = P_{l_0}$ and \eqref{eqAijkPerpExplicit}, $A_{ijk}^{V, \perp, \e} (x_{l_0})$ is defined as
\begin{equation*}
A_{ijk}^{V, \perp, \e} (x_{l_0}) = \frac{d}{n} \frac{\displaystyle  \sum_{l=1}^N m_l \left( \left( P_l \right)_{jk} - \left( P_{l_0} \right)_{jk} \right) \rho^\prime \left( \frac{| x_{l_0} - x_l|}{\e} \right) \frac{P_l (x_{l_0} - x_l)}{|x_{l_0} - x_l|} \cdot e_i}{\displaystyle \sum_{l=1}^N m_l \xi  \left( \frac{| x_{l_0} - x_l|}{\e} \right)} \, .
\end{equation*}
Lastly, by Definition~\ref{dfnBetaAijkPerpReg}:
\begin{equation} \label{eqExplcitSecondFundamentalFormPCL}
B_{ij}^{k, \perp, \e} (x_{l_0}) =   \frac{\displaystyle \frac{d}{n} \sum_{l=1}^N m_l \rho^\prime \left( \frac{| x_{l_0} - x_l|}{\e} \right) \frac{P_l (x_{l_0} - x_l)}{|x_{l_0} - x_l|} \cdot \frac 12 \left(  \left( P_l - P_{l_0} \right)_{jk}  e_i +   \left( P_l - P_{l_0} \right)_{ik}  e_j -  \left( P_l - P_{l_0} \right)_{ij}  e_k \right) }{\displaystyle \sum_{l=1}^N m_l \xi  \left( \frac{| x_{l_0} - x_l|}{\e} \right)}  \, .
\end{equation}
In order to rewrite this latter tensor in a more standard way, we first project, for fixed $i,j$, the vector $(B_{ij}^k)_{k=1 \ldots n}$ onto the normal space by taking:
\[
(I - P_{l_0}) B_{ij}^{\cdot , \perp, \e} (x_{l_0})
\]
and in the case where $d = n-1$ (codimension $1$), we obtain a scalar value by taking the scalar product with a unit vector $n_{l_0}$ generating the normal space. We obtain the matrix $(B_{ij}^{\perp, \e})_{i,j = 1 \ldots n}$:
\[
B_{ij}^{\perp, \e} (x_{l_0}) = (I - P_{l_0}) B_{ij}^{\cdot , \perp, \e} (x_{l_0}) \, \cdot \, n_{l_0} = B_{ij}^{\cdot , \perp, \e} (x_{l_0}) \, \cdot \, n_{l_0} \, .
\]
In order to recover the principal curvatures, it remains to pass from the extended second fundamental form $B_{ij}^{\perp, \e}$ to the one restricted to the tangent space. It can be done by simply considering the $n\times d$ matrix $Q_{l_0}$ whose columns constitute an orthonormal basis of the tangent space at $x_{l_0}$, and then computing
\[
\overline{B_{ij}}^\e (x_{l_0}) = Q_{l_0}^t B_{ij}^{\perp, \e} (x_{l_0}) Q_{l_0}
\]
The eigenvalues and eigenvectors of $\overline{B_{ij}}^\e (x_{l_0})$ are (approximations of) the principal curvatures and directions at $x_{l_0}$.

Notice that as long as no global orientation of the cloud is given, $n_{l_0}$ is chosen up to a sign, therefore the sign of each eigenvalue cannot be recovered, but their relative sign is known. This is perfectly normal as the eigenvalues depend on a chosen orientation. In dimension $d = 2$, it is however possible to compute the Gaussian curvature for instance since it does not depend on the orientation (because being $\kappa_1$, $\kappa_2$ the principal curvatures, one has  $ \kappa_1\kappa_2 = (-\kappa_1) (-\kappa_2)$).

\subsection{Implementation}
\label{SectionImplementation}

The implementation relies on the computation of $\epsilon$--neighborhoods of points in point clouds (or $k$--neighborhoods when the number of neighboring points is fixed rather than the radius of the neighboring ball). This can be done very efficiently in practice with the C++ library \texttt{Nanoflann}\cite{nanoflann}. Linear algebra computations are done with the \texttt{Eigen} library~\cite{eigen}. We use \texttt{CloudCompare}~\cite{cloudcompare} for point clouds rendering.

The approximation of the curvature tensor requires the computation of the masses $(m_l)_l$ and the approximate tangent planes $(P_l)_l$ from the positions $\{x_l\}_l$. As usual, the planes $(P_l)_l$ can be computed by local weighted regression: given a point $x_i$ in the cloud, and given a parameter $\sigma > 0$ (or a number of points $k_\sigma$), we first compute the barycenter $\bar{x}$ of the points contained in the $\sigma$--neighborhood $B_\sigma(x_i)$ of $x_i$ (or of $k_\sigma$ closest neighbors of $x_i$).
Then, with the notation $x = ( x^{(1)}, \ldots, x^{(n)} )$ for the $n$--components of $x\in \R^n$, we compute the $n\times n$ covariance matrix at $x_i$ defined as
$C =(C_{kl})_{k,l=1,\ldots n}$, where 
\[
C_{kl}= \sum_{j=1}^N \rho \left( \tfrac{|x_j - x_i|}{\sigma}\right) 
 \left( x_j^{(k)} - {\bar x}^{(k)} \right) \left( x_j^{(l)} - {\bar x}^{(l)} \right) \: .
\]
Computing the $d$ eigenvectors associated with the $d$ highest eigenvalues gives a basis of a plane $P_i$ which approximates the tangent plane at $x_i$.

As for the computation of the weights $(m_l)_l$, if the discretization is locally  uniform (at scale $\epsilon$),  then one can simply take all masses equal (to $1$ for instance). Otherwise, we fix a number of points $N_{mass}$ and we compute at $x_i$ the (smallest) radius $r_i$ of the ball $B_{r_i} (x_i)$ containing at least $N_{mass}$ points. Then we define the mass $m_i$ at $x_i$ as $\frac{\omega_d r_i^d}{N_{mass}}$, or simply $r_i^d$ (simplifying the constant in \eqref{eqExplcitSecondFundamentalFormPCL}).

\subsection{Numerical experiments}
\label{SectionNumericalExp}

Figure~\ref{figDragonGauss} shows the approximate Gaussian curvature $\kappa_1 \kappa_2$ of a dragon point cloud with $N = 435\ 545$ points (the point cloud was sampled from a mesh from Stanford 3D scanning repository \url{http://graphics.stanford.edu/data/3Dscanrep/}). The curvature values are represented with colors ranging from blue (negative values) to red (positive values) through white. 

We represent in Figure~\ref{figDragonAbs} the approximation on the same point cloud of the sum of the absolute values of the two principal curvatures $|\kappa_1| + |\kappa_2|$. The values are indicated with colors ranging from blue (low values) to red (high values) through green and yellow.

\begin{figure}[!htbp]
\centering
\includegraphics[width=0.60\textwidth]{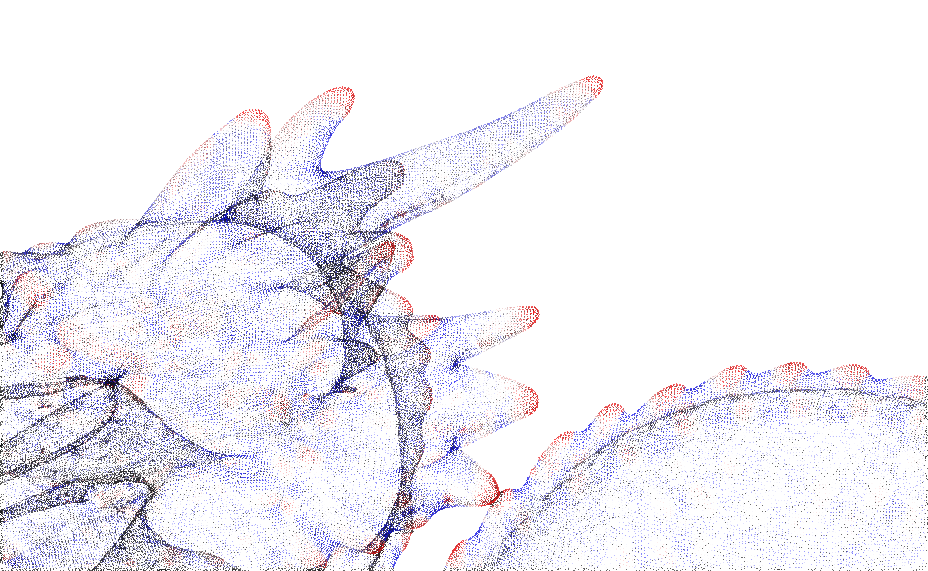}
\includegraphics[width=0.60\textwidth]{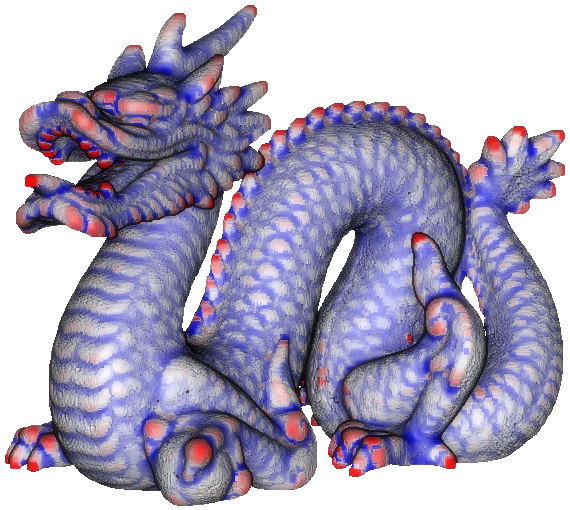}
\caption{Approximate Gaussian curvature $\kappa_1\kappa_2$ of a ``dragon'' point cloud. Top image: detail of the point cloud. Bottom image: the complete point cloud with augmented point size for better rendering. The colors range from blue (negative Gaussian curvature) to red (positive Gaussian curvature) through white. The point cloud has $N = 435\ 545$ points and the approximate Gaussian curvature is computed at each point using $N_{neigh}=40$ nearest points. \label{figDragonGauss}}
\end{figure}

\begin{figure}[!htbp]
\centering
\includegraphics[width=0.60\textwidth]{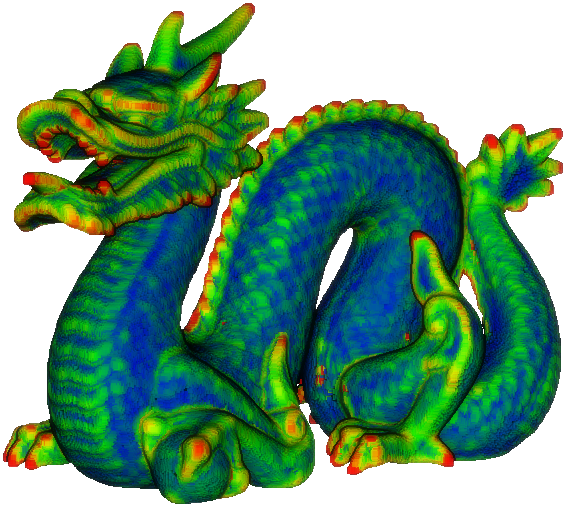}
\caption{Approximation of the sum $|\kappa_1| + |\kappa_2|$  of the absolute values of the principal curvatures of a ``dragon'' point cloud. The colors range from blue (low values) to red (high values) through green and yellow.  The point cloud has $N = 435\ 545$ points and the approximation is computed at each point using $N_{neigh} = 40$ nearest points.\label{figDragonAbs}}
\end{figure}

Figure~\ref{figGenus3Gauss} shows the approximation of various curvature informations (Gaussian curvature, sum of the absolute principal curvatures, and norm of the mean curvature) for a point cloud of $N = 100\,030$ points subsampled on a genus $3$ surface. The number of points used to compute local tangent planes and the $\e$-WSFF$^\perp$ is $N_{neigh} = 40$, corresponding to an average $\e = 0.016$ for a point cloud of diameter approximately $1$.

\begin{figure}[!htbp]
\centering
\subfigure{\includegraphics[width=0.30\textwidth]{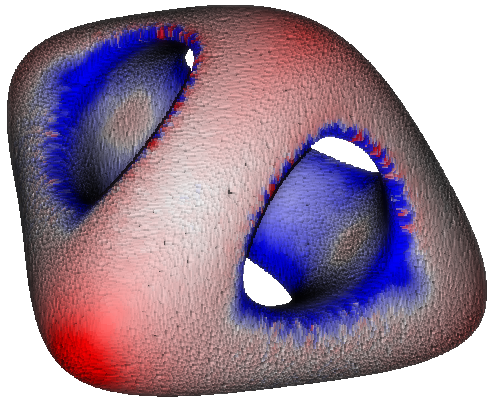}}
\subfigure{\includegraphics[width=0.30\textwidth]{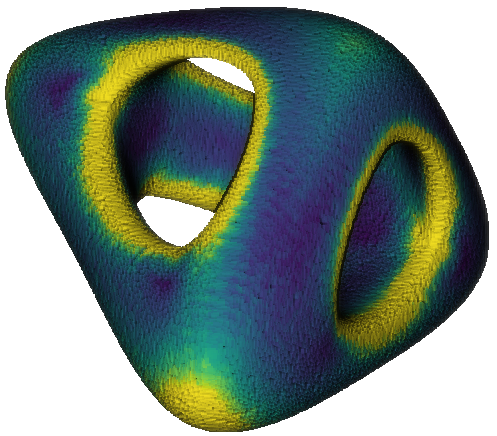}}
\subfigure{\includegraphics[width=0.31\textwidth]{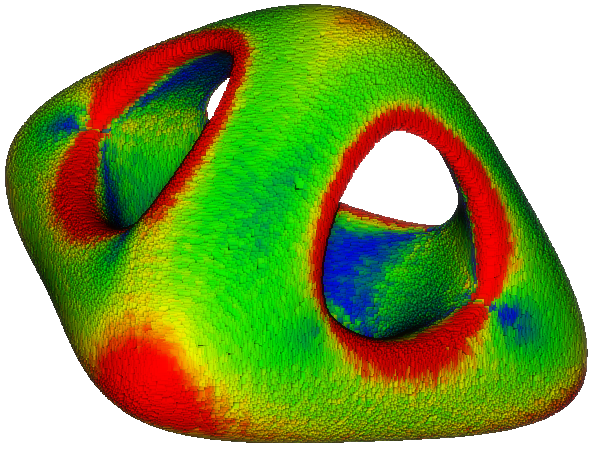}}
\caption{Approximate curvatures of a 3-torus point cloud. From left to right: approximations of the Gaussian curvature $\kappa_1 \kappa_2$ (colors range from blue for negative values to red for positive values, through white), the sum of the absolute principal curvatures $|\kappa_1|+|\kappa_2|$ (colors range from dark blue for low values to yellow for high values, through green), and the norm $|\kappa_1+\kappa_2|$ of the mean curvature vector (colors range from blue for low values to red for high values through green and yellow).\label{figGenus3Gauss}}
\end{figure}

Figure~\ref{pyramid} shows the approximation on a Farman Institute 3D point cloud, see~\cite{farman}, of the sum $|\kappa_1| + |\kappa_2|$ of the absolute principal curvatures at every point of the cloud. This example illustrates that curvature information can be useful for the extraction of features from a dataset.

\begin{figure}[!htbp]
\begin{center}
\includegraphics[width=0.44\textwidth]{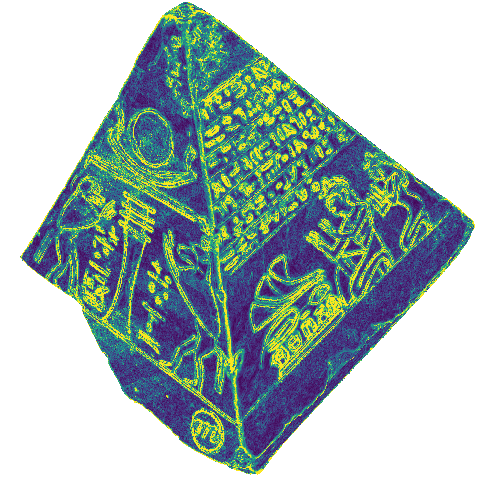}
\caption{Sum of approximate absolute principal curvatures $|\kappa_1| + |\kappa_2|$ computed at each point of a $3D$ point cloud from Farman Institute 3D Point Sets~\cite{farman} (colors range from blue for low values of the sum to yellow and red for high values through green). The meaningful information on the pyramid is clearly associated with high curvature values.\label{pyramid}}
\end{center}
\end{figure}


We conclude with a test performed on a cube discretized with $N = 21\,602$ points (i.e. around $60$ points per edge). The number of points used to compute local tangent planes and the $\e$-WSFF$^\perp$ was $N_{neigh} = 40$, corresponding to an average $\e = 0.060$ for a cube of side--length $1$. The approximate Gaussian curvature is shown in Figure~\ref{figCubeGauss} and the sum of the approximate absolute values of the two principal curvatures $|\kappa_1| + |\kappa_2|$ is represented in Figure~\ref{figCubeAbs}. Figures~\ref{figCubeGaussNoise} and \ref{figCubeAbsNoise} show the results of the same test performed after modifying the positions of all points with an additive centered Gaussian noise of standard deviation $0.01$. A standard deviation equal to $0.05$ was used for Figures~\ref{figCubeGaussNoise2} and \ref{figCubeAbsNoise2}. In these latter experiments, the number of points used to compute local tangent planes and the $\e$-WSFF$^\perp$ was $N_{neigh} = 150$ which corresponds to an average $\e = 0.134$.

\begin{figure}[h]
\subfigure[\label{figCubeGauss}]{\includegraphics[width=0.26\textwidth]{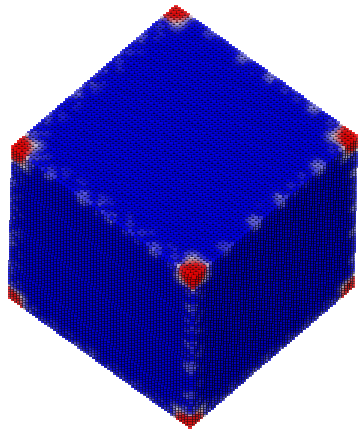}}
\subfigure[\label{figCubeGaussNoise}]{\raisebox{-0.1cm}{\includegraphics[width=0.27\textwidth]{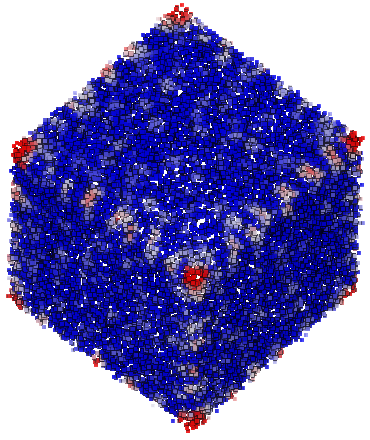}}}
\subfigure[\label{figCubeGaussNoise2}]{\raisebox{-0.4cm}{\includegraphics[width=0.32\textwidth]{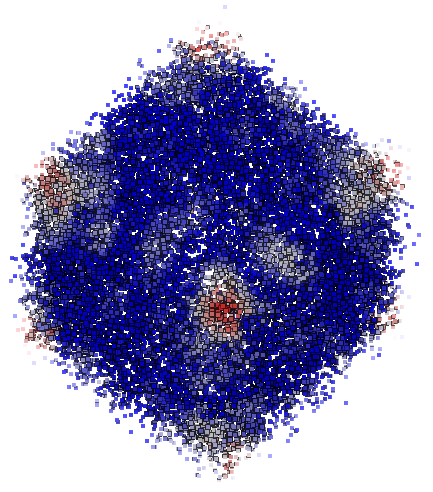}}}
\subfigure[\label{figCubeAbs}]{\includegraphics[width=0.26\textwidth]{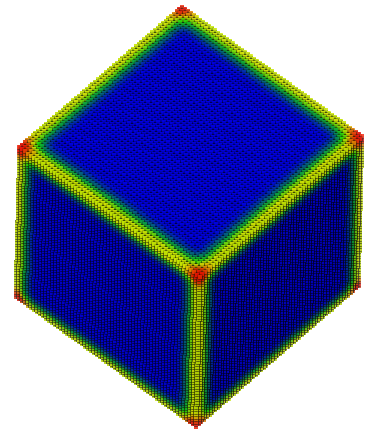}}
\subfigure[\label{figCubeAbsNoise}]{\raisebox{-0.1cm}{\includegraphics[width=0.27\textwidth]{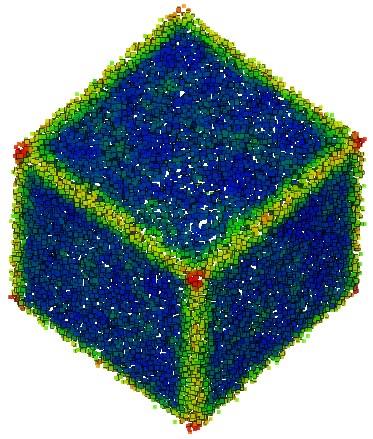}}}
\subfigure[\label{figCubeAbsNoise2}]{\raisebox{-0.4cm}{\includegraphics[width=0.32\textwidth]{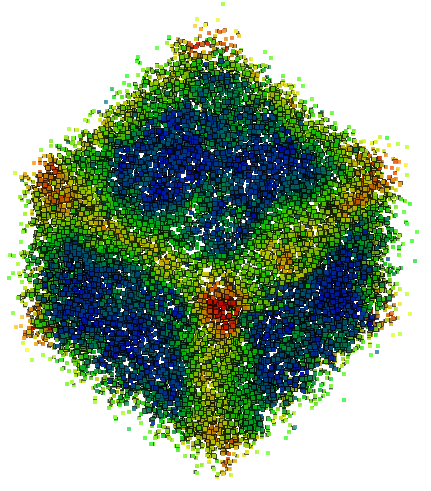}}}
\caption{Testing the stability with respect to noise of the curvatures approximation for a cube point cloud and various levels of Gaussian noise applied to the positions. The level of noise increases from left to right: no noise, additive centered Gaussian noise of standard deviation $0.01$, additive centered Gaussian noise of standard deviation $0.05$. {\bf First row}: $(a)$, $(b)$, $(c)$, Approximate Gaussian curvatures represented with colors ranging from blue (negative values) to red (positive values) through white. {\bf Second row}: $(d)$, $(e)$, $(f)$, Sum of the approximate absolute values of the two principal curvatures $|\kappa_1| + |\kappa_2|$ represented by colors ranging from blue (low values) to red (high values) through green and yellow.}
\end{figure}

\section*{Acknowledgements}
The authors are grateful to Carlo Mantegazza, Ulrich Menne, and Stefan Luckhaus for stimulating discussions about varifolds. They also thank Adriano Pisante for pointing out reference~\cite{Spohn1993}.

\bibliographystyle{alpha}
\newcommand{\etalchar}[1]{$^{#1}$}


\end{document}